\documentclass{article}

% Packages utiles:
% Packages généraux
\usepackage[utf8]{inputenc}
\usepackage[english]{babel}
\usepackage[left=3cm,right=3cm,top=3cm,bottom=3cm]{geometry}
\usepackage[colorlinks=true, allcolors=blue]{hyperref}
% Packages maths
\usepackage{amsmath}
\usepackage{amsfonts}
\usepackage{amsthm}
\usepackage{subcaption}
\usepackage{graphicx}
\usepackage[ruled,vlined,french]{algorithm2e}
\usepackage{tikz}
\usepackage{float} 
\usepackage[justification=centering]{caption}
\usepackage{caption}
\usepackage{enumitem}

\usepackage{thmtools}
\usepackage{thm-restate}

\usepackage[sorting=none]{biblatex}
\addbibresource{biblio.bib}

\usepackage{a4wide}
\usepackage{authblk}
% Commandes

% Fonctions non-mathématiques

% Environnements
\newtheorem{defn}{Definition}

\newtheorem{lemme}[defn]{Lemma}
\newtheorem*{lemme*}{Lemma}
\newtheorem{cor}[defn]{Corollary}

\newtheorem{theorem}[defn]{Theorem}

\newtheorem*{proposition*}{Proposition}
\newtheorem{conj}[defn]{Conjecture}
\newtheorem*{conj*}{Conjecture}
\newtheorem{question}[defn]{Question}

% Raccourcis

% Fonctions
 % Valeur absolue
 % Partie entière
 % Fonction indicatrice 
\newcommand{\ter}{ter} % Valeur absolue

% Raccourcis spécifiques 

\title{Metric dimension on sparse graphs and its applications to zero forcing sets\footnote{An extended abstract of this work has been published in EuroComb 2021 \cite{eurocom2021}.} \footnote{The first three authors of this work are supported by ANR project GrR (ANR-18-CE40-0032). 

The Ignacio M. Pelayo work is supported by project PGC2018-095471-B-100.}}
\author[1]{Nicolas Bousquet}
\author[1]{Quentin Deschamps}
\author[1]{Aline Parreau}
\author[2]{Ignacio M. Pelayo}

\affil[1]{Universit\'e de Lyon, Universit\'e Lyon 1, LIRIS UMR CNRS 5205, F-69621, Lyon, France}
\affil[2]{Universitat Polit\`ecnica de Catalunya, Barcelona, Spain}

\begin{document}

\maketitle

\begin{abstract}
  The {\em metric dimension $\dim(G)$} of a graph $G$ is the minimum cardinality of a subset $S$ of vertices of $G$ such that each vertex of $G$ is uniquely determined by its distances to $S$. 
  It is well-known that the metric dimension of a graph can be drastically increased by the modification of a single edge.
  Our main result consists in proving that the increase of the metric dimension of an edge addition can be amortized in the sense that if the graph consists of a spanning tree $T$ plus $c$ edges, then the metric dimension of $G$ is at most the metric dimension of $T$ plus $6c$.

  We then use this result to prove a weakening of a conjecture of Eroh et al.
  The {\em zero forcing number} $Z(G)$ of $G$ is the minimum cardinality of a subset $S$ of {\em black} vertices (whereas the other vertices are colored white) of $G$ such that all the vertices will turned black after applying finitely many times the following rule: \emph{a white vertex is turned black if it is the only white neighbor of a black vertex.}
  
  Eroh et al. conjectured that, for any graph $G$, $\dim(G)\leq Z(G) + c(G)$, where $c(G)$ is the number of edges that have to be removed from $G$ to get a forest. They proved the conjecture is true for trees and unicyclic graphs.
  We prove a weaker version of the conjecture: $\dim(G)\leq Z(G)+6c(G)$ holds for any graph. We also prove that the conjecture is true for graphs with edge disjoint cycles, widely generalizing the unicyclic result of Eroh et al.
\end{abstract}

\section{Introduction}
\setcounter{page}{1}

A \emph{resolving set} of a graph $G$ is a subset of vertices of $G$ such that any vertex in the graph is identified by its distances to the vertices of the resolving set.
In the example of Figure~\ref{figexdim} the set $ \{u,v\}$ is a resolving set of the graph because all the vertices have a different \emph{distance vector} to $\{u,v\}$ so the knowledge of the distances to $u$ and $v$ uniquely identifies a vertex. This notion has been introduced in 1975 by Slater \cite{slater1975trees} for trees and by Harary and Melter \cite{harray1975} for graphs. Determining the minimum size of a resolving set, called \emph{metric dimension} and denoted by $\dim(G)$, is an NP-hard problem~\cite{garey1979} even restricted to planar graphs~\cite{Diaz2012}. Applications of metric dimension go from piloting a sonar~\cite{harray1975} to the navigation of a robot in an Euclidean space~\cite{KHULLER1996}.

\begin{figure}[ht]
    \centering
    \includegraphics[scale=1.7]{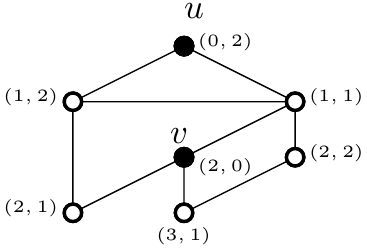}
    \caption{The black vertices form a resolving set. For each vertex $x$ the vector next to $x$, $(d(u,x),d(v,x))$ is unique.}
    \label{figexdim}
\end{figure}

One of the main issues to compute the metric dimension of a graph comes from the fact that it is unstable when the graph is modified. When a vertex is added, the metric dimension can be drastically modified. Indeed, while a path admits a constant size resolving set, a path plus a universal vertex only admits resolving sets of linear size (in the number of vertices). However, in this example even if only one vertex was added, a linear number of edges were also added which had permitted to put all the vertices at distance $2$. One can then wonder if the situation is better when only one edge is modified in the graph. Unfortunately again, the metric dimension of a graph can be drastically modified by the modification of a single edge. In Figure \ref{fig_delete_edge} (first proposed in~\cite{eroh2015effect}), the metric dimension of the left graph is $2k$ where $k$ is the number of layers in the graph while the right graph has metric dimension $k+1$. So the addition of one edge can modify the metric dimension by $\Omega(n)$.

\begin{figure}[ht]
    \centering
    \includegraphics[scale=0.7]{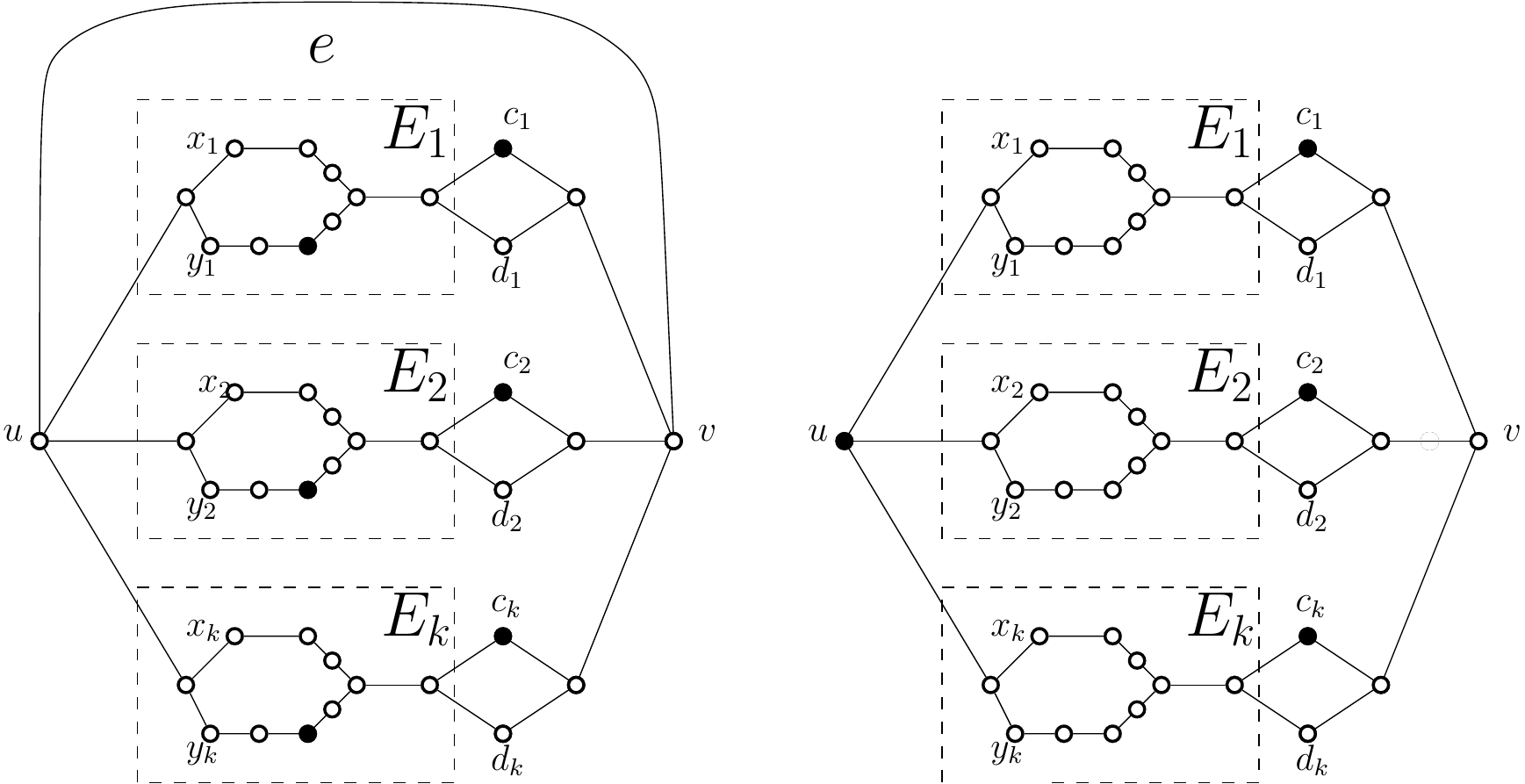}
    \caption{\raggedright{On the left graph, all the pairs $(c_i,d_i)$ are twins (both vertices have the same neighborhood) so for all $i \leq k$ any resolving set should contain $c_i$ or $d_i$. Then, for $i \leq k$, the pair $(x_i,y_i)$ can only be resolved by a vertex of $E_i$ so  any resolving set should contain a vertex of $E_i$ for all $i \leq k$. So any resolving set contains at least $2k$ vertices. The set of black vertices is a resolving set so the dimension is $2k$. On the right graph the pairs $(c_i,d_i)$ are twins so for all $i \leq k$ any resolving set should contain $c_i$ or $d_i$. One can easily check that a set containing only vertices on $\{c_i;d_i, 1 \leq i \leq k\}$ is not resolving so the dimension is higher than $k$. Black vertices form a resolving set so the metric dimension is $k+1$. }}
    
    \label{fig_delete_edge}
\end{figure}

\paragraph{Metric dimension and cycle rank.}
 Eroh et al.~\cite{eroh2017comparison} proved that, if $G$ is a unicyclic graph, then the metric dimension of $G$ is at most the metric dimension of any spanning tree of $G$ plus one. Moreover, all the existing examples where the metric dimension is drastically modified with an edge modification already contain many cycles. One can then wonder if the metric dimension of a graph which does not contain many cycles is close to the metric dimension of any of its spanning trees. The goal of this paper is to answer this question positively. 
  
One can easily remark that in the star $K_{1,n}$, any resolving set contains at least $n-1$ vertices. Indeed otherwise, two degree $1$ vertices are not in the resolving set and then these two vertices cannot be distinguished. This argument can be generalized to any graph as follows: if $r$ pending paths are attached to a vertex $v$, any resolving set contains a vertex in at least $r-1$ of them. Let $L(G)$ be the sum over all the vertices $v$ on which there are attached pending paths, of the number of paths attached to $v$ minus one per vertex $v$.
 Chartrand et al.~\cite{CHARTRAND2000} remarked that, for every connected graph $G$, $\dim(G) \geq L(G)$ with equality for trees. However, this bound has no reason to be closed to the optimal value (a graph with no degree $1$ vertex can have an arbitrarily large metric dimension). A wheel (induced cycle plus a universal vertex), for instance, has no degree $1$ vertex and its metric dimension is linear in $n$ (see Figure \ref{figwheel}).

The \emph{cycle rank} of a connected graph, denoted by $c(G)$, is the number of edges that has to be removed from $G$ to obtain a spanning tree.  We prove that the following holds:
 
 \begin{restatable}{theorem}{mainbisstate}
\label{thm:mainbis}
For any graph $G$, $L(G) \leq \dim(G) \leq L(G) + 6c(G) $.
\end{restatable}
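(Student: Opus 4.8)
The lower bound $L(G)\le\dim(G)$ is exactly the inequality of Chartrand et al.\ \cite{CHARTRAND2000} quoted above, so all the content is in the upper bound, and we may assume $c:=c(G)\ge 1$ (otherwise $G$ is a tree and there is nothing to do). The plan is: fix a well-chosen spanning tree $T$ of $G$, take a minimum resolving set of $T$ in the canonical Slater form, and then \emph{patch} it by inserting a bounded number of vertices around each of the $c$ non-tree edges $e_1=x_1y_1,\dots,e_c=x_cy_c$.

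First I would deal with the spanning tree and with the quantity $L$. One picks $T$ so that every pending path of $G$ is still a pending path of $T$, and so that the subtree $T_X$ of $T$ spanned by $X=\{x_1,y_1,\dots,x_c,y_c\}$ is as simple as possible. Away from $X$ and from the at most $2c-2$ branch vertices of $T_X$, the tree $T$ has the same degrees and the same local structure as $G$, so the only exterior major vertices that $T$ has but $G$ does not are located at these $O(c)$ vertices; this gives $\dim(T)=L(T)\le L(G)+O(c)$ with an explicit small constant. Hence it suffices to exhibit a resolving set of $G$ of size $\dim(T)+O(c)$, the two $O(c)$ contributions summing to at most $6c$.

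For the resolving set itself, let $W$ be a minimum resolving set of $T$ of the usual form (for each exterior major vertex of $T$, the terminal leaves of all but one of its legs; a single endpoint if $T$ is a path). Take $S=W\cup A$, where $A$ consists, for each $i$, of $O(1)$ vertices attached to $e_i$: the endpoints $x_i,y_i$, their neighbours along the fundamental cycle $C_i$ of $e_i$ in $T$, and one or two further vertices of $C_i$ chosen to play the role that two landmarks play on a plain cycle. A generous count keeps $|A|\le 6c$.

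The heart of the argument is showing that $S$ resolves $G$, and this is where I expect the real difficulty. Given $u\ne w$, choose $s\in W$ with $d_T(s,u)\ne d_T(s,w)$. If neither a shortest $s$--$u$ path nor a shortest $s$--$w$ path in $G$ uses a non-tree edge, then $d_G(s,\cdot)$ agrees with $d_T(s,\cdot)$ on $\{u,w\}$ and $s$ already separates the pair. Otherwise some shortest path is shortcut through an $e_i$; one must show that such a shortcut confines the shortcut endpoint ($u$ or $w$) to a controlled region hanging off $e_i$, and then separate the pair using the vertices of $A$ associated with $e_i$ --- using crucially that, since $G$ is $T$ plus only $c$ edges, the distance from any vertex is shortcut through at most the $c$ fundamental cycles, so $O(1)$ landmarks per cycle are enough. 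Pinning down exactly which $O(1)$ vertices near each $e_i$ to add, and checking that they handle every shortcut configuration --- in particular pairs where $u$ and $w$ are both shortcut, possibly through different non-tree edges --- is the main obstacle and is what fixes the value $6$; reducing first to the $2$-connected case, block by block, may help to organise this analysis.
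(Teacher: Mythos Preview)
Your outline is a natural first attempt, but it is not a proof, and the gap is exactly where you say it is: you do not explain which $O(1)$ vertices to add near each non-tree edge, nor why they suffice. The difficulty is real. The fundamental cycles $C_1,\dots,C_c$ can overlap arbitrarily (they span the cycle space but are far from disjoint), so ``a shortest $s$--$u$ path is shortcut through $e_i$'' does not confine $u$ to any region that depends only on $e_i$: a shortest path can use several $e_i$'s in sequence, and which ones it uses depends on $s$ as well as on $u$. Your sentence ``the distance from any vertex is shortcut through at most the $c$ fundamental cycles, so $O(1)$ landmarks per cycle are enough'' is not an argument; it is a restatement of what has to be proved. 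Likewise the bound $L(T)\le L(G)+O(c)$ with a specific constant needs a real proof: a poorly chosen spanning tree can create many new exterior major vertices, and ``away from $X$ the local structure is the same'' is not literally true without further hypotheses on $T$.

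The paper does something structurally different that sidesteps these interactions. Instead of a spanning tree plus $c$ chords, it removes a small \emph{vertex} set $M$ (built from a minimum feedback vertex set together with the branching vertices of the Steiner trees attaching the components to it) so that every component $H$ of $G\setminus M$ is a tree attached to $M$ by at most two edges. For each two-attached component it adds a single ``midpoint'' vertex $\rho_H$ on the path between the two attaching points; the key lemma is that no shortest path from $\rho_H$ to a vertex of $H$ ever leaves $H$, which is what makes $\rho_H$ a genuine shortcut detector. With $M$ and the $\rho_H$'s in the set, any unresolved pair is forced into a single component $H$, and inside $H$ the local (tree) resolving set finishes the job. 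The accounting $|M|\le 2c-2$, $|P|\le 3c-3$, $|\bigcup S_H|\le L(G)+c$ is what produces the constant $6$. If you want to push your per-chord approach through, you will need an analogue of the midpoint lemma that survives overlapping cycles; absent that, the feedback-vertex-set decomposition is the missing idea.
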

 
 Since the value of $L(G)$ cannot decrease by removing edges in $G$ that are not bridges, it implies the following:

  \begin{restatable}{cor}{corST}
\label{cor:ST}
 Let $G$ be a graph and $T$ be any spanning tree of $G$. Then,
 \[ \dim(G) \le \dim(T)+6c(G) \]
 \end{restatable}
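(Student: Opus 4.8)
The plan is to deduce Corollary~\ref{cor:ST} from Theorem~\ref{thm:mainbis} by controlling how $L$ behaves as one deletes the non-tree edges of $G$. Fix a spanning tree $T$ of $G$ and set $F = E(G)\setminus E(T)$, so that $|F| = c(G)$. Every $e \in F$ lies on a cycle of $G$, namely $e$ together with the unique $T$-path between its endpoints, so $e$ is not a bridge of $G$; and this stays true after deleting any subset of $F$, since $T$ remains a spanning subgraph. Hence we may order $F = \{e_1, \dots, e_{c(G)}\}$ and pass from $G$ to $T$ by deleting $e_1, \dots, e_{c(G)}$ one at a time, each step being the removal of a non-bridge edge from the current connected graph.

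The heart of the argument is then the claim that \emph{deleting a non-bridge edge from a connected graph never decreases $L$}; applying it along the sequence above yields $L(G) \le L(T)$. To prove the claim, let $e = xy$ be a non-bridge edge of a connected graph $H$, so that $H - e$ is connected and $\deg_H(x), \deg_H(y) \ge 2$; deleting $e$ lowers the degrees of $x$ and $y$ by one and leaves every other degree unchanged. The key structural remark is that $e$ lies on no pending path of $H$ (the edges of a pending path are bridges), so deleting $e$ destroys no existing pending path and no leaf of $H$; it can only (i) create a new leaf at $x$ or $y$, when its degree was $2$, which spawns a new pending path that is absorbed by the nearest exterior major vertex along the surviving direction, or (ii) make $x$ or $y$ stop being an exterior major vertex, when its degree was $3$; but a degree-$3$ endpoint $v$ of a non-bridge edge satisfies $\ter(v) \le 1$ (otherwise $e$ would separate the two pending paths of $v$ from the rest, hence be a bridge), so $v$ contributed $0$ to $L$ and its unique pending path is merely re-routed to the next exterior major vertex. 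In each situation a short local computation of the per-vertex contributions ($\ter(v) - 1$ if $v$ is an exterior major vertex, $0$ otherwise) over the few affected vertices shows the total change is $\ge 0$; the degenerate cases in which $H$ or $H - e$ is a path or a cycle are handled directly, since then $L$ equals $0$ on both sides, or the claim is vacuous.

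With the claim in hand the corollary is immediate: Theorem~\ref{thm:mainbis} applied to $G$ gives $\dim(G) \le L(G) + 6c(G)$; the claim gives $L(G) \le L(T)$; and the left inequality of Theorem~\ref{thm:mainbis} applied to the tree $T$, for which $c(T) = 0$, gives $L(T) \le \dim(T)$. Chaining these, $\dim(G) \le L(G) + 6c(G) \le L(T) + 6c(G) \le \dim(T) + 6c(G)$, as claimed. I expect the only genuine work to be the monotonicity claim, and the difficulty there is bookkeeping rather than conceptual: one has to describe carefully how a pending path is inherited when a degree-$3$ branch vertex collapses to a degree-$2$ internal vertex or to a leaf, handle the case where both endpoints of $e$ are affected simultaneously, and dispose of the degenerate configurations. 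The constant $6$ plays no role in this deduction and is simply carried over from Theorem~\ref{thm:mainbis}.
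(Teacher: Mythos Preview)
Your proposal is correct and follows essentially the same route as the paper: deduce the corollary from Theorem~\ref{thm:mainbis} via the monotonicity $L(G) \le L(G-e)$ for a non-bridge edge $e$, iterate down to the spanning tree $T$, and then use $L(T)=\dim(T)$.

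Two minor remarks. First, the paper's justification of the monotonicity claim is much shorter than your case analysis: it simply observes that any major vertex $v$ with $\ter_G(v)=d\ge 2$ remains a major vertex with terminal degree at least $d$ in $G-e$ (the pending paths at $v$ consist of bridges, so $e$ touches none of them, and $e$ cannot be incident to $v$ unless $\deg_G(v)\ge 4$). Since $L(G)=\sum_{v\text{ ext.\ major}}(\ter(v)-1)$ sums only nonnegative terms, this immediately gives $L(G)\le L(G-e)$ without tracking what happens at vertices of terminal degree $\le 1$. Second, you state that ``$L$ equals $0$ on both sides'' when $H$ or $H-e$ is a path; the paper's convention is $L(P_n)=1$, not $0$. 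This does not break your argument (cycle $\to$ path gives $0\le 1$), but it is a slip.
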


Informally, Corollary~\ref{cor:ST} ensures that, even if the metric dimension can be widely modified when we add a single edge, the ``amortized cost" of an edge addition is at most $6$ with respect to any spanning tree of $G$.
As far as we know it is the first bound of the metric dimension in terms of the natural lower bound $L(G)$ or of the metric dimension of a spanning tree of $G$. 
 
 Before explaining briefly the outline of the proof, let us discuss a bit the tightness of these results.
Let $k \in \mathbb{N}$. Consider the graph $G_k$ which is a collection of $k$ $C_4$ glued on the central vertex of a path of length $3$ like in Figure \ref{fig2}. The metric dimension of $G_k$ is equal to $2k+1$ (we need to select exactly two vertices per $C_4$ distinct from the center and one extremity of the path), $L(G_k)=1$, and $c=k$. Since $L(G_k)=1$ for every $k$, there exist graphs $G$ such that $\dim(G) = L(G)+2c(G)$. We ask the following question: 

 \begin{question}\label{ques:L+c}
Is it true that for any graph $G$, $\dim(G) \leq L(G) + 2c(G) $ ?
 \end{question}
 
Note that Sedlar and Skrekovski independently ask the same question in \cite{sedlar2021vertex}. The same authors also prove in~\cite{SEDLAR2021} that Question~\ref{ques:L+c} is true for cacti (and tight since $G_k$ is a cactus). 

Let us now discuss the tightness of Corollary~\ref{cor:ST}. If, in the graph $G_k$, we remove one edge of each $C_4$ incident to the central vertex, the resulting spanning tree has metric dimension of order $k+1$ as long as $k \ge 2$. So there exist graphs $G$ for which there exists a spanning tree $T$ satisfying $\dim(G) = \dim(T)+c$. 
 We actually ask the following question:
 
 \begin{question}\label{ques:dimGT}
 Is it true that for any graph $G$ and for every spanning tree $T$ of $G$, we have $\dim(G) \le \dim(T)+c$? 
 \end{question}
 
One can then wonder what happens if we select the best possible spanning tree to start with, i.e., the spanning tree that maximizes the metric dimension. In $G_k$, one can note that if we break the edges of the $C_4$s that are not incident to the center and denote by $T_k$ the resulting tree, then $\dim(G_k)=\dim(T_k)+1$. Surprisingly, we did not find any graph $G$ where the metric dimension is a function of $c$ larger than \textit{any} spanning tree of $G$. We left the existence of such a graph as an open problem.
  \medskip
  
  Let us now briefly discuss the main ingredients of the proof of Theorem~\ref{thm:mainbis}. First, it consists in finding a small feedback vertex set $X$ of the graph such that every connected component is attached to at most two vertices of $X$. We then prove that, if we add to a resolving set of every connected component of $G \setminus X$ few vertices (in terms of $c$) we can ``detect" shortcuts passing through the rest of the graph and then obtain a resolving set of the whole graph $G$. The proof of Theorem~\ref{thm:mainbis} is given in Section~\ref{section3}.

   \medskip

The second part of the paper consists in applying this result in order to prove a weak version of a conjecture linking metric dimension and zero forcing sets in graphs.

\paragraph{Zero forcing sets.}
A {\em zero forcing set} is a subset of vertices colored in black which colors the whole vertex set in black when we apply the following rule: \emph{A vertex is colored black if it is the unique non-black neighbor of a black vertex}. See Figure~\ref{figexz} where the initial set contains three black vertices. The \emph{zero forcing number} of a graph is the minimal size of a zero forcing set, denoted by $Z(G)$. The zero forcing number has been introduced in 2008 to bound the rank of some families of adjacency matrices \cite{AIM2008}. Deciding if the zero forcing number of a graph is at most $k$ is NP-complete~\cite{TREFOIS2015}.

\begin{figure}[ht]
    \centering
    \includegraphics[scale=1]{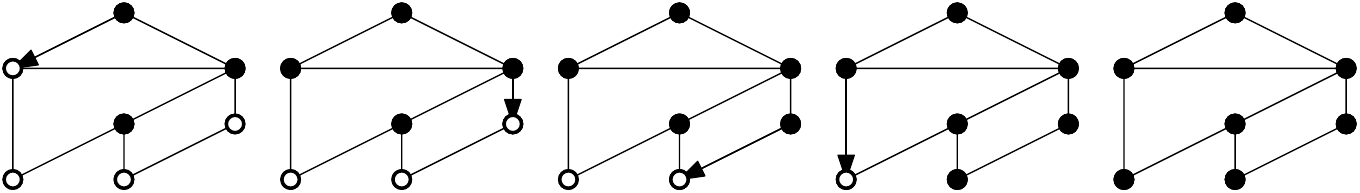}
    \caption{Iterations of the change color rule. On the graph on the left, the three black vertices form a zero forcing set.}
    \label{figexz}
\end{figure}

In general, the gap between metric dimension and zero forcing number can be arbitrarily large. But for some restricted sparse graph classes like paths or cycles, both the optimal parameters and optimal sets are the same. Eroh, Kang and Yi then started a systematic comparison between them~\cite{eroh2017comparison}. They proved that $\dim(G)\leq Z(G)$ when $G$ is a tree and that $\dim(G)\leq Z(G)+1$ when $G$ has one cycle (in other words $G$ is a tree plus an edge). On the other hand, $\dim(G)$ can be arbitrarily larger than the zero forcing number when the number of cycles increases. They conjectured the following:

\begin{conj}[Cycle-rank conjecture \cite{eroh2017comparison}]\label{conj}
For a connected graph, $\dim(G) \leq Z(G) +c(G)$.
\end{conj}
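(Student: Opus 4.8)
The plan is to rephrase the conjecture as a statement about two excesses and to track them together. Since $L(G)\le\dim(G)$ always and $\dim(T)=L(T)$ for a tree, the inequality $\dim(G)\le Z(G)+c(G)$ is equivalent to
\[
\dim(G)-L(G)\ \le\ \bigl(Z(G)-L(G)\bigr)+c(G).
\]
The base case is the class of trees, where $\dim(T)=L(T)$ and $L(T)\le Z(T)$: at a vertex carrying $r$ pending paths a forcing process can leave along at most one of them, so any zero forcing set contains an initial black vertex in at least $r-1$ of these paths, which recovers the tree bound $\dim(T)\le Z(T)$ of Eroh et al. The first thing to stress is that one \emph{cannot} prove the stronger statement $\dim(G)\le L(G)+c(G)$ and then invoke $L(G)\le Z(G)$: the family $G_k$ satisfies $\dim(G_k)=L(G_k)+2c(G_k)$ (this is Question~\ref{ques:L+c}), so the excess $\dim(G)-L(G)$ can reach $2c(G)$. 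The missing slack must be extracted from the gap $Z(G)-L(G)$, and the whole difficulty is to show that this gap is always large enough.

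First I would build $G$ from a spanning tree $T$ by re-inserting the $c(G)$ non-tree edges, organised as an ear decomposition so that each added ear raises the cycle rank by exactly $1$. The aim is to telescope, reducing the conjecture to the single \emph{per-ear} inequality
\[
\dim(G'+e)-\dim(G')\ \le\ \bigl(Z(G'+e)-Z(G')\bigr)+1
\]
along the construction $G'=T,\,T+\mathrm{ear}_1,\dots,G$. Indeed, summing this over all ears gives $\dim(G)-\dim(T)\le\bigl(Z(G)-Z(T)\bigr)+c(G)$, and combining with the base case $\dim(T)\le Z(T)$ yields exactly $\dim(G)\le Z(G)+c(G)$. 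The prototype is again $G_k$: each $C_4$ glued at the centre raises $\dim$ by $2$, raises $c$ by $1$, and forces one extra initial black vertex, raising $Z$ by $1$, so the per-ear budget balances with equality. For cycles that are pairwise edge-disjoint one can order the ears so that each increment $\dim(G'+e)-\dim(G')$ stays small and is localised to the new cycle, which is what makes the scheme go through on that class and strictly generalises the unicyclic case.

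The engine supplying the lower bounds on $Z$ is the standard structural fact that a minimum zero forcing set induces a partition of $V(G)$ into $Z(G)$ \emph{forcing chains}, each an induced path traced by the forcing moves. A forcing chain entering a branch vertex can continue along only one outgoing edge, so an ear that creates genuine new branching must spawn an additional chain, equivalently force an additional initial black vertex; making this precise and compatible with the $L(G)$ bookkeeping at branch vertices is how one would certify $Z(G'+e)-Z(G')\ge \dim(G'+e)-\dim(G')-1$ in the controlled cases. This is also where one sees why the crude route via a global bound such as $Z(G)\ge L(G)+c(G)$ fails (it is already false for $K_5$): for dense graphs the conjecture is slack because $c(G)$ is huge, and it is only in the sparse, tight regime that the forcing-chain accounting must be delicate.

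The hard part will be the interaction of cycles that are \emph{not} edge-disjoint. A single edge can change $\dim$ by $\Omega(n)$ (Figure~\ref{fig_delete_edge}), so the per-edge increments of $\dim$ are wild and non-monotone; the per-ear inequality can only survive if every spike of $\dim$ is accompanied by an equally large spike of $Z$, with the difference pinned down to the tight constant $+1$. When cycles share edges or vertices this localisation breaks: a single forcing chain may thread through the shared structure and serve several cycles at once, decoupling the $+1$ gained in $Z$ from the $+1$ spent in $c$, while the resolving set must simultaneously cope with many shortcuts through the same shared edges. Controlling $\dim$ and the forcing-chain structure \emph{globally and simultaneously} across overlapping cycles is precisely the obstacle I do not expect to overcome by the ear-by-ear method; this is why the argument yields the conjecture in full only for pairwise edge-disjoint cycles, the general amortised bound $\dim(G)\le Z(G)+6c(G)$ following instead from Theorem~\ref{thm:mainbis}, and why Conjecture~\ref{conj} remains open.
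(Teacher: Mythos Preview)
This statement is a \emph{conjecture}, and the paper does not prove it; you correctly recognise this and end by saying so explicitly. So there is no ``paper's own proof'' to compare against for the full statement. What can be compared is your route to the edge-disjoint-cycles case against the paper's Theorem~\ref{edgedisjoint}.

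Your telescoping/per-ear framework and the paper's induction are two presentations of the same argument. The paper runs top-down: given a cactus $G$ with $c(G)>0$, pick a cycle $C$, use Lemma~\ref{zero} to select $e\in E(C)$ with $Z(G-e)\le Z(G)$, use Lemma~\ref{lemme:vardim} (whose hypothesis is exactly the edge-disjoint-cycle condition) to get $\dim(G)\le\dim(G-e)+1$, and recurse. Read in reverse this is precisely your per-ear inequality $\dim(G'+e)-\dim(G')\le (Z(G'+e)-Z(G'))+1$ along a \emph{specific} ordering of the non-tree edges, namely the one produced by repeatedly invoking Lemma~\ref{zero}. Your proposal is a bit vaguer about this: you say one can ``order the ears'' so the increment in $\dim$ stays small, but the mechanism that \emph{guarantees} such an ordering exists is exactly the pair Lemma~\ref{zero} (for the $Z$-side) and Lemma~\ref{lemme:vardim} (for the $\dim$-side). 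Without naming those two ingredients the per-ear inequality is an assertion, not a proof; with them, your sketch and the paper's proof coincide.

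Your diagnosis of the obstruction in the general case is accurate and matches the paper's: Lemma~\ref{lemme:vardim} needs the components hanging off the cycle to be disjoint, which fails once cycles share edges, and no replacement controlling $\dim(G)-\dim(G-e)$ to within $+1$ is known. The forcing-chain heuristic you describe is the standard picture, but as you note it does not by itself pin the constant down to $+1$ when chains can thread several overlapping cycles. In short: nothing you wrote is wrong, your partial argument is the paper's argument in different clothing, and you have not found a new idea towards the open conjecture.
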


Conjecture~\ref{conj} is tight for an infinite family of graphs: The graph $G_k$ contains a path of $3$ vertices and $k$ cycles of size $4$ with the central vertex of the path in common. Figure~\ref{fig2} shows the graph $G_3$ with $c(G_3)=3$.

\begin{figure}[ht]
    \centering
    \includegraphics[scale=1.2]{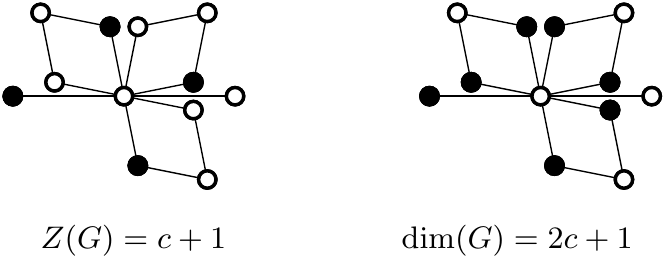}
    \caption{Tightness of Conjecture~\ref{conj}.}
    \label{fig2}
\end{figure}

Eroh et al. proved in~\cite{eroh2017comparison} that $\dim(G)\leq Z(G) + 2c(G)$ if $G$ contains no even induced cycles.
Our main contribution to this question is to prove a weaker version of Conjecture~\ref{conj} in Section~\ref{sec:consZF}, whose proof is mainly based on an application of Theorem~\ref{thm:mainbis}.

  \begin{restatable}{theorem}{thmZFdim}
\label{thm:main}
For every graph $G$, we have 
\[\dim(G)\leq Z(G)+6 c(G). \]
\end{restatable}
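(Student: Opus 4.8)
The natural strategy is to combine Theorem~\ref{thm:mainbis} with the known comparison of the zero forcing number and the natural lower bound $L$ on trees. Concretely, since $Z$ never decreases when we delete edges (deleting an edge can only make forcing harder), and since $Z(T)\geq \dim(T)=L(T)$ for a tree $T$ by the result of Eroh, Kang and Yi together with the Chartrand et al.\ equality, I would first argue that $Z(G)\geq L(G)$ for every graph $G$: take a spanning tree $T$ of $G$, obtained by deleting $c(G)$ edges, and observe that $L(G)=L(T)$ whenever $T$ is chosen appropriately, because deleting non-bridge edges does not decrease $L$ (this is exactly the observation used to deduce Corollary~\ref{cor:ST} from Theorem~\ref{thm:mainbis}). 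Hence $L(G)\leq L(T)\le\dim(T)=L(T)\leq Z(T)\leq Z(G)$.

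\textbf{Assembling the bound.} Once $L(G)\leq Z(G)$ is in hand, the theorem is immediate: Theorem~\ref{thm:mainbis} gives $\dim(G)\leq L(G)+6c(G)$, and substituting $L(G)\leq Z(G)$ yields
\[
\dim(G)\ \leq\ L(G)+6c(G)\ \leq\ Z(G)+6c(G),
\]
which is exactly the claim. So the entire content of Theorem~\ref{thm:main} reduces to Theorem~\ref{thm:mainbis} plus the monotonicity of $Z$ under edge deletion and the tree inequality $\dim(T)\leq Z(T)$.

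\textbf{Main obstacle.} The only subtle point is justifying $L(G)\leq Z(G)$ cleanly. One has to be careful that $L$ is defined via pending paths attached to vertices, and when passing from $G$ to a spanning tree $T$ we must ensure $L(T)\geq L(G)$ rather than the reverse. The safe way is: every pending path of $G$ attached at a vertex $v$ is still a pending path of $T$ attached at $v$ (edges of a pending path and the identity of $v$ are untouched if we only delete edges lying on cycles), so $L$ cannot decrease; combined with $\dim(T)=L(T)$ for trees this gives $L(G)\leq L(T)=\dim(T)\leq Z(T)\leq Z(G)$. I would state this as a short lemma (or fold it into the proof), then close with the two-line computation above. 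No heavy machinery beyond Theorem~\ref{thm:mainbis} is needed; the work is all in the already-proved metric dimension result.
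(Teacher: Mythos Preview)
Your overall strategy---establish $L(G)\leq Z(G)$ and then invoke Theorem~\ref{thm:mainbis}---is sound and is essentially what the paper does. However, the justification you give for $Z(T)\leq Z(G)$ is wrong: the claim that ``$Z$ never decreases when we delete edges (deleting an edge can only make forcing harder)'' is false. Deleting an edge can strictly decrease the zero forcing number; for instance $Z(C_n)=2$ while $Z(P_n)=1$, and Lemma~\ref{varZ} only guarantees $|Z(G)-Z(G-e)|\leq 1$ in either direction. So as written, your chain $L(G)\leq L(T)=\dim(T)\leq Z(T)\leq Z(G)$ breaks at the last step for an arbitrary spanning tree~$T$.

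There are two immediate repairs. First, the inequality $L(G)\leq Z(G)$ you want is exactly Lemma~\ref{lowerbound}, so you can cite it directly and skip the spanning-tree detour entirely; then Theorem~\ref{thm:mainbis} gives $\dim(G)\leq L(G)+6c(G)\leq Z(G)+6c(G)$ in one line. Second---and this is the route the paper takes---Lemma~\ref{zero} shows that every cycle contains an edge $e$ with $Z(G-e)\leq Z(G)$; iterating this $c(G)$ times produces a \emph{particular} spanning tree $T$ with $Z(T)\leq Z(G)$, after which your chain (or equivalently Corollary~\ref{cor:ST}) goes through. Either fix makes the argument correct and of the same length as the paper's proof.
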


As far as we know, it is the first upper bound of $\dim(G)$ of the form $Z(G)+f(c(G))$. 

Note that the dependency on $c(G)$ cannot be removed, i.e.,  $\dim(G)$ cannot be upper bounded by a function of $Z(G)$ only. For the wheel of $n$ vertices (a cycle plus a universal vertex, see Figure \ref{figwheel}), the zero forcing number is $3$ for any $n \geq 4$ but the metric dimension is a linear function in $n$. 
\begin{figure}
    \centering
    \includegraphics[scale=1]{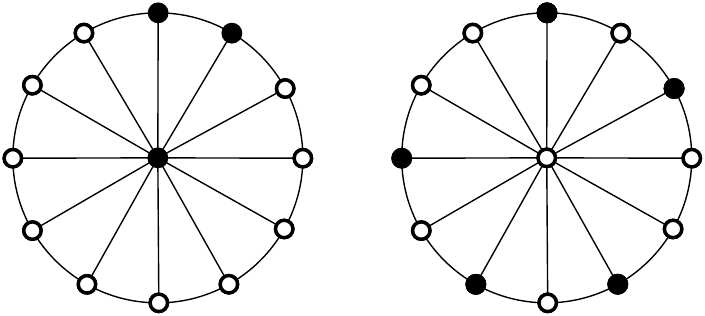}
    \caption{The zero forcing number of a wheel is $3$ (while $n \geq 4$) but its metric dimension is linear in $n$.}
    \label{figwheel}
\end{figure}

We also prove Conjecture~\ref{conj} in several particular cases. 
We first focus on unicyclic graphs. We give an alternative proof of Conjecture~\ref{conj} for unicyclic graphs with a much shorter and simpler proof than the one of~\cite{eroh2017comparison}.
We then extend our results to prove Conjecture~\ref{conj} for cactus graphs\footnote{This result is proved independently in \cite{sedlar2021vertex} with a different method.} (graphs with edge-disjoint cycles).
It generalizes the result on unicyclic graphs and is based on a very simple induction whose base case is the case of unicyclic graphs. Since cactus graphs contain the class of graphs with no even cycles, it improves the result of~\cite{eroh2017comparison} on even-cycle-free graphs.

We finally show that $\dim(G) \leq Z(G)$ when the unique cycle of $G$ has odd length. This result is tight and cannot be extended to unicyclic graphs with an even cycle as shown in Figure \ref{figzdim}. All the results related to zero forcing sets are proved in Section~\ref{section4}.

 \begin{figure}[ht]
    \centering
    \includegraphics[scale=1.2]{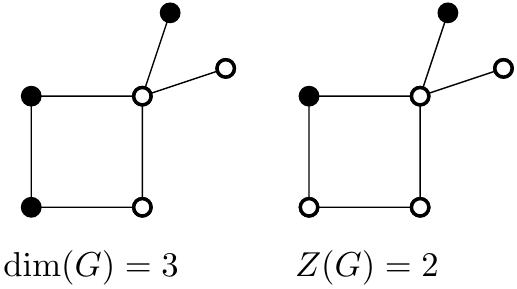}
    \caption{Black vertices form respectively a minimal resolving set and a minimal zero forcing set.}
    \label{figzdim}
\end{figure}

\section{Preliminaries} \label{section2}
\subsection{Definitions and notations}
Unless otherwise stated, all the graphs considered in this paper are undirected, simple, finite and  connected. For standard terminology and notations on graphs, we refer the reader to \cite{bookgraph}.

Let $G=(V,E)$ be a graph. The \emph{distance} between two vertices $u,v \in V$, denoted by $d_G(u,v)$ (or simply $d(u,v)$ when $G$ is clear from context), is the length of a shortest path from $u$ to $v$ in $G$. When no such path exists, we state $d_G(u,v)=+\infty$.
For $v \in V$, let $N(v)$ be the (open) neighborhood of $v$ defined as $N(v) = \{u \in V, \; uv \in E\}$. We say that two vertices $v$ and $w$ are \emph{twins} if $N(v)\setminus \{w\} =N(w)\setminus \{v\}$. For $X \subseteq V$, let $G[X]$ be the subgraph of $G$ induced by $X$. In other words, $G[X]$ is the graph with vertex set $X$ where $xy$ is an edge if and only if it is an edge of $G$. We denote by $G \setminus X$ the subgraph of $G$ induced by $V \setminus X$. The \emph{border of $X$}, denoted by $\partial X$, is $\{ u \in G \setminus X | \; \exists v\in X, uv \in E \}$.
 
A vertex $w \in V$ \emph{resolves} a pair of vertices $(u,v)$ if $d(w,u) \neq d(w,v)$. Let $S \subseteq V$. The set $S$ \emph{resolves} the pair $(u,v)$ if at least one vertex in $S$ resolves the pair $(u,v)$ and $S$ resolves a set $W \subseteq V$ if $S$ resolves all the pairs of $W$. A set $S$ is a \emph{resolving set} of $G$ if $S$ resolves $V$. \emph{The metric dimension} $\dim(G)$ of $G$ is the minimum cardinality of a resolving set in $G$. A resolving set of minimum size is called a \emph{metric basis}.

Let $Z\subseteq V$ be a set of vertices. The vertices in $Z$ are colored in black whereas the other vertices are white. The \emph{color change rule} converts a white vertex $u$ into a black vertex if $u$ is the only white neighbor of a black vertex. The set $Z$ is a \emph{zero forcing set} of $G$ if all the vertices of $G$ can be turned black after finitely many applications of the color change rule. For $u$ and $v$ two vertices in $V$ and a sequence of applications of the color change rule, we say that $u$ \emph{forces} $v$ if at some step $u$ is turned black with the color change rule because of $v$. We say that the edge $uv$ is used to force $u$.  \emph{The zero forcing number} $Z(G)$ of $G$ is the minimal cardinality of a zero forcing set in $G$.

 The \emph{cycle rank} of $G$, denoted by $c(G)$ (or $c$ if the context is clear enough), is the minimum number of edges that should be deleted from $G$ to get a forest. Note that we have $c(G)=|E|-|V|+cc(G)$ where $cc(G)$ is the number of connected components of $G$. A graph $G$ is \emph{unicyclic} if $G$ is connected with $c(G)=1$.
 A {\em feedback vertex set} of $G$ is a subset of vertices such that $G \setminus X$ is a forest.  We denote by $\tau(G)$ (or $\tau$ if the context is clear enough) the minimum size of a feedback vertex set of $G$. Note that if $X$ has minimum size, then $\tau(G)\leq c(G)$.

\subsection{Resolving sets and zero forcing sets on trees}

Chartrand et al. \cite{CHARTRAND2000} introduced the following terminology to study resolving sets in trees.
We extend this terminology to general graphs (see Figure~\ref{figtreename} for an illustration).\\
A vertex of degree $1$ is called a \emph{terminal vertex}.\\
A vertex of degree at least 3 is a \emph{major vertex}. A terminal vertex $u$ is called \emph{a terminal vertex of a major vertex $v$} if $d(u,v)<d(u,w)$ for every other major vertex $w$. In other words $u$ and $v$ are linked by a path of degree $2$ vertices. The \emph{terminal degree} of a major vertex $v$ is the number of terminal vertices of $v$, denoted by $\ter(v)$. A major vertex is \emph{exterior} if its terminal degree is positive, and \emph{interior} otherwise. \\
A degree-$2$ vertex is an \emph{exterior degree-2 vertex} if it lies on a path between a terminal vertex and its major vertex. It is an \emph{interior degree-2 vertex} otherwise.

\begin{figure}[ht]
    \centering
    \includegraphics[scale=0.9]{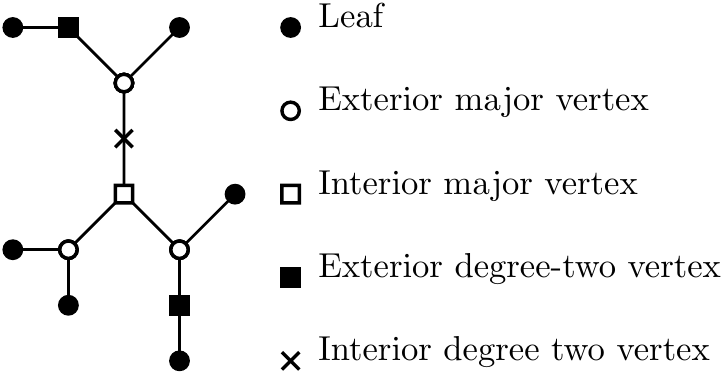}
    \caption{Vertices denomination in a graph.}
    \label{figtreename}
\end{figure}

Let $\sigma(G)$ be the sum of the terminal degrees over all the major vertices in $G$ and $ex(G)$ be the number of exterior major vertices in $G$. Let $L(G)=\sigma(G)-ex(G)$. If $G$ is a path $P_n$ for some $n \geq 1$, let $L(G)=1$.
We can bound $\dim(G)$ and $Z(G)$ with this parameter:

\begin{lemme}\cite{eroh2017comparison} \label{lowerbound}
 For any connected graph $G$, $\dim(G) \geq L(G)$ and $Z(G) \geq L(G)$.
\end{lemme}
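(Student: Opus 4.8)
The plan is to derive both inequalities from a single structural fact about pending paths, in the spirit of the star argument sketched in the introduction. Fix an exterior major vertex $v$ with $\ter(v)=t$; by definition there are exactly $t$ paths $P_1,\dots,P_t$ joining $v$ to its terminal vertices $u_1,\dots,u_t$, each $P_i$ consisting, apart from $v$, only of exterior degree-$2$ vertices. Write $P_i^{\circ}=V(P_i)\setminus\{v\}$ and $\ell_i=d(v,u_i)$, so $|P_i^{\circ}|=\ell_i$. The $P_i^{\circ}$ attached to a common $v$ are pairwise disjoint, and since $G$ is connected and not a path every exterior degree-$2$ vertex lies on a unique pending path and every terminal vertex has a unique nearest major vertex; hence all the sets $P_i^{\circ}$, ranging over every pending path of every exterior major vertex, are pairwise disjoint. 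The key claim is: if $S$ is a resolving set of $G$, or a zero forcing set of $G$, then for every exterior major vertex $v$ at most one of its pending paths $P_i$ satisfies $S\cap P_i^{\circ}=\emptyset$. Granting this, each exterior major vertex forces $S$ to meet $P_i^{\circ}$ for at least $\ter(v)-1$ indices $i$, and by global disjointness we may pick a distinct element of $S$ in each, so $|S|\ge\sum_v(\ter(v)-1)=\sigma(G)-ex(G)=L(G)$ (the path case being immediate since $\dim(G)=Z(G)=1=L(G)$). Applying this to a metric basis and to a minimum zero forcing set yields $\dim(G)\ge L(G)$ and $Z(G)\ge L(G)$.

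For the resolving-set case, suppose two pending paths of $v$, say $P_i$ and $P_j$ with $\ell_i\le\ell_j$, both avoid $S$, and let $y$ be the vertex of $P_j$ at distance $\ell_i$ from $v$. Since $P_i^{\circ}$ is joined to $G\setminus P_i^{\circ}$ only through $v$, every path from a vertex $s\notin P_i^{\circ}$ to $u_i$ passes through $v$, so $d(s,u_i)=d(s,v)+\ell_i$; likewise $d(s,y)=d(s,v)+\ell_i$ for every $s\notin P_j^{\circ}$. Every $s\in S$ lies outside both $P_i^{\circ}$ and $P_j^{\circ}$ (and the equalities hold trivially for $s=v$), so $d(s,u_i)=d(s,y)$ for all $s\in S$; as $u_i\ne y$, the pair $(u_i,y)$ is unresolved, a contradiction.

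For the zero-forcing case, recall first that in any run of the colour change rule each vertex forces at most one other vertex, because once a black vertex has forced a vertex it has no white neighbour left (white vertices never reappear). Suppose $P_i$ and $P_j$ both avoid $S$. Then $P_i^{\circ}$ induces a path $x_1-x_2-\cdots-x_{\ell_i}=u_i$ with $x_1$ adjacent to $v$, joined to the rest of $G$ only via the edge $vx_1$, and all of $x_1,\dots,x_{\ell_i}$ white initially. Considering the first vertex of $P_i^{\circ}$ to become black, its forcing neighbour cannot lie in $P_i^{\circ}$ (that neighbour would have turned black earlier), so this first vertex is $x_1$ and it is forced by $v$. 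The same argument applied to $P_j$ shows $v$ also forces the $v$-neighbour of $P_j$, contradicting that $v$ forces at most one vertex. This proves the claim, hence the lemma.

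The two resolving/forcing arguments are short and essentially parallel; I expect the only delicate point to be the disjointness bookkeeping — in particular verifying that in a connected non-path graph the notions of terminal vertex, its unique nearest major vertex, and exterior degree-$2$ vertex are well defined, so that the per-vertex bounds $\ter(v)-1$ genuinely add up to $L(G)$.
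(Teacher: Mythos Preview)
The paper does not prove Lemma~\ref{lowerbound}; it is quoted from \cite{eroh2017comparison} without proof, and only the metric-dimension half is sketched informally in the introduction (the ``$r$ pending paths attached to $v$ force a vertex in $r-1$ of them'' remark). Your argument is correct and is exactly the standard proof behind that sketch, carried out cleanly for both $\dim$ and $Z$; in particular your timing argument for zero forcing (the first vertex of $P_i^\circ\cup P_j^\circ$ to turn black must be a neighbour of $v$, yet $v$ then has two white neighbours) is the right way to handle that case.
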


\begin{lemme}\cite{eroh2015effect}  \label{dimtree}
Let $T$ be any tree, then, $dim(T)=L(T)$. Moreover, if $T$ is not a path, any set containing all but exactly one terminal vertices of every major vertex is a resolving set of~$T$.
\end{lemme}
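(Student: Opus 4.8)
Since $\dim(T)\ge L(T)$ is Lemma~\ref{lowerbound}, it suffices to exhibit a resolving set of size $L(T)$; the natural candidate is the set $S$ from the ``moreover'' part, and $|S|=\sum(\ter(v)-1)=\sigma(T)-ex(T)=L(T)$ (sum over exterior major vertices, using that distinct major vertices own disjoint sets of terminal vertices). If $T$ is a path, one endpoint resolves it and $L(T)=1$, so assume $T$ is not a path; then $T$ has a major vertex, and $L(T)\ge1$ because, rooting $T$ at a leaf, a deepest major vertex has $\ge 2$ children whose subtrees carry no major vertex, hence are pendant paths, forcing terminal degree $\ge2$. The remaining task is to prove $S$ resolves $T$.

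My plan is to first establish a convenient characterization: for a tree $T$ that is not a path, a set $W$ resolves $T$ if and only if for every vertex $m$, $W$ meets all but at most one of the connected components of $T-m$. The core is the standard observation that, for distinct $u,w$ with path $P$, if $z$ is the vertex of $P$ closest to $s$ then all shortest paths from $s$ into $P$ pass through $z$, so $d(s,u)=d(s,z)+d(z,u)$ and symmetrically for $w$; hence $s$ fails to resolve $(u,w)$ exactly when $z$ is the midpoint of $P$, which forces $|P|$ even. Therefore $W$ fails on $(u,w)$ iff $|P|$ is even with midpoint vertex $m'$ and $W$ avoids the two (distinct) components of $T-m'$ containing $u$ and containing $w$. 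This gives both implications; for the ``only if'' direction apply it to the pair of neighbours of $m$ in two $W$-avoiding components, and for ``if'' note that a $W$ meeting the stated condition is nonempty (as $T$ is not a path, some degree-$\ge3$ vertex witnesses it), which also disposes of odd-length paths.

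Next I would check that $S$ meets the condition. Fix $m$; if $\deg(m)\le2$ it is immediate (for $\deg(m)=2$, two $S$-avoiding components force $S=\emptyset$, impossible), so let $m$ be major and root $T$ at $m$, so the components of $T-m$ are the subtrees below its children. Suppose two of them, $C_1$ and $C_2$, avoid $S$. If $C_i$ contains a major vertex, take one of maximum depth, $v_i$: then $v_i\ne m$ has $\ge2$ children, none of whose subtrees contains a major vertex, so each is a pendant path ending in a terminal vertex of $v_i$; moreover all terminal vertices of $v_i$ are descendants of $v_i$ (a non-descendant one would force a major vertex strictly between it and $v_i$). So $v_i$ is exterior with $\ge2$ terminal vertices, all in $C_i$, and $S$ keeps at least one of them --- contradicting $S\cap C_i=\emptyset$. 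Hence $C_1,C_2$ contain no major vertex, so each is a pendant path whose bottom leaf has associated major vertex $m$; thus $m$ is exterior with two distinct terminal vertices $\ell_1\in C_1$, $\ell_2\in C_2$, and $S$, keeping all but one of $m$'s terminal vertices, contains $\ell_1$ or $\ell_2$ --- again a contradiction. So the condition holds for every $m$, $S$ resolves $T$, and $\dim(T)\le|S|=L(T)$.

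The step I expect to be most delicate is a major vertex sitting inside an $S$-avoiding component but having only one terminal vertex: a naive induction on such vertices can loop, which is why the argument jumps straight to a deepest major vertex, where terminal degree $\ge2$ is automatic. The rest is bookkeeping --- that a pendant path of a deep major vertex cannot reach past $m$ (since $\deg(m)\ge3$, $m$ is never an interior vertex of a pendant path), that a subtree without a degree-$\ge3$ vertex is a path, and the behaviour of projections onto paths in trees.
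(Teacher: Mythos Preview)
The paper does not give its own proof of Lemma~\ref{dimtree}; it is quoted from \cite{eroh2015effect} (and ultimately goes back to Chartrand et al.~\cite{CHARTRAND2000} and Slater~\cite{slater1975trees}). So there is nothing to compare against directly.

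Your argument is correct and is essentially the classical one. The characterization you isolate---$W$ resolves a tree $T$ iff for every vertex $m$ the set $W$ misses at most one component of $T-m$---is exactly the right tool; its ``only if'' direction is precisely Lemma~\ref{dimbranch} of the paper, and your ``if'' direction via the midpoint projection is standard and clean. The verification that $S$ satisfies the condition is also correct: the key point, which you identify and handle well, is that a naive choice of major vertex inside an $S$-avoiding component might have terminal degree~$1$ (contributing nothing to $S$), whereas a \emph{deepest} major vertex $v_i$ in that component necessarily has terminal degree $\ge 2$, and its terminal vertices all lie in $C_i$ because the path from any of them to $v_i$ cannot cross $m$ (an internal vertex of such a path has degree~$2$, but $\deg(m)\ge 3$). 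One very small quibble: in your ``all terminal vertices of $v_i$ are descendants of $v_i$'' step, what you actually need (and what your parenthetical argument delivers) is only that they lie in $C_i$, which is slightly weaker and avoids a separate case for ancestors.
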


There is no similar result on the zero forcing number of a tree. 
The gap between the zero forcing number and the metric dimension can be arbitrarily large on trees. 

Lemmas~\ref{lowerbound} and~\ref{dimtree} imply that trees satisfy Conjecture~\ref{conj}. Moreover, the equality case can be characterized:

\begin{lemme}\cite{eroh2017comparison}\label{treecomp}
For every tree $T$, $\dim(T) \leq Z(T)$. The equality holds if and only if $T$ has no interior degree-2 vertices and each major vertex has terminal degree at least two.
\end{lemme}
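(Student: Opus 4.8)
The plan is to notice first that the inequality part is essentially already in hand: by Lemma~\ref{dimtree} we have $\dim(T)=L(T)$ (with the convention $\dim(P_n)=1=L(P_n)$ for paths), and by Lemma~\ref{lowerbound} we have $Z(T)\ge L(T)$, so $\dim(T)\le Z(T)$; moreover $\dim(T)=Z(T)$ holds exactly when $Z(T)=L(T)$. Hence the real content is the characterization of the trees attaining $Z(T)=L(T)$, and the proof splits into an ``if'' and an ``only if'' direction.

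For the ``if'' direction, assume $T$ is not a path, has no interior degree-$2$ vertex, and each major vertex has terminal degree at least $2$ (note this already forces every major vertex to be exterior). I take the set $S$ of Lemma~\ref{dimtree}: for each major vertex $v$, put all but one of its terminal vertices in $S$, so $|S|=\sigma(T)-ex(T)=L(T)$. I claim $S$ is also a zero forcing set. Running the color change rule from $S$: from a black terminal vertex $t\in S$ the color propagates unobstructed along the path of degree-$2$ vertices joining $t$ to its major vertex (each such vertex has a unique white neighbour) and finally blackens that major vertex; since every major vertex owns at least one terminal vertex of $S$, after this phase all major vertices are black and so is the first vertex of every ``chosen'' terminal path. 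Now fix a major vertex $v$; as $T$ has no interior degree-$2$ vertex, every neighbour of $v$ is either a major vertex (black) or the first vertex of a terminal path of $v$, and exactly one of the latter — on the unique terminal path whose endpoint was not chosen — is still white, so $v$ forces it and the color runs down that path. Doing this at every major vertex blackens $T$, so $Z(T)\le|S|=L(T)$, and with Lemma~\ref{lowerbound} we conclude $Z(T)=L(T)=\dim(T)$. (If $T$ is a path, $\dim(T)=Z(T)=1$ trivially.)

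For the ``only if'' direction I argue by contraposition: assume $T$ has an interior degree-$2$ vertex, or a major vertex of terminal degree at most $1$, and aim for $Z(T)\ge L(T)+1$. The starting point is the standard fact that, in any run of the color change rule, the forcing chains are vertex-disjoint paths partitioning $V(T)$, each with an endpoint in the starting set; hence $Z(T)\ge P(T)$, the minimum size of a path partition of $T$. Since a path partition of an $n$-vertex tree into $k$ paths uses exactly $n-k$ edges and $T$ is acyclic, $P(T)=1+\mu(T)$ where $\mu(T)$ is the least number of edges one must delete from $T$ to bring every degree down to at most $2$; so it suffices to show $\mu(T)\ge L(T)$ when the hypotheses fail. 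I would first reduce to the case where every terminal path has length $1$ — contracting a pendant path of degree-$2$ vertices to a single pendant edge changes neither $\mu$, nor $L$, nor the hypotheses — and then work on the resulting tree of major vertices joined by paths of interior degree-$2$ vertices and carrying pendant leaves: at a major vertex $v$ at least $\ter(v)-2+(\text{core-degree of }v)$ incident edges must be deleted, deleted edges are double-counted over their two endpoints, and a bookkeeping over the core shows that an interior degree-$2$ vertex, or a major vertex that cannot route a path through two of its terminal branches, blocks enough of these savings to push $\mu(T)$ up to $L(T)$. (Alternatively one may invoke the known identity $Z(T)=P(T)$ for trees and argue directly on path partitions.)

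The routine parts are the inequality and the ``if'' construction. The genuine difficulty is the ``only if'' direction: converting a purely local defect — an interior degree-$2$ vertex, or a major vertex with too few terminal branches — into the global bound $\mu(T)\ge L(T)$. I expect to carry this out by induction on the number of major vertices, peeling off a ``leaf'' major vertex of the core together with its pendant leaves and tracking how many path-ends remain exposed across the cut, with separate care for the degenerate configurations (a single major vertex, very short terminal paths, and the path case, where $L$ is defined by hand).
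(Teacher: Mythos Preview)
The paper does not prove this lemma: it is quoted verbatim from \cite{eroh2017comparison} and used as a black box, so there is no in-paper argument to compare yours against. That said, your inequality and your ``if'' direction are correct and essentially complete. The reduction $\dim(T)=L(T)\le Z(T)$ is exactly the combination of Lemmas~\ref{lowerbound} and~\ref{dimtree} that the paper itself relies on, and your forcing argument for the set $S$ is sound: the hypothesis that every major vertex has terminal degree at least two guarantees each major vertex is blackened from one of its own terminal branches, and the absence of interior degree-$2$ vertices is precisely what makes a black major vertex see a \emph{single} remaining white neighbour afterwards.

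The ``only if'' direction, however, is a plan rather than a proof. You import the (true but itself nontrivial) identity $Z(T)=P(T)$ for trees, rewrite the target as $\mu(T)\ge L(T)$, and then stop at ``a bookkeeping over the core shows\ldots'' and ``I expect to carry this out by induction''. That is exactly where the content lies, and you have not supplied it. Concretely, your local estimate ``at $v$ at least $\ter(v)-2+(\text{core-degree of }v)$ incident edges must be deleted'' double-counts every edge whose two endpoints are major, and you have not shown how the presence of an interior degree-$2$ vertex, or of a major vertex with terminal degree at most one, forces the extra $+1$ after this double counting is repaired. If you want to finish along these lines, a clean route is an induction on the number of major vertices, peeling off a leaf of the ``core'' and tracking how many path endpoints are forced across the cut; alternatively, one can bypass path partitions entirely and argue directly that any zero forcing set of size $L(T)$ fails under either hypothesis, which is closer in spirit to \cite{eroh2017comparison}. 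As written, the contrapositive half is a sketch, not a proof.
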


\subsection{Elementary results on metric dimension}
This section is devoted to some elementary results about metric dimension.

\begin{lemme}
\label{technical}
Let $G$ be a graph and $u$, $v$ be two vertices of $G$. Let $s$ and $t$ be two different vertices on a shortest path between $u$ and $v$. Then, $d(u,s) \neq d(v,s)$ or $d(u,t) \neq d(v,t)$.
\end{lemme}
\begin{proof}
Let $P$ be a shortest path from $u$ to $v$ containing $s$ and $t$. Up to symmetry, we can assume that $u,s,t,v$ appear in that order in $P$. Since $P$ is a shortest path, $d(s,v)=d(s,t)+d(t,v)>d(t,v)$ and $d(t,u)=d(s,u)+d(s,t)>d(s,u)$. Assume that $d(u,s)=d(v,s)$.  
Then, $d(u,t)=d(u,s)+d(t,s)$ and $d(v,s)=d(v,t)+d(t,s)$. So, $d(u,t)=d(v,t)+2d(t,s) \neq d(v,t)$ as $t \neq s$.
\end{proof}

\begin{lemme}
\label{cycle1}
Let $G$ be a unicyclic graph with a cycle $C$ of odd length. Then, every pair of vertices of $C$ resolves $C$.
\end{lemme}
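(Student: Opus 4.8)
The plan is to reduce the statement to a purely cyclic computation and then finish with an elementary modular-arithmetic argument.

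First I would observe that distances between two vertices of $C$ are realized inside $C$. Indeed, $G$ is unicyclic, so every vertex not on $C$ lies in a tree pendant at a single vertex of $C$; a shortest path between two vertices $x,y\in V(C)$ that left $C$ would enter such a pendant tree and could only leave it through the same cut vertex, producing a strictly longer walk. Hence for all $x,y\in V(C)$ we have $d_G(x,y)=d_C(x,y)$, the usual cycle-distance, and we may work entirely inside $C$.

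Next, fix an orientation of $C$, let its length be $n=2k+1$, and label its vertices by the residues $\{0,1,\dots,n-1\}$ so that $d_C(i,j)=\min(|i-j|,\,n-|i-j|)$. The key elementary fact I would record is: for any vertex $u\in V(C)$, the only vertex at distance $0$ from $u$ is $u$ itself, and for every $a$ with $1\le a\le k$ the set of vertices at distance exactly $a$ from $u$ is precisely $\{u+a,\,u-a\}$ (computed modulo $n$), which consists of two \emph{distinct} vertices because $n$ is odd. This follows directly from the distance formula, the point being that $k=\lfloor n/2\rfloor$ is the maximum distance attained.

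Finally, fix two distinct vertices $u,v$ of $C$ and suppose, for contradiction, that the pair $\{u,v\}$ fails to resolve some pair of distinct vertices $x,y$ of $C$. Set $a=d(u,x)=d(u,y)$ and $b=d(v,x)=d(v,y)$; since $x\ne y$ we have $a,b\ge 1$. By the fact above, $\{x,y\}=\{u+a,u-a\}=\{v+b,v-b\}$. Matching these two descriptions of $\{x,y\}$ in either of the two possible ways and adding the two resulting congruences, all terms $\pm a$ and $\pm b$ cancel and we obtain $2u\equiv 2v \pmod n$; since $n$ is odd, $2$ is invertible modulo $n$, so $u\equiv v$, contradicting $u\ne v$. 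Hence $\{u,v\}$ resolves $C$.

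The only step requiring any care is the first reduction (that distances among cycle vertices are realized within $C$), which uses unicyclicity; everything afterwards is a two-line computation. Note that oddness of $n$ is used exactly once but essentially: on an even cycle a vertex has a unique antipode and $2$ is not invertible modulo $n$, which is exactly why the statement breaks down there.
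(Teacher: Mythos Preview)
Your proof is correct. The reduction to distances in $C$ via unicyclicity is properly justified, and the modular-arithmetic step is clean: from $\{u+a,u-a\}=\{v+b,v-b\}$ either matching gives $2u\equiv 2v\pmod{n}$, and oddness of $n$ finishes it.

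The paper takes a slightly different, more geometric route. Instead of fixing the resolving pair and deriving a contradiction, it fixes a target pair $(u,v)$ of vertices on $C$ and observes that, because the two $u$--$v$ arcs of $C$ have different parities, there is a \emph{unique} vertex $w$ of $C$ equidistant from $u$ and $v$ (the midpoint of the even arc). Thus at most one vertex of $C$ fails to resolve $(u,v)$, so any set of two distinct vertices of $C$ resolves it. Your argument is the ``dual'' viewpoint: you fix the resolvers and show no target pair can evade both. The paper's version yields the extra structural information that the unique non-resolver of $(u,v)$ is the midpoint of the even arc, which is conceptually pleasant; your version is a one-line algebraic computation that makes the role of oddness (invertibility of $2$ modulo $n$) completely explicit. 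Both are equally short and each makes clear, in its own way, why the lemma fails for even cycles.
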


\begin{proof}
Let $u$ and $v$ be two vertices of $C$. There are two paths between $u$ and $v$ on $C$, one of odd length and the other of even length. There exists a unique vertex $w$ of $C$ at the same distance from $u$ and $v$ in $C$ and then in $G$ since $G$ is unicyclic which is the middle of the path of even length. The vertex $w$ is the unique vertex of $C$ that does not resolve the pair $(u,v)$. So any pair of vertices of $C$ resolves $C$.
\end{proof}

\begin{lemme}
\label{cycle3}
Let $G=(V,E)$ be a graph and $C$ be a cycle of $G$. If $V(C)=\{v_0,v_1,...,v_k\} $ and for any $i\leq j$ $d(v_i,v_j)=\min(j-i,k-j+i+1)$ \footnote{This condition  ensures that there is no shortcut between the vertices of $C$.}, then, for any set $S \subseteq C$ of size at least $3$, $S$ resolves $C$.
\end{lemme}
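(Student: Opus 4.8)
The plan is to show that any $3$-element subset $S=\{v_a,v_b,v_c\}\subseteq V(C)$ resolves $C$, using the hypothesis that distances in $G$ between vertices of $C$ agree with distances measured along $C$ (no shortcuts). I would first reduce the problem to a statement purely about the cycle $C_{k+1}$ with its intrinsic metric: since $d_G(v_i,v_j)=\min(j-i,(k+1)-(j-i))$ for all $i,j$, two vertices $v_i,v_j$ of $C$ are resolved by $v_\ell\in S$ in $G$ if and only if they are resolved by $v_\ell$ inside the abstract cycle $C_{k+1}$. So it suffices to prove: in a cycle on $n:=k+1$ vertices with the graph metric, every $3$-subset is a resolving set of the cycle itself.

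**The two-vertex analysis.** The key local fact, already essentially recorded in Lemma~\ref{cycle1}'s proof style, is this: for a single vertex $v_\ell$ on the cycle and a pair $(v_i,v_j)$, we have $d(v_\ell,v_i)=d(v_\ell,v_j)$ exactly when $v_\ell$ is equidistant from $v_i$ and $v_j$ along the cycle. On a cycle of $n$ vertices, the set of vertices equidistant (in the cycle metric) from a fixed pair $(v_i,v_j)$ consists of: if $n$ is odd, exactly one vertex (the midpoint of the even-length arc) — wait, one must be careful, since on a cycle there are two arcs between $v_i$ and $v_j$ and hence potentially two ``midpoints''. Precisely, writing $2m$ or $2m+1$ for the two arc lengths (which sum to $n$), the non-resolving vertices for the pair $(v_i,v_j)$ form a set $B(v_i,v_j)$ of size at most $2$: it is the midpoint of any even-length arc between them. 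So $|B(v_i,v_j)|\le 2$ always, with equality only when both arcs have even length, i.e. when $n$ is even and $d(v_i,v_j)$ is even.

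**Finishing by a counting/pigeonhole argument.** Now suppose for contradiction that $S=\{v_a,v_b,v_c\}$ fails to resolve some pair $(v_i,v_j)$; then $\{v_a,v_b,v_c\}\subseteq B(v_i,v_j)$, which is impossible since $|B(v_i,v_j)|\le 2 < 3$. That immediately gives the result, so in fact the argument is shorter than I first anticipated: the only content is the bound $|B(v_i,v_j)|\le 2$, which follows from the no-shortcut hypothesis translating $G$-distances to cycle-distances, plus the elementary observation that on a cycle at most one point bisects each of the (at most two) arcs joining $v_i$ to $v_j$, and a point can only be a bisector of an arc of even length.

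**Main obstacle.** The one place requiring care is the ``translation'' step: I must verify that the hypothesis $d(v_i,v_j)=\min(j-i,k-j+i+1)$ genuinely forbids a vertex $v_\ell$ outside the pair from being equidistant to $v_i,v_j$ via some path through $V\setminus V(C)$ in a way not captured by the cycle metric. But this is exactly what the hypothesis asserts for \emph{all} pairs of cycle vertices, so $d_G$ restricted to $V(C)$ \emph{is} the cycle metric, and no further subtlety arises. An alternative, slightly more hands-on route — which I would mention as a remark — is to argue directly as in Lemma~\ref{cycle1}: pick two of the three vertices of $S$, say $v_a,v_b$; they already resolve all of $C$ except possibly the midpoint(s) of the even arc(s) between them; the third vertex $v_c$ is distinct from both, and one checks it cannot simultaneously be the (unique, when $n$ is odd) bad vertex — and when $n$ is even, the two bad vertices are antipodal endpoints of the two even arcs, and it suffices to observe $v_c$ cannot equal a pair of distinct vertices. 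Either way the proof is a few lines.
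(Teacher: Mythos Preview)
Your main argument is correct and takes a genuinely different route from the paper. The paper proceeds by direct case analysis: fixing $S=\{v_a,v_b,v_c\}$ with $a<b<c$ and a hypothetically unresolved pair $(v_0,v_y)$, it splits according to whether $y<a$, $a<y<b$, $b<y<c$, or $y>c$, and in each case compares the directions of shortest paths to derive a contradiction. Your approach instead isolates the single structural fact that, for any pair $(v_i,v_j)$ on a cycle with its intrinsic metric, the set $B(v_i,v_j)$ of cycle vertices equidistant from both has size at most $2$ (the midpoints of the two arcs, when those arcs have even length), and then finishes by pigeonhole: a $3$-element $S$ cannot sit inside $B(v_i,v_j)$. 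This is shorter and makes transparent exactly why three points suffice and two may not; the paper's argument is more hands-on but also more opaque. The translation step you flag as the ``main obstacle'' is indeed immediate from the hypothesis, which pins down $d_G$ on \emph{all} pairs of cycle vertices.

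One caveat: the ``alternative, slightly more hands-on route'' you sketch at the end is garbled. Saying that $\{v_a,v_b\}$ ``resolve all of $C$ except possibly the midpoint(s) of the even arc(s) between them'' conflates resolving \emph{pairs} with resolving individual \emph{vertices}; moreover the two arc-midpoints $m_1,m_2$ between $v_a$ and $v_b$ have distances $p/2$ and $(n-p)/2$ to $v_a$, so they are distinguished by $v_a$ alone unless $v_a,v_b$ are antipodal. The salvageable version is: the pairs not resolved by $\{v_a,v_b\}$ are exactly those $(v_i,v_j)$ with $\{v_a,v_b\}\subseteq B(v_i,v_j)$, and then $v_c\notin\{v_a,v_b\}$ forces $v_c\notin B(v_i,v_j)$ since $|B(v_i,v_j)|\le 2$ --- but that is just your main pigeonhole argument again. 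I would drop the remark.
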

\begin{proof}
Let $S=\{v_a,v_b,v_c\}$ be any set of three vertices of $C$ and $v_x \ne v_y$ be two vertices of $C$. Assume by contradiction that $S$ does not resolve the pair $(v_x,v_y)$.

Note that neither $v_x$ nor $v_y$ belongs to $ \{v_a,v_b,v_c \}$ since otherwise $(v_x,v_y)$ would be resolved. 
Without loss of generality, we can assume that $v_x=v_0$ and $a <b < c$. 

Assume first that $y < a$. The shortest path on $G$ between $v_0$ and $v_a$ cannot contain $v_y$ otherwise $d(v_x,v_a) > d(v_y,v_a)$. Thus, $d(v_0,v_a)=k-a+1$ and similarly $d(v_0,v_b)=k-b+1$ so in particular $d(v_0,b) < d(v_0,a)$. Consider now the path between $v_y$ and $v_b$. If this path passes through $v_a$, then $d(v_y,a)<d(v_y,b)$ and if this path passes through $v_0$ then $d(v_y,b)>d(v_0,b)$. Both cases give a contradiction with the assumption that $S$ does not resolve the pair $(v_0,v_y)$.

Assume now that $a < y <b $. If the shortest path between $v_0$ and $v_b$ passes through $v_y$ then $d(v_0,v_b)>d(v_y,v_b)$ gives a contradiction. Thus, $d(v_0,v_b)=k-b+1$ and so $d(v_0,v_c)=k-c+1$. Similarly if the path between $v_y$ and $v_b$ passes through $v_0$ then $d(v_y,v_b)>d(v_0,v_b)$. So $d(v_y,v_b)=b-y$ and $d(v_y,v_c)=c-y$. We get $b-y=k-b+1$ and $c-y=k-c+1$ which is impossible since $b\neq c$.

The two last cases, $b < y <c $ and $y>c$, are respectively symmetric to the cases  $a < y <b $ and $y<a$.
\end{proof}

The following result has been stated in \cite{eroh2017comparison} but the proof contains a flaw. We provide a corrected version of the proof in Appendix~\ref{appendix}. It bounds the variation of the metric dimension when an edge is deleted in some conditions.

\begin{restatable}{lemme}{lemmevardim}
\label{lemme:vardim}
Let $G=(V,E)$ be a graph and $C$ be a cycle of $G$. Let $V(C)=\{v_0,v_1,...,v_k\}$ be the vertices of $C$. Denote by $G_i=(V_i,E_i)$ the connected components of the vertex $u_i$ in $G\setminus E(C)$. If, for every $i \ne j$, $V_i \cap V_j = \emptyset$, then, for any $e \in E(C)$, $\dim(G) \leq \dim(G-e)+1$.
\end{restatable}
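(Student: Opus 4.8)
The plan is to start from a metric basis $S'$ of $G-e$ and show that adding one well-chosen vertex yields a resolving set of $G$, which gives $\dim(G)\le \dim(G-e)+1$. Write $e=v_kv_0$ (relabeling the cycle so that $e$ joins the two ``ends'' of the path $v_0v_1\cdots v_k$ that remains in $G-e$). Let $S=S'\cup\{v_0\}$; I expect that in fact one of the two endpoints of $e$ does the job. The key structural feature we will exploit is the hypothesis that the components $G_i$ hanging off the cycle are pairwise vertex-disjoint: this means that any path in $G$ that ``changes component'' must pass through the cycle $C$, so shortest paths in $G$ decompose cleanly into a piece inside some $G_i$, a piece along $C$, and a piece inside some $G_j$. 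In particular, for vertices in different components the distance in $G$ differs from the distance in $G-e$ only through the choice of which arc of $C$ is traversed.

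First I would record the basic distance dictionary. For $x\in V_i$ and $y\in V_j$ with $i\le j$, a shortest $x$--$y$ path in $G$ is $d_{G_i}(x,v_i)+\big(\text{length of a shortest }v_i\text{--}v_j\text{ path on }C\text{ in }G\big)+d_{G_j}(v_j,y)$, and the only difference between $G$ and $G-e$ is that in $G$ the arc of $C$ may use the edge $e$, shortening the $v_i$--$v_j$ distance; in $G-e$ only the arc $v_i v_{i+1}\cdots v_j$ is available. So $d_{G-e}(x,y)\ge d_G(x,y)$ always, with equality unless a shortest route genuinely uses $e$. Next I would take a pair $(x,y)$ that is \emph{not} resolved by $S$ in $G$ and derive a contradiction. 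Since $S'\subseteq S$ already resolves every pair in $G-e$, if $(x,y)$ is unresolved in $G$ then the passage to $G$ must have destroyed some separation, i.e.\ there is $s\in S'$ with $d_{G-e}(s,x)\ne d_{G-e}(s,y)$ but $d_G(s,x)=d_G(s,y)$; this forces a shortest path from $s$ to (at least) one of $x,y$ to use $e$, hence to run through both $v_0$ and $v_k$. Combining this with the fact that $v_0\in S$ and that a shortest $v_0$--$x$ path uses $e$ iff it goes the ``short way'' around through $v_k$, one pins down where $x$ and $y$ sit relative to the arcs of $C$.

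The main case analysis, and the place I expect the real work, is when $x$ and $y$ lie in components attached to the two arcs of $C$ in a roughly symmetric position, so that the ``cheap'' route for one of them through $e$ exactly cancels the distance advantage of an $s\in S'$. Here I would use the reference vertex $v_0$ together with a second vertex of $S'$ of the same flavour: Lemma~\ref{technical} is the natural tool --- if $s$ and $t$ both lie on a common shortest path between $x$ and $y$ then at least one resolves $(x,y)$ --- and Lemma~\ref{cycle1}/Lemma~\ref{cycle3} handle the sub-case where $x,y$ themselves (or their projections $v_i,v_j$) live on $C$, since three cycle vertices, or two in the odd case, always resolve $C$. The bookkeeping --- that distances ``wrap'' around $C$ exactly once and that $d_G$ on $C$ respects the ``no shortcut'' hypothesis implicitly forced by minimality --- is routine but fiddly; the crux is to show that adding the single vertex $v_0$ (equivalently, recovering the information lost by deleting $e$) suffices to re-resolve every such pair, which is exactly where the disjointness of the $G_i$'s is indispensable, as it guarantees there is no alternative shortcut elsewhere in $G$ that $v_0$ could fail to see.
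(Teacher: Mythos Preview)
Your proposal has a genuine gap: adding an endpoint of $e$ to a metric basis $S'$ of $G-e$ need not give a resolving set of $G$, and in fact neither endpoint works in general. Take $C$ to be the $4$-cycle $v_0v_1v_2v_3$, set $e=v_0v_3$, and attach a single pendant vertex $p$ to $v_3$. Then $G-e$ is the path $v_0v_1v_2v_3p$, and $S'=\{v_0\}$ is a metric basis of it. Now $S'\cup\{v_0\}=\{v_0\}$ does not resolve $G$ (both $v_1$ and $v_3$ are at distance $1$ from $v_0$), and $S'\cup\{v_3\}=\{v_0,v_3\}$ does not resolve $G$ either, since $v_2$ and $p$ both have distance vector $(2,1)$ with respect to $(v_0,v_3)$. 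So the heuristic ``one of the two endpoints of $e$ does the job'' already fails on this five-vertex example; by contrast, adding $v_1$ (which is what the paper's construction would do here) yields $\{v_0,v_1\}$, a genuine resolving set of $G$.

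The paper's proof chooses the extra vertex differently, and the choice is the crux. It first disposes of the case where $S'$ lies entirely in one component $G_i$ (then the rest of $G-e$ is forced to be a path, and one adds the cycle-neighbour of $v_i$). Otherwise $S'$ meets at least two components $G_i,G_j$, and the new vertex $\alpha_0$ is taken on $C$ so that among the three roots some pair sits at cycle-distance $\lfloor k/2\rfloor$, i.e.\ roughly antipodal. This antipodality is precisely what makes the computation go through: the triple of reference vertices then resolves every pair lying in \emph{different} components, via a calculation in the spirit of Lemma~\ref{cycle3}, while pairs within the \emph{same} component are still handled by $S'$ because their distances to any $s\in S'$ factor through the common root $v_i$ identically in $G$ and in $G-e$. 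Your sketch appeals to Lemma~\ref{technical}, but that lemma concerns two vertices lying on a shortest $x$--$y$ path, not on a shortest $s$--$x$ path, so it does not apply in the way you describe; and invoking Lemma~\ref{cycle3} requires three suitably placed cycle vertices, which adding a single endpoint of $e$ simply does not supply.
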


The following lemma is a well-known fact about twins and resolving sets.

\begin{lemme}\label{twins}
Let $u$ and $v$ be two twins of a graph $G$. Any resolving set $S$ of~$G$ verifies $S \cap \{u,v\} \neq \emptyset$.
\end{lemme}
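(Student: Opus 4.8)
The statement to prove is Lemma~\ref{twins}: if $u,v$ are twins in $G$, then every resolving set $S$ satisfies $S\cap\{u,v\}\neq\emptyset$. The natural approach is a short proof by contradiction: assume $S$ is a resolving set with $u,v\notin S$, and show that $u$ and $v$ cannot be resolved by any vertex of $S$, contradicting the definition of a resolving set.

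The key observation is that if $u$ and $v$ are twins, then for every vertex $w\notin\{u,v\}$ we have $d(w,u)=d(w,v)$. I would argue this as follows. Since $u,v$ are twins, $N(u)\setminus\{v\}=N(v)\setminus\{u\}$; call this common set $A$. First handle the (easy) case $w\in A$: then $d(w,u)=1=d(w,v)$. Now take any $w\notin\{u,v\}\cup A$, i.e. $w$ is non-adjacent to both, and let $P$ be a shortest path from $w$ to $u$, say of length $d(w,u)=\ell\geq 2$. The vertex preceding $u$ on $P$ lies in $N(u)$; since $w\neq v$ and the path has length at least $2$, this predecessor is some vertex $a\neq v$, hence $a\in N(u)\setminus\{v\}=N(v)\setminus\{u\}$, so $a$ is also adjacent to $v$. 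Replacing $u$ by $v$ at the end of $P$ gives a walk from $w$ to $v$ of the same length $\ell$, so $d(w,v)\leq d(w,u)$. By the symmetric argument $d(w,u)\leq d(w,v)$, hence equality. (One should also note the degenerate cases: if $u$ and $v$ are themselves adjacent the argument for $w\in A$ is unchanged, and $A$ could be empty, in which case the graph is disconnected unless $n=2$ — but connectivity is assumed, so $A\neq\emptyset$ and every $w\notin\{u,v\}$ reaches $u,v$ through $A$; the general argument above covers this uniformly.)

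Given this, the conclusion is immediate: if $S$ is a resolving set avoiding both $u$ and $v$, then every $w\in S$ satisfies $d(w,u)=d(w,v)$, so no vertex of $S$ resolves the pair $(u,v)$, contradicting the fact that $S$ resolves $V$. Therefore $S\cap\{u,v\}\neq\emptyset$.

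The argument is routine and there is no real obstacle; the only point requiring a little care is the path-surgery step establishing $d(w,u)=d(w,v)$ for $w\notin\{u,v\}$, and in particular making sure the predecessor of $u$ on a shortest $w$--$u$ path is distinct from $v$ (which holds because that path is shortest and $w\neq v$, so it does not pass through $v$ just before reaching $u$ unless $\ell=1$, a case handled separately). Everything else is unwinding definitions.
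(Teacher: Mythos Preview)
Your proof is correct. The paper does not actually prove this lemma; it merely states it as ``a well-known fact about twins and resolving sets,'' so there is no argument to compare against, and yours is the standard one.

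One small simplification: you do not need to argue that the predecessor $a$ of $u$ on a shortest $w$--$u$ path is distinct from $v$. If it happened that $a=v$, then $v$ would lie on a shortest $w$--$u$ path and hence $d(w,v)\le d(w,u)-1<d(w,u)$, which already yields the inequality $d(w,v)\le d(w,u)$ you want; the symmetric argument then gives $d(w,u)\le d(w,v)$ and hence equality. So the case $a=v$ is harmless, and the justification you offer for $a\neq v$ (``since $w\neq v$ and the path has length at least $2$'') is not actually the reason---that alone does not preclude $v$ from being an internal vertex of the path.
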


Lemma~\ref{dimbranch} is a crucial observation for studying resolving sets in particular in trees.

\begin{lemme}\label{dimbranch}
Let $G=(V,E)$ be a connected graph, $u$ be a vertex of $G$ and $S$ be a resolving set of~$G$. At most one connected component of $G \setminus \{u\}$ does not contain any vertex of $S$.
\end{lemme}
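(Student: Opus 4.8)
The statement I want to prove is: if $S$ is a resolving set of a connected graph $G$ and $u$ is any vertex, then at most one connected component of $G \setminus \{u\}$ contains no vertex of $S$.

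Let me think about how to prove this cleanly. Suppose for contradiction that two distinct components $A$ and $B$ of $G \setminus \{u\}$ both avoid $S$. I want to exhibit a pair of vertices that $S$ fails to resolve. The natural candidates are vertices $a \in A$ and $b \in B$ chosen so that they are "equidistant" from $u$ in a strong sense — ideally $a \in A$ with $a$ adjacent... no, that's not quite enough. The key structural fact is that every path from a vertex of $A$ to a vertex outside $A \cup \{u\}$ must pass through $u$, since $A$ is a connected component of $G\setminus\{u\}$ (so $u$ is a cut vertex separating $A$ from $B$). Hence for any $w \notin A$ and any $a \in A$, $d(w,a) = d(w,u) + d(u,a)$, and symmetrically for $B$.

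So here is the plan. Pick $a$ to be a vertex of $A$ at minimum distance from $u$; since $A$ is nonempty and $u$ has a neighbor in $A$, in fact $d(u,a)=1$ works, but actually I just want *some* $a\in A$ and *some* $b\in B$ with $d(u,a)=d(u,b)$. Hmm — that equality need not hold in general. Let me reconsider: I should instead choose $a \in A$ and $b \in B$ with $d(u,a) = d(u,b)$, which I can do by taking both to be neighbors of $u$ (every component of $G\setminus\{u\}$ has a vertex adjacent to $u$ because $G$ is connected). Then for any $s \in S$: if $s \in A$, then $s \notin B$ so $d(s,b) = d(s,u) + d(u,b) = d(s,u)+1$; and $s$ might be close to $a$. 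That doesn't immediately give $d(s,a)=d(s,b)$. The fix: note $s\notin A$ for the components we care about — wait, $A$ and $B$ are the components avoiding $S$, so $s$ lies in neither $A$ nor $B$. Therefore $s \notin A$ gives $d(s,a) = d(s,u)+d(u,a) = d(s,u)+1$, and $s\notin B$ gives $d(s,b) = d(s,u)+d(u,b) = d(s,u)+1$. Hence $d(s,a) = d(s,b)$ for every $s \in S$, so $S$ does not resolve the pair $(a,b)$, contradicting that $S$ is a resolving set.

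The only genuine point requiring care is the claim that every component of $G\setminus\{u\}$ contains a neighbor of $u$ — this is immediate from connectivity of $G$ (a component with no neighbor of $u$ would be disconnected from the rest of $G$) — and the distance decomposition $d(s,a) = d(s,u) + d(u,a)$ when $s$ and $a$ lie in different components of $G\setminus\{u\}$, which holds because $u$ separates them so every $s$–$a$ path visits $u$. I expect no real obstacle here; the lemma is elementary, and the whole argument is a short contradiction once the separator observation is in place. I would write it in three or four sentences.
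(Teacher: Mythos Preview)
Your proof is correct and follows essentially the same approach as the paper: assume two components $A$ and $B$ of $G\setminus\{u\}$ avoid $S$, pick neighbors $a\in A$ and $b\in B$ of $u$, and observe that every $s\in S$ satisfies $d(s,a)=d(s,u)+1=d(s,b)$, contradicting that $S$ is resolving. The paper's version is terser but identical in substance.
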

\begin{proof}
Assume by contradiction that two connected components $G_i$ and $G_j$ of $G \setminus \{x\}$ do not contain any vertex of $S$. Let $v \in V$ and $w \in V(G_j)$ be two vertices incident to $u$. Then, no vertex in $S$ resolves the pair $(v,w)$ since, for every $s \in S$, $d(s,v)=d(s,w)=d(s,u)+1$.
\end{proof}

\section{Bounds for the metric dimension}\label{section3}
%\subsection{The theorem and its implications}
\begin{defn}\label{defboundmin}
Let $G$ be a graph. Recall that $L(G)=\sigma(G)-ex(G)$. If $G$ is a path $P_n$ for some $n \geq 1$, let $L(G)=1$ (so $L(T)=\dim(T)$ for all trees).
\end{defn}

The goal is to prove Theorem~\ref{thm:mainbis} we recall here:
\mainbisstate*

This result implies the following one:
\corST*

\begin{proof}
For any graph $G=(V,E)$ that is not a tree and any edge $e \in E$ such that $G-e$ is connected, $L(G) \leq L(G-e)$. Indeed if a major vertex $v$ has terminal degree $d \geq 2$ in $G$, then $v$ is still a major vertex in $G-e$ of terminal degree at least $d$. So $L(G) \leq L(G-e)$ and then, for $T$ a spanning tree of $G$, $L(G) \leq L(T)$. As $\dim(G) \leq L(G) + 6c(G) $ and $\dim(T)=L(T)$ we get $\dim(G) \leq \dim(T)+6c(G)$.
\end{proof}

The rest of the section is devoted to prove Theorem~\ref{thm:mainbis}.

\subsection{Construction of the resolving set}

If $c(G)=0$ then $G$ is a tree and $\dim(G)=L(G)$ by Lemma \ref{dimtree}. If $c(G)=1$ let us prove a stronger result.

\begin{lemme}\label{lem:1cycle}
Let $G=(V,E)$ be a connected unicyclic graph. Then $\dim(G) \leq L(G)+3$.
\end{lemme}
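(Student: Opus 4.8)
The plan is to reduce the unicyclic case to the tree case by carefully analyzing what happens to the terminal/major structure when we cut one edge of the unique cycle $C$. Let $C = v_0 v_1 \cdots v_k v_0$ be the unique cycle of $G$, and for each $i$ let $G_i$ be the connected component of $v_i$ in $G \setminus E(C)$; these components are pairwise vertex-disjoint since $G$ is unicyclic. First I would handle a trivial reduction: every vertex of $C$ of degree $2$ in $G$ that is not a ``branching'' vertex can be suppressed (it only lies on the cycle), so up to this cleanup we may assume the vertices of $C$ with a nontrivial hanging tree $G_i$ together with at most a bounded number of others control the structure. The key object is the cycle as it interacts with $L$: on a cycle, a minimal resolving set has size close to $2$, while $L$ ``sees'' none of the cycle.

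The main step is to build the resolving set explicitly. Pick an edge $e = v_0 v_k$ of $C$ and let $T = G - e$, a spanning tree. By Lemma~\ref{dimtree}, $T$ has a resolving set $S_T$ of size $L(T)$ consisting of all-but-one terminal vertices of each major vertex of $T$. The idea is to take $S = S_T$ together with at most $3$ extra well-chosen vertices on or near $C$ (for instance, $v_0$, $v_k$, and one more vertex of $C$), and argue that $S$ resolves $G$. The delicate accounting is comparing $L(T)$ with $L(G)$: cutting $e$ can create new terminal vertices on the cycle (the endpoints $v_0, v_k$ may become degree-$1$ in $T$, or degree-$2$ vertices on $C$ may become exterior), so $L(T)$ can exceed $L(G)$, and I must show this excess is at most $3 - (\text{number of extra vertices actually needed})$, i.e. that $L(T) + (\text{extras}) \le L(G) + 3$. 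Concretely, the excess $L(T) - L(G)$ is controlled by how many ``new'' pending paths appear on the cycle after the cut, and by a short case analysis on the degrees of $v_0$ and $v_k$ in $G$ (degree $2$, degree $3$, or degree $\ge 4$) one checks this excess is at most $2$, leaving room for the extra cycle-vertices. Alternatively — and this is probably the cleaner route — one invokes Lemma~\ref{lemme:vardim} directly: since the components $G_i$ are pairwise disjoint, $\dim(G) \le \dim(G - e) + 1 = \dim(T) + 1 = L(T) + 1$, and then it only remains to bound $L(T) \le L(G) + 2$.

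So the real content reduces to the inequality $L(T) \le L(G) + 2$ where $T = G - e$ for a well-chosen edge $e$ of $C$. Here I would choose $e$ cleverly: if some vertex $v_i$ of $C$ has degree $\ge 3$ in $G$, route the cut so that at least one endpoint of $e$ retains degree $\ge 2$ in $T$ and does not become a new exterior major vertex; if all of $C$ has degree exactly $2$ in $G$ (so $G$ is a cycle with paths... no, then $G = C$ and $\dim(G) \le 2 = L(C_k)+\text{something}$, handled directly), the statement is immediate. In the general case, cutting $e = v_0 v_k$ turns $v_0$ and $v_k$ into vertices whose cycle-degree drops by one; the worst case is that each of $v_0, v_k$ becomes a new terminal vertex of some major vertex (contributing $+1$ to $\sigma$ each) without creating a new exterior major vertex, or becomes itself newly major — a short bookkeeping argument bounds the net change in $\sigma - ex$ by $2$.

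The step I expect to be the main obstacle is precisely this structural bookkeeping: tracking how the terminal degree $\ter(\cdot)$ and the set of exterior major vertices change along the two endpoints of the removed edge, including the annoying boundary cases where cutting $e$ makes a formerly-interior degree-$2$ vertex exterior, merges the roles of two would-be major vertices, or turns a path-component $G_i$ into a pending path that changes the terminal degree of a vertex off the cycle. Handling all these cases uniformly — rather than via a long enumeration — is the crux, and choosing the edge $e$ of $C$ adaptively (to avoid the worst cases) is the trick that keeps the constant at $3$ rather than something larger.
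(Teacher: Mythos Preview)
Your ``alternative'' route is exactly the paper's proof: take any edge $e$ of the cycle, apply Lemma~\ref{lemme:vardim} to get $\dim(G) \le \dim(T)+1 = L(T)+1$, and use $L(T) \le L(G)+2$. Two simplifications relative to your write-up: the paper does not choose $e$ adaptively (any edge of $C$ works, since removing one edge creates at most two new pending-path directions, each contributing at most $+1$ to $L$), and the paper simply asserts $L(T)\le L(G)+2$ without the extended bookkeeping you anticipate---so your initial explicit-construction plan and the ``clever choice of $e$'' discussion are unnecessary detours.
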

\begin{proof}
Let $uv$ be an edge of the cycle. Let $T=G-e$, then $\dim(T)=L(T)$ by Lemma \ref{dimtree} and $\dim(G) \leq \dim(T)+1$ by Lemma \ref{lemme:vardim}. As $L(T) \leq L(G)+2$ we get the inequality.
\end{proof}

 We now focus on the case $c(G) \geq 2$.
The first part of the proof will consist in defining a subset $S$ of vertices. We then prove in the second part of the proof that it is, indeed, a resolving set.
In order to build this set $S$, we first find a small subset of vertices $M$ such that $G \setminus M$ is a forest and each connected component of $G \setminus M$ has at most two edges incident to $M$. We then construct the set $S$.

 Let us start with a simple lemma.

\begin{lemme}

Let $G$ be a connected graph with no vertex of degree $1$ that is not an induced cycle. There exists a feedback vertex set $X$ of size $\tau(G)$ containing only vertices of degrees at least $3$.

\end{lemme}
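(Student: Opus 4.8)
The plan is to start from a feedback vertex set $Y$ of size $\tau(G)$, which exists by definition, and then show that one can replace every vertex of $Y$ that has degree $1$ or $2$ in $G$ by a vertex of degree at least $3$ without increasing the size and while keeping the feedback vertex property. First I would fix a minimum feedback vertex set $Y$ and suppose $y \in Y$ has $\deg_G(y) \le 2$. If $\deg_G(y) \le 1$, then $y$ lies on no cycle of $G$, so $Y \setminus \{y\}$ is already a (smaller) feedback vertex set, contradicting minimality; hence in fact every vertex of a minimum FVS has degree at least $2$, and the only case to handle is $\deg_G(y) = 2$.

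So suppose $\deg_G(y) = 2$ with neighbours $a$ and $b$. The key observation is that $y$ is an internal vertex of a maximal path $P$ of degree-$2$ vertices of $G$; walk away from $y$ along this path in both directions until reaching, in each direction, either a vertex of degree at least $3$ or a vertex that has already been visited. Because $G$ has no vertex of degree $1$ and is not an induced cycle, the walk cannot terminate at a degree-$1$ vertex, and it cannot close up into a cycle consisting only of degree-$2$ vertices (that would be a connected component which is an induced cycle, excluded since $G$ is connected and not an induced cycle). Hence at least one endpoint $z$ of this maximal degree-$2$ path has $\deg_G(z) \ge 3$. Now I claim $Y' := (Y \setminus \{y\}) \cup \{z\}$ is again a feedback vertex set: any cycle of $G$ through $y$ must, by following the degree-$2$ path $P$ in both directions, pass through the endpoints of $P$ — in particular it passes through $z$ (if $z$ is the only degree-$\ge 3$ endpoint, the cycle must use $P$ as a maximal subpath and exit at $z$; if both endpoints have degree $\ge 3$, the cycle containing $y$ traverses all of $P$ and hence passes through $z$). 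So every cycle broken by $y$ is also broken by $z$, and cycles not through $y$ are still broken by $Y \setminus \{y\}$; thus $G \setminus Y'$ is a forest. Since $|Y'| \le |Y| = \tau(G)$ and $|Y'| \ge \tau(G)$ by minimality, $|Y'| = \tau(G)$.

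Iterating this swap — each step strictly decreases the number of low-degree vertices in the FVS, or we can simply apply it simultaneously — yields a feedback vertex set $X$ of size $\tau(G)$ all of whose vertices have degree at least $3$. The main obstacle is the bookkeeping in the swap argument: one must be careful that after replacing $y$ by $z$, the new vertex $z$ is not itself a degree-$\le 2$ vertex (it is not, by construction $\deg_G(z)\ge 3$) and that the replacements do not interfere with one another; the cleanest way is to process the degree-$2$ vertices of the FVS one at a time, noting that each replacement introduces only a degree-$\ge 3$ vertex and never touches previously-fixed vertices, so the process terminates after at most $\tau(G)$ steps. A subtle point to get right is the case where the maximal degree-$2$ path $P$ "wraps around" so that its two endpoints coincide at a single vertex $z$ of degree $\ge 3$ with a loop-like attachment; this still works since any cycle through $y$ passes through that common endpoint $z$.
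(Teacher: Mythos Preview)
Your proposal is correct and follows essentially the same approach as the paper: replace any degree-$2$ vertex of a minimum feedback vertex set by a nearby vertex of degree at least $3$ obtained by following the maximal path of degree-$2$ vertices. The only cosmetic difference is that the paper packages this as an extremal argument (take a minimum FVS with the fewest low-degree vertices and derive a contradiction), whereas you phrase it as an explicit iterative swap procedure; the underlying replacement step and its justification are identical.
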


\begin{proof}
Let $X$ be a minimum feedback vertex set with the minimum number of vertices of degrees less than $3$. Note that $X$ does not contain vertices of degree $1$.
Assume by contradiction that $X$ contains a vertex $x$ of degree $2$. Let $P$ be the maximal path of vertices of degree $2$ containing $x$. Since $G$ is not a cycle (and is not acyclic otherwise $X$ would be empty), $P$ does not contain the whole graph. Let $y$ be an endpoint of $P$ adjacent to a vertex $z$ of $V \setminus P$. Let $X'=X \setminus \{x\} \cup \{z\}$. The set $X'$ is still a feedback vertex set, a contradiction with the minimality of $X$.  
\end{proof}

Let $X$ be a feedback vertex set of $G$ only containing vertices of degrees at least $3$ in the graph where all the vertices of degree $1$ have been iteratively removed\footnote{Note that we can assume that the resulting graph is not a cycle since otherwise the graph is unicyclic and the conclusion follows by Lemma~\ref{lem:1cycle}.}.
Let $G_1,G_2,...,G_k$ be the connected components of $G \setminus X$. Note that each $G_i$ is a tree. For each $G_i$, let $X_i \subseteq X$ be the set of vertices of $X$ connected to at least one vertex of $G_i$. Let $N_i \subseteq V(G_i)$ be the set of vertices in $G_i$ adjacent to (at least) one vertex of $X_i$.

Let $T_i$ be the minimal subtree of $G_i$ containing the vertices of $N_i$. In other words, $T_i$ is the subtree of $G_i$ restricted to the union of the paths between $a$ and $b$ for any pair $a,b \in N_i$. Let $T'_i$ be the tree built from $T_i$ by adding to each vertex $u \in N_i$, $|N(u)\cap X|$ pending degree $1$ vertices. Let $M_i$ be the set of vertices in $T'_i$ of degree at least $3$ and $M:=X \cup (\bigcup_{i=1}^k M_i)$. Figure \ref{fig:notation} illustrates these notations.

\begin{figure}[ht]
    \centering
    \includegraphics[scale=1]{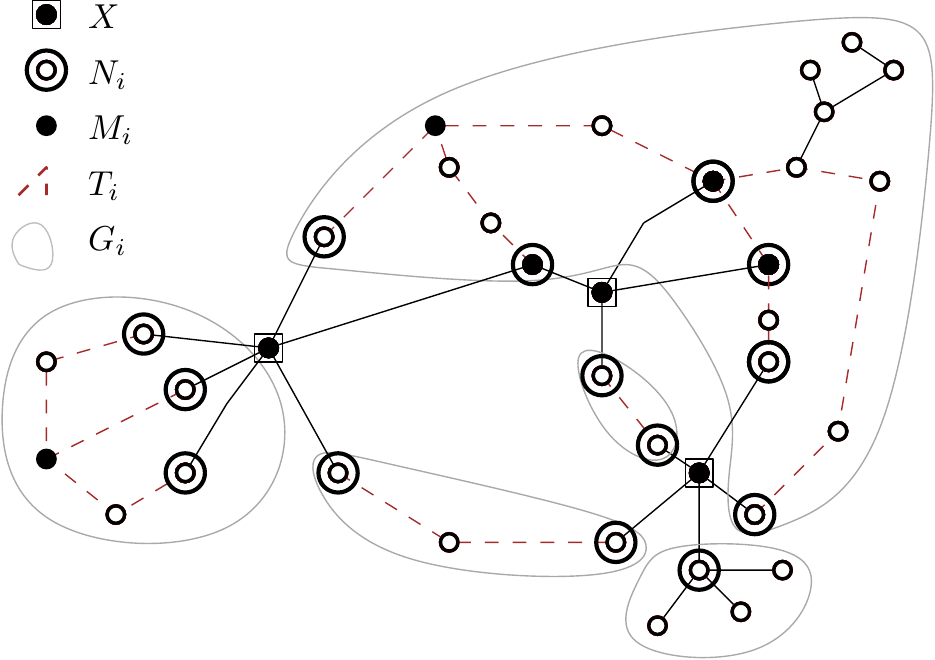}
    \caption{Illustration of the notations $X,N_i,M_i,T_i$ and $G_i$.}
    \label{fig:notation}
\end{figure}

\begin{lemme}\label{structure2}
For each connected component $H$ of $G \setminus M$ there are at most two edges in $G$ between $H$ and $G \setminus H$.
\end{lemme}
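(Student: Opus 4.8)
The plan is to reduce the whole statement to the structure of a single tree component. First I would observe that since $X \subseteq M$, every connected component $H$ of $G \setminus M$ lies inside a single component $G_i$ of $G \setminus X$, and that $M \cap V(G_i) = M_i$: the vertices of $X$ avoid $G_i$, and the vertices of $M_j$ for $j\neq i$ are non-pendant vertices of $T'_j$, hence lie in $T_j \subseteq G_j$. Consequently $H$ is a connected component of the tree $G_i \setminus M_i$, so $H$ induces a subtree. The edges of $G$ leaving $H$ then split into two types: those that stay inside $G_i$, which necessarily go to $M_i$ because $H$ is a full component of $G_i \setminus M_i$; and those that leave $G_i$, which by definition of $N_i$ all run from $H \cap N_i$ to $X$. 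Writing $e_1$ and $e_2$ for the numbers of edges of each type, the claim becomes $e_1 + e_2 \le 2$.

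Before the main argument I would record three facts about $T_i \subseteq T'_i$: (a) every leaf of $T_i$ lies in $N_i$, by minimality of $T_i$; (b) every vertex of $T_i$ of degree at least $3$ in $T_i$ lies in $M_i$ (its degree cannot decrease in $T'_i$), so every component of $T_i \setminus M_i$ is a path; and (c) if $v \in N_i \setminus M_i$ then $\deg_{T_i}(v) + |N(v) \cap X| \le 2$, for otherwise $v$ would have degree at least $3$ in $T'_i$.

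The heart of the proof is a split on whether $V(H)$ meets $V(T_i)$. If it does not, then $H$ touches no vertex of $N_i \subseteq V(T_i)$, so $e_2 = 0$, and $H$ cannot send two edges to $M_i \subseteq V(T_i)$ without creating a cycle between the disjoint subtrees $H$ and $T_i$ of the tree $G_i$, hence $e_1 \le 1$. If $V(H)$ does meet $V(T_i)$, then $H' := V(H) \cap V(T_i)$ is a subtree (an intersection of two subtrees of a tree) disjoint from $M_i$, so it lies in a single path-component $Q$ of $T_i \setminus M_i$; in fact $H' = Q$, since every vertex of $Q$ is joined to $H'$ within $G_i \setminus M_i$ and therefore lies in $H$. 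I would then show: (i) the only inside-$G_i$ edges from $H$ to $M_i$ are edges of $T_i$ from the (at most two) endpoints of the path $Q$ to $M_i$ — a vertex of $H \setminus V(T_i)$ adjacent to $M_i$ would force the unique $G_i$-path from it to $Q$ to pass through a vertex of $M_i$, contradicting that $H$ avoids $M_i$; and (ii) $H \cap N_i = Q \cap N_i$ consists only of endpoints of $Q$ that are leaves of $T_i$, each contributing exactly one edge to $X$ by (a) and (c). Walking along the two endpoints of $Q$ (with the degenerate sub-cases "$Q$ a single vertex" and "$T_i$ a single vertex" checked directly), each endpoint contributes exactly one unit to $e_1 + e_2$: it is either a leaf of $T_i$ feeding one edge into $X$, or a degree-$2$ vertex of $T_i$ whose second neighbor lies in $M_i$, feeding one edge into $M_i$. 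Hence $e_1 + e_2 = 2$ in the generic case and $e_1+e_2\le 2$ in the degenerate ones.

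I expect the main obstacle to be part (i): ruling out that the ``extra'' vertices of $H$ lying in $G_i \setminus T_i$ create new edges to $M_i$. This is exactly what lets one pretend $H$ behaves like the path $Q$ inside $T'_i$, and it is where the tree structure of $G_i$ together with the minimality of $T_i$ are used essentially. Once it is established, the bound $e_1 + e_2 \le 2$ is just the elementary fact that a path-component of a tree meets the branch vertices in at most two edges, transferred through the pendant-vertex bookkeeping built into the definition of $T'_i$.
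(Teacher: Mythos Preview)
Your argument is correct. The paper takes a more compressed route: instead of splitting on whether $H$ meets $T_i$ and analysing the path $Q = H \cap V(T_i)$, it builds an auxiliary tree $U_H$ --- the minimal subtree of $H$ spanning the vertices that have a neighbour outside $H$, with one fresh pendant attached for each leaving edge --- and asserts that $U_H$ is (isomorphic to) a subgraph of $T'_i$. Since the non-pendant vertices of $U_H$ lie in $H$ and hence avoid $M_i$, $U_H$ has no vertex of degree $\geq 3$, so it is a path and its at most two leaves correspond to the at most two edges leaving $H$. Your proof unfolds the same mechanism explicitly: the role of the paper's $U_H$ is played by your path $Q$ together with the edges from its endpoints into $M_i$ or $X$, and your claim (i) is precisely what is needed to justify the paper's one-line assertion that $U_H$ sits inside $T'_i$ when $H$ contains vertices outside $T_i$. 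The paper's version is shorter and avoids your case distinction; your version makes the degenerate situations ($H\cap T_i = \emptyset$, $|Q|=1$, $|T_i|=1$) visible and verifies them directly.
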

\begin{proof}
Let $U_H$ be the minimal subtree of $H$ containing all the vertices incident to an edge between $H$ and $G \setminus H$. Then, for each edge between a vertex $v \in H$ and a vertex in $G \setminus H$, add one new vertex in $U_H$ adjacent to $v$. Let us still denote by $U_H$ the resulting graph. Note that $U_H$ has as many degree $1$ vertices as edges leaving $H$. So, there are at most two edges with exactly one endpoint in $H$ if and only if $U_H$ has no vertex of degree three. The graph $U_H$ is isomorphic to a subgraph of $T'_i$, then, by definition of $T'_i$, $U_H$ does not contain vertex of degree three.

\end{proof}

Lemma~\ref{structure2} indeed implies the following.

\begin{cor} 
Every connected component of $G \setminus M$ is connected to at most two vertices of $M$.
\end{cor}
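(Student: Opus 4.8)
The plan is to deduce this corollary directly from Lemma~\ref{structure2} by an elementary edge-counting argument; no new idea is needed beyond bookkeeping. First I would recall that $G \setminus M$ denotes the subgraph of $G$ induced by $V \setminus M$, so any connected component $H$ of $G \setminus M$ has $V(H) \cap M = \emptyset$. In particular every vertex of $M$ lies in $V(G) \setminus V(H)$, i.e.\ in the part of the graph ``outside'' $H$, which means that any edge of $G$ joining $H$ to a vertex of $M$ is in particular an edge of $G$ between $H$ and $G \setminus H$.

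Next I would argue as follows. Suppose, for contradiction, that $H$ is connected to three pairwise distinct vertices $m_1, m_2, m_3$ of $M$. For each $i$, since $m_i$ is adjacent to some vertex of $H$, choose an edge $m_i h_i$ of $G$ with $h_i \in V(H)$. Each such edge has exactly one endpoint in $H$ (namely $h_i$) and its other endpoint $m_i$ outside $H$; since an edge crossing the cut $(V(H), V(G)\setminus V(H))$ is determined by its unique endpoint outside $H$, and the $m_i$ are pairwise distinct, the three edges $m_1h_1, m_2h_2, m_3h_3$ are pairwise distinct. Hence there are at least three edges of $G$ between $H$ and $G \setminus H$, contradicting Lemma~\ref{structure2}. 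Therefore $H$ is connected to at most two vertices of $M$, which is the claim.

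The only point that requires a moment's attention — and the closest thing to an obstacle, though it is not a serious one — is justifying that distinct vertices of $M$ adjacent to $H$ yield distinct crossing edges, and that $M$ sits outside $H$ so that these edges are indeed among the crossing edges bounded by Lemma~\ref{structure2}; both follow immediately from the fact that $H$ is a connected component of the induced subgraph $G \setminus M$.
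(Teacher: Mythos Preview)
Your argument is correct and is exactly the trivial deduction the paper intends: the paper simply writes ``Lemma~\ref{structure2} indeed implies the following'' and gives no further proof, and you have merely spelled out that implication. One small wording quibble: the phrase ``an edge crossing the cut is determined by its unique endpoint outside $H$'' says the wrong thing (that would mean each outside vertex is incident to at most one crossing edge, which need not hold); what you actually use, and what is true, is that each crossing edge has a \emph{unique} endpoint outside $H$, so edges with distinct outside endpoints are distinct. The conclusion is unaffected.
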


A connected component of $G\setminus M$ can be attached to $M$ in three different ways, called {\em Types}, illustrated in Figure~\ref{figexh2}. A connected component of $G \setminus M$ has \emph{Type $A$} (respectively \emph{Type $B$}) if there are exactly two edges between $H$ and $M$ with distinct endpoints in $H$ and such that their endpoints in $M$ are distinct (resp. the same).
A component $H$ has \emph{Type $C$} if all the edges of $G$ between $H$ and $M$ have the same endpoint in $H$ (but possibly distinct endpoints in $M$). 

Let $H$ be a connected component of $G \setminus M$ of Type $A$ or $B$ and let $x$ and $y$ be the two endpoints in $M$ of the edges between $H$ and $M$. Let $\rho_H$ be one vertex on the path in $H$ between $x$ and $y$ such that $|d_H(x,\rho_H)-d_H(y,\rho_H)| \leq 1$. In other words, $\rho_H$ is one of the vertices in the middle of the path between $x$ and $y$ in $H$. Let $P$ be the set of the vertices $\rho_H$ for all the connected components $H$ of \emph{Type} $A$ and $B$.
 
\begin{figure}[ht]
    \centering
    \includegraphics[scale=0.6]{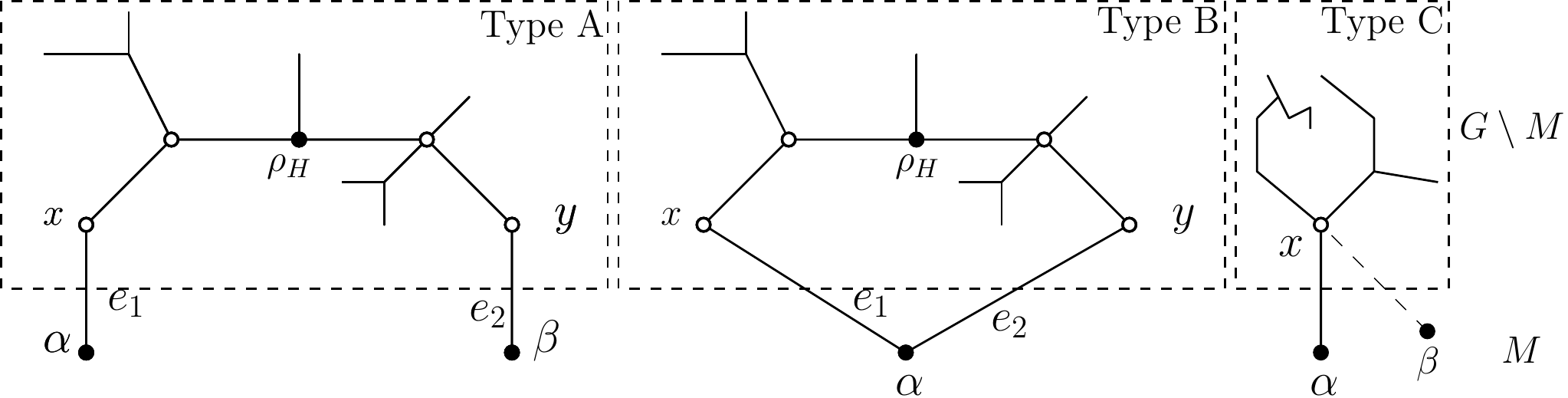}
    \caption{The three \emph{Types} of connected components of $G\setminus M$.  }
    \label{figexh2}
\end{figure}

\begin{lemme}\label{use_of_p2}
Let $H$ be a connected component of $G \setminus M$ of Type $A$ or $B$. Let $\rho$ be the vertex of $P \cap H$. Then, for any vertex $v \in H$, there is no shortest path between $v$ and $\rho$ using vertices in $G \setminus H$. Moreover, for every $z \in \partial H$, there is a shortest path between $z$ and $\rho$ using only vertices in $H \cup \{z\}$.
\end{lemme}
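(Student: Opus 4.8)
The plan is to exploit the defining property of $\rho=\rho_H$: it sits (nearly) in the middle of the $H$-path between the two attachment vertices $x$ and $y$, so shortcuts through the rest of the graph cannot help.

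\textbf{Setup.} Let $H$ have Type $A$ or $B$, with attachment edges leaving $H$ only at $x$ and $y$ (the endpoints in $M$), and recall that by Lemma~\ref{structure2} these are the \emph{only} two edges between $H$ and $G\setminus H$. Write $a:=d_H(x,\rho)$ and $b:=d_H(y,\rho)$, so $|a-b|\le 1$ by the choice of $\rho$. A path in $G$ from a vertex $v\in H$ to $\rho$ that leaves $H$ must do so through the single edge at $x$ or the single edge at $y$; to come back to $\rho\in H$ it must re-enter $H$ through $x$ or through $y$. So any such ``detour'' path has length at least $d_H(v,x)+2+d_{G\setminus H}(x,\ast)+\cdots \ge d_H(v,x)+d_H(x,\rho)+2$ in the Type~$B$ case (leave and return at the same vertex, wasting at least two edges), or at least $d_H(v,x)+ (\text{length of a }x\!-\!y\text{ path outside }H) + d_H(y,\rho)$ in the Type~$A$ case.

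\textbf{Key comparison.} First I would handle Type~$B$. Here any detour path from $v$ to $\rho$ has length $\ge d_H(v,x)+d_H(x,\rho)+2 \ge d_H(v,\rho)$ is not quite what we want — instead compare with the internal path $v\to x\to\rho$ of length $d_H(v,x)+a$ (or $v\to y\to\rho$ of length $d_H(v,y)+b$), both of which stay inside $H$; the detour is strictly longer by at least $2$, so no shortest $v$–$\rho$ path leaves $H$. For Type~$A$: a detour path uses, say, $x$ to leave and $y$ to re-enter (or vice versa), so its length is at least $d_H(v,x) + \ell + d_H(y,\rho) = d_H(v,x)+\ell+b$, where $\ell\ge 1$ is the length of the traversal in $G\setminus H$. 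Compare with the internal path $v\to x\to\rho$ of length $d_H(v,x)+a$. Since $b\ge a-1$ and $\ell\ge 1$, we get $d_H(v,x)+\ell+b \ge d_H(v,x)+1+(a-1)=d_H(v,x)+a$, so the detour is no shorter than staying inside $H$; thus there is always a shortest $v$–$\rho$ path inside $H$. The second assertion, for $z\in\partial H$, is the special case $v\in N(z)\cap H$: since $z$'s only edge into $H$ goes to some vertex $v_z\in H$ (well, $z$ may be adjacent to several vertices of $H$, but all lie in $H$), a shortest $z$–$\rho$ path is $z$ followed by a shortest $v$–$\rho$ path for the best neighbor $v$, which by the above can be taken inside $H$, giving a shortest $z$–$\rho$ path inside $H\cup\{z\}$.

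\textbf{Main obstacle.} The delicate point is making the detour-length lower bounds airtight: one must argue that any $v$–$\rho$ walk in $G$ that is not contained in $H$ genuinely crosses the cut $\{x$-edge$,\,y$-edge$\}$ an even number of times and hence ``wastes'' at least the amount claimed, and that in the Type~$A$ case the two crossings cost at least $\ell+$ (the two cut edges) with $\ell\ge 1$. This is where Lemma~\ref{structure2} (exactly two edges leaving $H$) is essential, together with the near-median choice of $\rho$; the inequality $|a-b|\le 1$ is used with no slack, so the borderline case $b=a-1$, $\ell=1$ must be checked to give equality (a shortest path of the same length exists inside $H$), which is exactly what the statement allows (``there is no shortest path \emph{using} vertices outside'', i.e. an internal one of at most that length exists). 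I would write the two cases ($A$ and $B$) separately and, within each, split on which attachment vertex $v$'s internal shortest path to $\rho$ uses, taking the cheaper of $v\to x\to\rho$ and $v\to y\to\rho$.
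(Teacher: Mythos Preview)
Your overall approach is the paper's: lower-bound any $v$--$\rho$ path that exits $H$ and compare it with the internal path through $x$, using $|a-b|\le 1$. But two genuine errors need fixing.

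First, you misread the first assertion. ``There is no shortest path between $v$ and $\rho$ using vertices in $G\setminus H$'' means \emph{every} shortest $v$--$\rho$ path lies in $H$; equivalently, any path that leaves $H$ is \emph{strictly} longer than $d_G(v,\rho)$. Your ``borderline case $b=a-1$, $\ell=1$'' with equality would falsify the lemma as stated, not confirm it. (The existential phrasing you have in mind is only what the \emph{second} assertion, about $z\in\partial H$, claims.)

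Second, you undercount the detour, and you have Type~$B$ backwards. In Type~$B$ the two attachment vertices $x,y\in H$ are \emph{distinct}; it is the $M$-endpoint that coincides ($\alpha=\beta$). A simple $v$--$\rho$ path that leaves $H$ must use both cut edges $x\alpha$ and $\beta y$ (it cannot leave and re-enter at the same vertex of $H$ and stay simple), so its length is at least
\[
d_H(v,x)+1+d_G(\alpha,\beta)+1+b.
\]
In Type~$A$ this is $\ge d_H(v,x)+b+3$; in Type~$B$ (where $d_G(\alpha,\beta)=0$) it is $\ge d_H(v,x)+b+2$. Combined with $b\ge a-1$ this gives a \emph{strict} excess over $d_H(v,x)+a\ge d_H(v,\rho)\ge d_G(v,\rho)$, which is precisely what the lemma demands. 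Your bound ``$\ell\ge 1$'' simply dropped the two cut edges; in fact $\ell=1$ is impossible. Once these two points are corrected, your argument and the paper's coincide, and the second assertion follows by the same comparison with $z=\alpha$ (or $\beta$): the internal path $\alpha\to x\to\rho$ has length $1+a$, while any alternative via $\beta$ has length $\ge d_G(\alpha,\beta)+1+b\ge 1+a$.
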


\begin{proof}
Let $x$ and $y$ be the two vertices in $H$ adjacent to a vertex of $M$, $\alpha$ be the vertex of $M$ adjacent to $x$ and $\beta$ the vertex in $M$ adjacent to $y$. By definition of \emph{Types} $A$ or $B$, $x \neq y$ (but $\alpha$ and $\beta$ could be the same vertex if the \emph{Type} is $B$).

By definition of $\rho$, we have $|d_H(x,\rho)-d_H(y,\rho)| \leq 1$. Let $v \in V(H)$. Assume by contradiction that a shortest path between $v$ and $\rho$ in $G$ passes through vertices in $G \setminus H$. So this shortest path between $\rho$ and $v$ passes through $x$ and $y$. 
By symmetry we can assume $d_G(v,\rho)=d_H(v,x)+d(x,\alpha)+d(\alpha,\beta)+d(\beta,y)+d_H(y,\rho)$. In other words, the path from $v$ to $\rho$ passes through $x$ and then $y$.

Assume by contradiction that $d_H(v,x)+d(x,\alpha)+d(\alpha,\beta)+d(\beta,y)+d_H(y,\rho) \leq d_G(v,\rho)$. As $x\alpha$ and $y\beta$ are edges, we have $d_H(v,x)+d(\alpha,\beta)+d_H(y,\rho) +2 \leq d_G(v,\rho)$. By triangular inequality, $d_G(v,\rho) \leq d_H(v,\rho) \leq d_H(v,x)+d_H(x,\rho)$. So $d_H(x,\rho) \geq d_H(y,\rho)+d(\alpha,\beta)+2$ which contradicts $|d_H(x,\rho)-d_H(y,\rho)| \leq 1$. \smallskip

If $z$ is in $\partial H$ and $\alpha=\beta$ then $z=\alpha=\beta$ and the result is immediate. Otherwise assume that $z=\alpha$. We want to contradict $d(\alpha,\beta)+d(\beta,\rho) < d(\alpha,\rho)$. As $\alpha$ is adjacent to $x$ and $\beta$ adjacent to $y$ the inequality is equivalent to $d(\alpha,\beta) + d(y,\rho) < d(x,\rho)$. As $\alpha \neq \beta$, $d(\alpha,\beta) \geq 1$. So $d(y,\rho) +2 \leq d(x,\rho)$ which contradicts $|d_H(x,\rho)-d_H(y,\rho)| \leq 1$.
\end{proof}

We need one more definition to define our resolving set.
\begin{defn}\label{defz}
Let $H$ be a connected component of $G \setminus M$ of Type $A$ or $B$ and let $x$ and $y$ be the two vertices of $H$ adjacent to $M$. Let $u$ be any vertex of $H$. The \emph{projection $z_u$ of $u$ (on the path between $x$ and $y$)} is the unique vertex in the path between $x$ and $y$ in $H$ at minimum distance to $u$. 
\end{defn}

We now have all the ingredients to define the set $S$ that will be a resolving set based on resolving sets of each connected component $H$ of $G \setminus M$. The union of the resolving sets of the different graphs is not a resolving set for $G$, so we will need to add a few vertices. 
Moreover, the size of the union of the resolving sets of the connected components of $G \setminus M$, is not bounded by $L(G)+c$. Some vertices have to be removed from these resolving sets.

Let $H$ be a connected component of $G \setminus M$. Let $S_H'$ be a metric basis for $H$ such that, for each major vertex of terminal degree at least $2$ in $H$, all but one of its terminal vertices are in $S_H'$.

To get the announced bound we divided the component of \emph{Type} $B$ in two parts. Let $H$ be a component of \emph{Type} $B$, $x$ and $y$ be the two vertices of $H$ adjacent to a vertex of $M$. %and  $P$ be the set of vertices on the path between $x$ and $y$ in $H$. 
A component has type \emph{Type} $B_1$ if $x$ and $y$ are terminal vertices or exterior degree-two vertices. If $x$ (resp. $y$) is a exterior degree-two vertex, $x$ (resp. $y$) lies on exactly one path between a terminal vertex and a major vertex. Then, we define this terminal vertex as the terminal vertex of $x$ (resp. $y$). If $x$ (resp. $y$) is a terminal vertex, then $x$ (resp. $y$) is it own terminal vertex.
A component has \emph{Type $B_2$} if it is a component of \emph{Type} $B$ and not a component of \emph{Type} $B_1$.

We define the set $S_H$ which is identical to $S_H'$ but with the following slight modifications:
\begin{itemize}
    \item $H$ has \emph{Type} $A$. For every $x \in H$ adjacent to $M$, if $x$ is a terminal vertex and its major vertex $v$ in $H$ has terminal degree at least $2$, then, if $x \in S_H'$, remove $x$ from $S_H'$, otherwise remove from $S_H'$ another terminal vertex of $x$.

    \item $H$ has \emph{Type} $B_1$, we can assume that $S_H'$ contains the two terminal vertices of $x$ and $y$. Indeed, by Lemma \ref{dimtree}, there exists a metric basis of $H$ containing these two vertices.

    \item $H$ has \emph{Type} $C$, let $x$ be the unique vertex of $H$ adjacent to $M$. 
    If $x$ is a terminal vertex and its major vertex $v$ in $H$ has terminal degree at least $2$, then, if $x \in S_H'$, remove $x$ from $S_H'$, otherwise remove from $S_H'$ another terminal vertex of $x$.

    \item $H$ has \emph{Type} $C$ and $H$ is a path with one extremity adjacent to $M$. Let $w$ be a vertex of $M$ adjacent to $H$. If there is only one component of \emph{Type} C attached to $w$ that is a path connected to $w$ by an endpoint of the path, let $S_H= \emptyset$. If there are several such components, then let $S_H = \emptyset$ for one of these components and $S_K$ be the extremity of the path not adjacent to $w$ for all the other such components $K$ (or the unique vertex of $H$ if $H$ is reduced to a single vertex).

\end{itemize}

The set $S$ is defined as $S= M \cup P \cup (\bigcup_i S_i)$. The goal will consist in proving that $S$ is a resolving $S$.

\subsection{The set $S$ is a resolving set}

We will prove several lemmas that restrict the components where pairs of unresolved vertices can belong to. Let us first prove that they must belong to the same connected component of $G \setminus M$.

\begin{lemme} \label{sameh}
Let $u$ and $v$ be two vertices of $G$. If $u,v$ are not resolved by $S$ then there exists a connected component of $H \in G \setminus M$ such that both $u,v$ belong to $H$.

\end{lemme}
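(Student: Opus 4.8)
The strategy is to show that if $u$ and $v$ lie in different components of $G \setminus M$ (or at least one of them lies in $M$), then $S$ resolves the pair, since $M \subseteq S$ and $M$ is designed to cut the graph into tree-like pieces. First I would dispose of the easy case where one of $u,v$ is in $M$: then that vertex $w \in M \cap S$ itself resolves the pair $(u,v)$ unless $d(u,w) = d(v,w)$ for all $w \in M$, which I would rule out directly, or reduce to the remaining case. The heart of the argument is when $u \in H_1$ and $v \in H_2$ with $H_1 \neq H_2$ connected components of $G \setminus M$. The key structural fact, from Lemma \ref{structure2} and its corollary, is that any path from $u$ to anywhere outside $H_1$ must pass through $\partial H_1 \subseteq M$, and similarly for $v$ and $H_2$. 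So I would pick a shortest $u$–$v$ path $Q$; it exits $H_1$ through some vertex $m_1 \in M$ and enters $H_2$ through some $m_2 \in M$ (possibly $m_1 = m_2$), and the subpath of $Q$ from $u$ to $m_1$ stays inside $H_1 \cup \{m_1\}$, while the subpath from $m_2$ to $v$ stays in $H_2 \cup \{m_2\}$.

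Next I would use the vertices of $M$ on $Q$ as resolvers. Since $m_1, m_2$ lie on a shortest path between $u$ and $v$, Lemma \ref{technical} immediately gives that $m_1$ or $m_2$ resolves $(u,v)$ — provided $m_1 \neq m_2$. If $m_1 = m_2 =: m$, i.e. the shortest path leaves $H_1$ and enters $H_2$ through the same vertex $m \in M$, then I need a second resolver: I would look for another vertex of $S$ on a shortest $u$–$v$ path, or argue that $m$ alone resolves the pair. In fact when $m_1 = m_2 = m$ we have $d(u,v) = d(u,m) + d(m,v)$, and then any $w \in S$ with $d(w,u) \ne d(w,v)$ finishes the job; the natural candidates are a vertex $s \in S_{H_1}$ inside $H_1$ (which exists unless $S_{H_1} = \emptyset$), using that inside the tree component $H_1$ the resolving set $S_{H_1}'$ was chosen from a metric basis — but one has to be careful about the modifications that emptied some $S_H$. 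This is where I would invoke Lemma \ref{dimbranch}-type reasoning: at the vertex $m$, the components $H_1$ and $H_2$ are in different branches of $G \setminus \{m\}$, so if neither branch contained a vertex of $S$ we would already contradict the construction, since $S$ keeps enough vertices. The detailed case analysis of the construction (Types $A$, $B_1$, $B_2$, $C$, and the emptied path-components) is where I expect the real work to be, since each modification to $S_H'$ must be checked not to destroy resolvability across $M$.

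The main obstacle, then, is the case $m_1 = m_2$: a single cut-vertex of $M$ separating $u$ and $v$, where the one vertex $m$ on the shortest path is not enough and I must exhibit a resolver inside one of the two components. Here I would rely on the fact that each $S_H$, even after the modifications, was obtained from a metric basis of the tree $H$ with all-but-one terminal vertices selected, so that $S_H$ distinguishes $u$ from the attachment vertex of $H$ unless $u$ sits in the single "sacrificed" branch — and the emptied path-components of Type $C$ are precisely arranged (at most one per vertex $w \in M$) so that two such sacrificed branches never hang off the same $m$. Combining this with Lemma \ref{dimbranch} applied at $m$ should force a vertex of $S$ into a branch that resolves $(u,v)$. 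Once the within-$M$ and across-components cases are handled, the lemma follows, and the remaining lemmas of Section~\ref{section4}'s predecessor will further localize unresolved pairs to a single component of a controlled Type.
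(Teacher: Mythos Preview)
Your outline handles the Type~$C$ versus Type~$C$ sub-case essentially as the paper does (two distinct $M$-vertices on a shortest path and Lemma~\ref{technical}, or a single attachment $m$ plus the ``at most one emptied path per $w\in M$'' device). The gap is in the remaining case: you never invoke the set $P$ of midpoints $\rho_H$, and it is precisely $\rho$ together with Lemma~\ref{use_of_p2} that does the work when one of the two components has Type~$A$ or $B$.

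Concretely, your plan for $m_1=m_2=m$ is to apply a Lemma~\ref{dimbranch}-type argument at $m$, claiming that $H_1$ and $H_2$ lie in different components of $G\setminus\{m\}$. This is false in general: if $H_1$ has Type~$A$, it is attached to a second vertex $\beta\in M$ with $\beta\neq m$, and $\beta$ may well be joined to $H_2$ through the rest of the graph, so removing $m$ need not separate $u$ from $v$. Nor does having both $m,\beta\in S$ help directly, since $\beta$ is not on a shortest $u$--$v$ path and Lemma~\ref{technical} does not apply. The paper instead uses $\rho:=\rho_{H_u}\in P\subseteq S$: since $v\notin H_u$, every shortest $v$--$\rho$ path enters $H_u$ through some $\alpha\in\partial H_u\subseteq M$, giving $d(v,\rho)=d(v,\alpha)+d(\alpha,\rho)$; as $\alpha,\rho\in S$ do not resolve $(u,v)$, one obtains $d(u,\rho)=d(u,\alpha)+d(\alpha,\rho)$, forcing a shortest $u$--$\rho$ path to exit $H_u$, which contradicts Lemma~\ref{use_of_p2}. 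That lemma (that $\rho$ is centred enough in $H$ that no internal vertex can reach it faster via the outside) is the missing ingredient in your plan; without it the Type~$A$/$B$ case does not close.
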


\begin{proof}
First note that since $M \subseteq S$, $u,v \notin M$.
So there exist $H_u$ and $H_v$, connected components in $G \setminus M$, such that $u \in H_u$ and $v \in H_v$. Assume by contradiction that $H_u \ne H_v$. 
\begin{itemize}
    \item Assume $H_u$ is of \emph{Type} $A$ or $B$ and let $x\alpha$ and $y\beta$ be the two edges connecting $H_u$ to $M$ (with $x,y \in H_u$). Let $\rho$ be the vertex in $P \cap H_u$. Since $v \notin H_u$, the shortest path between $v$ and $\rho$ passes through $\alpha$ or $\beta$. Up to symmetry, $\alpha$ is on the shortest path between $v$ and $\rho$ so $d(v,\rho)=d(v,\alpha)+d(\alpha,\rho)$. As $\alpha$ and $\rho$ are in $S$ we also have $d(u,\rho)=d(u,\alpha)+d(\alpha,\rho)$, a contradiction with Lemma~\ref{use_of_p2}.
    \item Assume now that both $H_u$ and $H_v$ are of \emph{Type} $C$. Let $\alpha$ and $\beta$ be the vertices of $M$ connected to $H_u$ and $H_v$ respectively. If $\alpha \neq \beta$, then the shortest path between $u$ and $v$ contains two distinct vertices of $S$. Hence, by Lemma~\ref{technical} $u$ and $v$ are resolved. We assume now that $\alpha=\beta$. 
    
    Since all the components $H$ of \emph{Type} C attached to $\alpha$ but at most one contain a vertex of $S_H$, by construction, $H_u$ or $H_v$ contains a vertex of $S$. Without loss of generality, there exists $\gamma \in S \cap H_u$. If $u$ is on the path between $\gamma$ and $v$ then $d(\gamma,u) < d(\gamma,v)$.
    Otherwise, let $m_u$ in $H_u$ be at the intersection of the path between $u$ and $\gamma$ and between $\alpha$ and $\gamma$. Then, vertices $m_u$ and $\alpha$ are on the shortest path between $u$ and $v$. By Lemma~\ref{technical}, one of them must resolve $u$ and $v$. By assumption, it is not $\alpha$. If it is $m_u$, then we would have $d_G(u,\gamma) \neq d_G(v,\gamma)$. Since the shortest paths between $u$, $v$ and $\gamma$ go through $m_u$, we obtain a contradiction. 
    \end{itemize}

\end{proof}

We now prove that, if two vertices are in the same connected component of $G \setminus M$, then they are resolved by $S$. We start with connected components of \emph{Type} A.

\begin{lemme}\label{codez2}
Let $H$ be a connected component of $G \setminus M$ of Type A. Let $u$ and $v$ be two vertices of $H$ such that,for all $ s$ in $S$, $d(u,s)=d(v,s)$. Then, $z_u=z_v$.
\end{lemme}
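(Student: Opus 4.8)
The plan is to argue by contradiction that if $z_u \neq z_v$ then some vertex of $S$ resolves the pair $(u,v)$. Recall that $z_u$ and $z_v$ are the projections of $u$ and $v$ onto the path $Q$ between the two attachment vertices $x$ and $y$ of $H$; since $H$ is a tree, $Q$ is a well-defined path and every vertex of $H$ hangs off a unique vertex of $Q$. First I would observe that, because $H$ has Type $A$, the two edges leaving $H$ go to two \emph{distinct} vertices $\alpha \neq \beta$ of $M$, and both $\alpha$ and $\beta$ lie in $S$; moreover the special vertex $\rho_H \in P \cap H$ lies on $Q$ and is also in $S$. The idea is that these three ``external anchors'' $\alpha$, $\beta$, $\rho_H$, together with Lemma~\ref{use_of_p2}, let us pin down the position of the projection of any vertex of $H$ along $Q$.

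The key step is to compute distances from $u$ (and symmetrically $v$) to $\alpha$, $\beta$ and $\rho_H$ inside $H$. By Lemma~\ref{use_of_p2}, for any $w\in\partial H$ (in particular for $\alpha$ via its neighbour $x$, and for $\beta$ via its neighbour $y$) there is a shortest $w$--$\rho_H$ path staying in $H\cup\{w\}$, and there is no shortcut through $G\setminus H$ for paths ending at $\rho_H$; combined with the tree structure of $H$ this gives $d_G(u,\alpha) = 1 + d_H(u,x)$ and $d_G(u,\beta) = 1 + d_H(u,y)$, and similarly $d_H(u,x) = d_H(u,z_u) + d_Q(z_u,x)$, $d_H(u,y) = d_H(u,z_u) + d_Q(z_u,y)$. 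Subtracting, $d_G(u,\alpha) - d_G(u,\beta) = d_Q(z_u,x) - d_Q(z_u,y)$, and since $d_Q(z_u,x)+d_Q(z_u,y) = |Q|$ is a fixed constant, the ordered pair $(d_G(u,\alpha),d_G(u,\beta))$ determines $z_u$ uniquely. Hence if $d(u,\alpha)=d(v,\alpha)$ and $d(u,\beta)=d(v,\beta)$ — which holds since $\alpha,\beta\in S$ and $u,v$ are unresolved by $S$ — we get $d_Q(z_u,x) - d_Q(z_u,y) = d_Q(z_v,x) - d_Q(z_v,y)$, forcing $z_u = z_v$, a contradiction. The main subtlety to get right is justifying that the relevant shortest paths from $u$ and $v$ to $\alpha,\beta,\rho_H$ genuinely stay inside $H\cup\{\alpha,\beta\}$, i.e.\ that no shorter route leaves $H$ through the other portal and comes back; this is exactly where Lemma~\ref{use_of_p2} (applied to $\rho=\rho_H$) and the fact that any detour out of $H$ must traverse both $x\alpha$ and $y\beta$ do the work — a detour costs at least $2 + d(\alpha,\beta)$ extra and cannot beat the in-$H$ path because of the ``middle'' property $|d_H(x,\rho_H)-d_H(y,\rho_H)|\le 1$.

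I expect the main obstacle to be bookkeeping the case where $u$ or $v$ lies \emph{on} the path $Q$ (so $z_u = u$ or $z_v = v$), and the case where the shortest path from $u$ to one of $\alpha,\beta$ might naturally pass through $\rho_H$; in all of these, though, the distance identities above still hold because $H$ is a tree and $x,y,\rho_H,z_u$ are all on $Q$ in a fixed linear order. A clean way to package the argument is: from $\alpha,\beta\in S$ deduce $d_H(u,x)=d_H(v,x)$ and $d_H(u,y)=d_H(v,y)$ (after removing the common $+1$), then note $d_H(u,x)-d_H(u,y)$ equals $d_Q(z_u,x)-d_Q(z_u,y)$ since the portion of the path from $u$ to its projection is shared, and conclude $z_u=z_v$ from the fact that $t\mapsto d_Q(t,x)-d_Q(t,y)$ is injective on $Q$. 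This gives the statement without needing $\rho_H$ at all for the conclusion, though $\rho_H\in S$ (via Lemma~\ref{use_of_p2}) is what guarantees the distances are computed within $H$ in the first place.
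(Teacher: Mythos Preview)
There is a genuine gap: the identity $d_G(u,\alpha)=1+d_H(u,x)$ (and its analogue for $\beta$) is not true in general, and Lemma~\ref{use_of_p2} does \emph{not} supply it. That lemma only guarantees that shortest paths \emph{to $\rho_H$} stay inside $H$; it says nothing about shortest paths from a vertex of $H$ to $\alpha$ or to $\beta$. Concretely, if $G\setminus H$ contains a path from $\alpha$ to $\beta$ of length $\delta<d_H(x,y)$ (nothing in the Type~A hypothesis forbids this), then for $u$ with $z_u=y$ the route $u\to y\to\beta\to\cdots\to\alpha$ has length $d_H(u,y)+1+\delta$, which is strictly shorter than $d_H(u,x)+1=d_H(u,y)+d_H(x,y)+1$. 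So your ``remove the common $+1$'' step is illegitimate, and the clean packaging at the end --- which explicitly claims $\rho_H$ is unnecessary for the conclusion --- fails.

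Once one drops the false identity, your remaining argument also breaks: the correct decomposition is $d_G(u,\alpha)=d_H(u,z_u)+d_G(z_u,\alpha)$ (since $z_u$ separates $u$ from both exits of $H$), and the equalities for $\alpha$ and $\beta$ only yield $d_G(z_u,\alpha)-d_G(z_u,\beta)=d_G(z_v,\alpha)-d_G(z_v,\beta)$. But the map $t\mapsto d_G(t,\alpha)-d_G(t,\beta)$ on $Q$ is \emph{not} injective when $\delta<d_H(x,y)$: writing $a=d_H(t,x)$, $b=d_H(t,y)$, one computes that this difference equals $a-b$ when $|a-b|\le\delta$ and is constant $\pm\delta$ otherwise, so all $t$ sufficiently close to $x$ (respectively $y$) are indistinguishable from $\alpha,\beta$ alone. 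This is precisely why the paper's proof brings in the third anchor $\rho_H$: it first uses $\alpha,\beta$ to force $z_u$ and $z_v$ onto the same side of $\rho_H$ along the shortest $\alpha$--$\rho_H$ path (which \emph{does} stay in $H\cup\{\alpha\}$ by Lemma~\ref{use_of_p2}), and then combines the equality $d(u,\rho_H)=d(v,\rho_H)$ with the path decomposition to squeeze $d(z_u,z_v)$ down to zero. The use of $\rho_H$ is not cosmetic; it is essential.
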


\begin{proof}
Let $u\alpha$ and $v\beta$ be the two edges between $H$ and $M$ with $u \in H$ and $v \in H$.
The graph $H$ is a tree with a path between the two vertices $u$ and $v$. Assume $z_u \neq z_v$ and, without loss of generality, we can suppose that $z_u \neq \rho$ with $\rho$ the vertex of $P \cap H$.
We have $d(u,\alpha)=d(u,z_u)+d(z_u,\alpha)$, $d(v,\alpha)=d(v,z_v)+d(z_v,\alpha)$, $d(u,\beta)=d(u,z_u)+d(z_u,\beta)$ and $d(v,\beta)=d(v,z_v)+d(z_v,\beta)$. As $d(u,\alpha)=d(v,\alpha)$ and $d(u,\beta)=d(v,\beta)$ we get \[d(z_u,\alpha)+d(z_v,\beta)=d(z_u,\beta)+d(z_v,\alpha).\]
The vertices $z_u$ and $\rho$ are distinct and both between $\alpha$ and $\beta$. So $z_u$ is between $\alpha$ and $\rho$ or $\beta$ and $\rho$. Assume $z_u$ is between $\alpha$ and $\rho$. Then $d(z_u,\alpha) \leq d(z_u,\beta)$, so $d(z_v,\alpha) \leq d(z_v,\beta)$ meaning $z_v$ is also between $\alpha$ and $\rho$. The shortest path between $\alpha$ and $\rho$ passes through $z_u$ and $z_v$ by Lemma~\ref{use_of_p2}. Assume $z_u$ is closer than $z_v$ to $\rho$. Then $d(\alpha,\rho)=d(\alpha,z_v)+d(z_v,z_u)+d(z_u,\rho)$ gives 
\[d(\alpha,z_u)+d(z_v,\rho)=d(\alpha,z_v)+d(z_u,\rho)+2d(z_u,z_v).\]
Use now the paths to $\rho$: $d(u,\rho)=d(u,z_u)+d(z_u,\rho)$ as $z_u \neq \rho$ and $d(v,\rho) \leq d(v,z_v)+d(z_v,\rho)$. Then $d(u,\alpha)=d(u,z_u)+d(z_u,\alpha)$, $d(v,\alpha)=d(v,z_v)+d(z_v,\alpha)$ gives
\[d(z_v,\alpha)+d(z_u,\rho) \leq d(z_u,\alpha)+d(z_v,\rho).\]
A combination of the previous equality gives $d(z_u,z_v) \leq 0$ so $z_u=z_v$. 
\end{proof}

\begin{lemme} \label{typea}
Let $H$ be a connected component of $G \setminus M$ of Type A. Let $u$ and $v$ be two vertices of $H$ such that, for all $s$ in $S$, $d(u,s)=d(v,s)$. Then $u=v$.
\end{lemme}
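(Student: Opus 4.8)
The plan is to assume $u \ne v$ and derive a contradiction. By Lemma~\ref{codez2} we already have $z_u = z_v =: z$; recall that $z$ lies on the $x$--$y$ path of $H$ (call it the \emph{spine}), where $x,y$ are the two vertices of $H$ adjacent to $M$, with neighbours $\alpha \ne \beta$ in $M$ since $H$ has Type~$A$. First I would bring $u$ and $v$ to the same ``level'': for $\rho = \rho_H \in P \subseteq S$, Lemma~\ref{use_of_p2} gives $d_G(w,\rho) = d_H(w,\rho)$ for all $w \in H$, and since $z$ is the spine-projection of both $u$ and $v$, the $H$-paths from $u$ and from $v$ to $\rho$ pass through $z$; hence $d_G(u,\rho) = d_G(v,\rho)$ forces $d_H(u,z) = d_H(v,z) =: \ell$, and therefore also $d_H(u,x) = d_H(v,x)$ and $d_H(u,y) = d_H(v,y)$.

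Next I would control what happens outside $H$. As $x\alpha$ and $y\beta$ are the only edges leaving $H$, a shortest $G$-path from $u$ to any $s \in H$ either stays in $H$ or exits at $x$ and re-enters at $y$ (or vice versa); so $d_G(u,s) = \min\bigl(d_H(u,s),\ d_H(u,x)+D+d_H(y,s),\ d_H(u,y)+D+d_H(x,s)\bigr)$, where $D$ is the length of a shortest $x$--$y$ walk avoiding $V(H)\setminus\{x,y\}$ (and $+\infty$ if there is none), and the same holds with $u$ replaced by $v$ and, by the first step, the \emph{same} values for the last two terms. The key consequence: if $s$ lies in the branch $B$ of $z$ in $H$ away from the spine (the component of $z$ in $H$ after deleting all spine vertices but $z$), then $d_H(u,x)+D+d_H(y,s) = \bigl(d_H(u,z)+d_H(z,s)\bigr) + \bigl(d_H(z,x)+D+d_H(z,y)\bigr) \ge d_H(u,s)$, and symmetrically for the other term, so the detour never helps for targets in $B$, and $d_G(u,s)=d_H(u,s)$, $d_G(v,s)=d_H(v,s)$ for all $s \in B$ (morally, a detour from $u$ to $s\in B$ would have to cross $z$ twice).

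Now I would run the usual tree argument in $H$: if $m$ is the vertex where the paths $z\to u$ and $z\to v$ split, then $d_H(m,u)=d_H(m,v)$, and the vertices of $H$ resolving $(u,v)$ are exactly those in the two components of $H\setminus\{m\}$ containing $u$ and $v$ — both contained in $B$. Since $S_H'$ is a metric basis of $H$, it contains such a vertex $t\in B$, and then $d_G(u,t)=d_H(u,t)\ne d_H(v,t)=d_G(v,t)$ by the previous step. If $t\in S_H\subseteq S$ we are done, so suppose $t\in S_H'\setminus S_H$. Here I would use that Lemmas~\ref{lowerbound} and~\ref{dimtree} force $S_H'$ to be \emph{exactly} a choice of all but one terminal vertex of each major vertex of terminal degree $\ge 2$; hence $t$ is a terminal vertex of the major vertex of $x$ or of $y$. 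Since $t$ must lie on $u$'s side of $m$ while its spine-projection is that major vertex, the major vertex must be $z$ itself; this makes $z$ a major vertex with $x$ among its terminal vertices, forces $u$ onto the pending path of $z$ ending at $t$, and so (as $d_H(u,z)=d_H(v,z)$ and $u\ne v$) puts $v$ in another branch $\mathcal B_v$ of $z$, with $m=z$. I would conclude by a short bookkeeping of the at most three terminal vertices of $z$ deleted in passing from $S_H'$ to $S_H$: either $\mathcal B_v$ contains a terminal vertex of $z$ and then one such vertex survives in $S_H$; or $\mathcal B_v$ contains a major vertex of $H$, hence (taking one farthest from $z$) a major vertex $z^*\ne z$ of terminal degree $\ge 2$ not on the spine, whose terminal vertices the modification leaves untouched. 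In both cases some $s\in S_H\cap\mathcal B_v\subseteq S\cap B$ resolves $(u,v)$, the desired contradiction.

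I expect this last case to be the main obstacle. Two things could go wrong a priori: a shortcut through $G\setminus H$ could mask the $H$-distance gap between $u$ and $v$, and the pruning from $S_H'$ to $S_H$ could erase the one vertex distinguishing them. The first is dealt with once and for all by confining the distinguishing vertex to $B$, where shortcuts never help; the second needs the slightly fiddly analysis above, showing that a deleted distinguisher forces $z$ to be the affected major vertex and that an adequate replacement always survives in $v$'s branch of $z$.
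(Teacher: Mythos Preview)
Your argument is correct, but it takes a considerably longer route than the paper's. The paper's proof rests on one observation you never invoke: by the very design of the Type~$A$ modification, $S_H \cup \{\alpha,\beta\}$ is a resolving set of the tree $H' := H \cup \{\alpha,\beta\}$ (the removed terminal vertex at $w_x$ is replaced, for resolving purposes, by the new leaf $\alpha$, and similarly for $y$ and $\beta$). Since $S_H \cup \{\alpha,\beta\} \subseteq S$, some $\gamma \in S$ resolves $(u,v)$ in $H'$; then a three-line computation using $z_u = z_v$ (Lemma~\ref{codez2}) shows that $\gamma$ also resolves $(u,v)$ in $G$, and the proof is over.

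You instead work entirely with $S_H'$, first proving that any $H$-resolver of $(u,v)$ lies in the off-spine branch $B$ at $z$ (where $G$-distances coincide with $H$-distances), and then handling by hand the case where every such resolver has been pruned when passing from $S_H'$ to $S_H$. That last bookkeeping---forcing $m=z$, locating $u$ on the pending path to $t$, and producing a surviving resolver in $\mathcal B_v$ either from a terminal vertex of $z$ or from a deeper major vertex $z^*$---is all correct, but it re-derives, case by case, exactly what the single sentence ``$S_H \cup \{\alpha,\beta\}$ resolves $H'$'' encapsulates. What your approach buys is that it never leaves $H$: you do not need to analyze the auxiliary tree $H'$ or argue that $\alpha,\beta$ act as substitute leaves. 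What the paper's approach buys is brevity and a clean explanation of \emph{why} the Type~$A$ modification was made in the first place.
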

\begin{proof}
Assume by contradiction $u \neq v$.
Let $\alpha$ and $\beta$ be the two vertices of $M$ adjacent to $H$. By construction of $S_H$, $S_H \cup \{\alpha,\beta\}$ is a resolving set of $H \cup \{\alpha,\beta\}$. Let $\gamma$ which resolves the pair $(u,v)$ in $H \cup \{\alpha,\beta\}$. By Lemma \ref{codez2}, $z_u=z_v$ so $\gamma$ still resolves $(u,v)$ in $G$. Indeed if $z_\gamma=z_u$ then the distances are the same in $G$ and in $H \cup \{\alpha,\beta\}$. If $z_\gamma \neq z_u$ then $d_G(u,\gamma)=d_H(u,z_u)+d_G(z_u,\gamma)$ and $d_G(v,\gamma)=d_H(v,z_v)+d_G(z_v,\gamma)$. As $d_H(u,\gamma) \neq d_H(v,\gamma)$ with $d_H(u,\gamma)=d_H(u,z_u)+d_H(z_u,\gamma)$ and $d_H(v,\gamma)=d_H(v,z_u)+d_H(z_u,\gamma)$ we get $d_H(u,z_u) \neq d_H(v,z_u)$ so $d_G(u, \gamma) \neq d_G(v,\gamma)$. So $\gamma$ resolves $(u,v)$ in $G$, a contradiction.

\end{proof}

\begin{lemme}\label{codez}
Let $H$ be a connected component of $G \setminus M$ of Type B. If $u,v \in H$ are not resolved by $S$, then $z_u=z_v$.
\end{lemme}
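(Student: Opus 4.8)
The plan is to mimic the structure of the Type $A$ argument (Lemma~\ref{codez2}), but carefully tracking the fact that in a Type $B$ component both boundary edges go to the *same* vertex of $M$. So let $H$ be a connected component of $G\setminus M$ of Type $B$, let $x,y$ be the two vertices of $H$ adjacent to $M$, and let $\alpha$ be the (unique) vertex of $M$ adjacent to both $x$ and $y$ (recall for Type $B$, $\alpha=\beta$). Let $\rho$ be the vertex of $P\cap H$, which by construction lies on the $x$–$y$ path in $H$ with $|d_H(x,\rho)-d_H(y,\rho)|\le 1$. Suppose $u,v\in H$ are not resolved by $S$, so in particular $d(u,s)=d(v,s)$ for all $s\in S$; I want to conclude $z_u=z_v$, where $z_u,z_v$ are the projections of $u,v$ onto the $x$–$y$ path.

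The key observation is that $\alpha\in S$ and $\rho\in S$, so $d(u,\alpha)=d(v,\alpha)$ and $d(u,\rho)=d(v,\rho)$. First I would use Lemma~\ref{use_of_p2} to say that the shortest path from any vertex of $H$ to $\rho$ stays inside $H$, hence $d_G(w,\rho)=d_H(w,\rho)=d_H(w,z_w)+d_H(z_w,\rho)$ for $w\in\{u,v\}$ (the last equality because $z_w$ is on the $x$–$y$ path, on which $\rho$ lies, and $H$ is a tree). For the distances to $\alpha$: either the shortest $w$–$\alpha$ path leaves $H$ — but then it must pass through $x$ or $y$, so it enters $M$ at $\alpha$ after one edge from $x$ or from $y$, giving $d_G(w,\alpha)=\min\big(d_H(w,x),d_H(w,y)\big)+1$ — or it stays in $H$, giving $d_G(w,\alpha)=d_H(w,\alpha)=\min\big(d_H(w,x),d_H(w,y)\big)+1$ as well (since $\alpha$'s only neighbours in $H\cup\{\alpha\}$ are $x$ and $y$). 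Either way $d_G(w,\alpha)=\min\big(d_H(w,x),d_H(w,y)\big)+1$, so in fact I get clean formulas purely in terms of distances within $H$. Writing $d_H(w,x)=d_H(w,z_w)+d_H(z_w,x)$ and $d_H(w,y)=d_H(w,z_w)+d_H(z_w,y)$, the two equations $d(u,\alpha)=d(v,\alpha)$ and $d(u,\rho)=d(v,\rho)$ become relations among $d_H(u,z_u),d_H(v,z_v)$ and the positions of $z_u,z_v,\rho$ on the path. Then I would run the same case analysis as in Lemma~\ref{codez2}: WLOG $z_u$ lies on the $x$–$\rho$ subpath, deduce $\min(d_H(u,x),d_H(u,y))=d_H(u,x)$ and similarly for $v$, so $z_v$ is also on the $x$–$\rho$ side, and then combine the $\rho$-equation with the $\alpha$-equation (which on this side reads $d_H(u,z_u)+d_H(z_u,x)=d_H(v,z_v)+d_H(z_v,x)$) to force $d_H(z_u,z_v)\le 0$, i.e. $z_u=z_v$.

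The main obstacle I expect is the bookkeeping around which endpoint ($x$ or $y$) realizes the minimum in $d_G(w,\alpha)=\min(d_H(w,x),d_H(w,y))+1$: this is exactly what tells us on which side of $\rho$ the projection lies, and one must be careful that $z_u$ and $z_v$ end up on the *same* side (using $|d_H(x,\rho)-d_H(y,\rho)|\le1$ to rule out the degenerate case where a projection sits essentially at $\rho$). Once both projections are confirmed to be on the same side of $\rho$ on the $x$–$y$ path, the argument reduces to a one-dimensional computation on that path, identical in spirit to the Type $A$ case, and a linear combination of the $\alpha$-equation and the $\rho$-equation collapses to $2\,d_H(z_u,z_v)\le0$. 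I would also note explicitly at the start that $u,v\notin\{x\}$ cannot make $z_u$ or $z_v$ undefined, and that if $z_u=\rho$ or $z_v=\rho$ the claim is even easier (the $\rho$-equation directly gives $d_H(u,z_u)=d_H(v,z_v)$ and then the $\alpha$-equation pins down the projection), so assuming $z_u,z_v\ne\rho$ loses no generality.
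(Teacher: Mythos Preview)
Your plan has a genuine gap: the two vertices $\alpha$ and $\rho$ alone are \emph{not} enough to force $z_u=z_v$ in a Type~$B$ component. The point is that in Type~$B$ the vertex $\alpha$ and the midpoint $\rho$ sit (up to one unit) at antipodal positions on the cycle formed by $\alpha$ together with the $x$--$y$ path, so they cannot break the reflective symmetry of that cycle. Concretely, take the $x$--$y$ path $x=p_0,p_1,p_2,p_3,p_4=y$ with $\rho=p_2$, and let $u$ be a neighbour of $p_1$ off the path and $v$ a neighbour of $p_3$ off the path. Then $z_u=p_1\neq p_3=z_v$, yet $d_G(u,\alpha)=d_G(v,\alpha)=3$ and $d_G(u,\rho)=d_G(v,\rho)=2$. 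In your outline the step ``deduce \ldots\ and similarly for $v$, so $z_v$ is also on the $x$--$\rho$ side'' is exactly where this fails: nothing you have written forces $z_u$ and $z_v$ onto the same side of $\rho$, and the inequality $|d_H(x,\rho)-d_H(y,\rho)|\le 1$ does not help, as the example above shows.

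This is precisely why the paper's proof is organised differently from the Type~$A$ case and why it splits Type~$B$ into $B_1$ and $B_2$: in each subcase a \emph{third} vertex of $S$ is needed to break the symmetry. For $B_1$ the construction guarantees that a terminal vertex of $x$ (or $y$) lies in $S_H$, and then $\{\alpha,\rho,\text{that terminal vertex}\}$ gives three points whose projections on the cycle allow Lemma~\ref{cycle3} to be applied. For $B_2$ no vertex is removed from $S_H'$, so $S_H$ is a metric basis of $H$, whence some $\gamma\in S_H$ separates $u$ and $v$ in $H$; this $\gamma$ is the required third constraint. Your argument can be salvaged, but only after incorporating such a third vertex; the purely two-point reduction you propose cannot work.
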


\begin{proof}
Let us prove it by contradiction. Let $\alpha$ be the vertex of $M$ connected to $H$. Let $x,y$ be the two vertices of $H$ connected to $\alpha$. \smallskip

\noindent
\textbf{Case 1: $H$ has \emph{Type} $B_1$} \\ 
 By construction, $\{\alpha,\rho,y\} \subseteq S$ with $y$ such that $z_y$ is connected to $\alpha$. Assume by contradiction $z_u \neq z_v$. We first show that $(z_u,z_v)$ is resolved by $\{\alpha,\rho,y\}$. Indeed, $d(y,z_u)=d(y,z_y)+d(z_y,z_u)$ and $d(y,z_v)=d(y,z_y)+d(z_y,z_v)$. Lemma \ref{cycle3} ensures that $(z_u,z_v)$ is resolved by a vertex of $\{\alpha,\rho,z_y\}$ and if $z_y$ resolves $(z_u,z_v)$, then $y$ resolves $(z_u,z_v)$. So $(z_u,z_v)$ is resolved by a vertex of $\{\alpha,\rho,y\}$, let $\gamma$ be such a vertex.
     
 If $z_v=\rho$, then $d(\gamma,u)=d(\gamma,z_u)+d(z_u,u)$ and $d(\gamma,v)=d(\gamma,\rho)+d(\rho,v)$, so $d(z_u,u) \neq d(\rho,v)$. As $\rho \in S$, $d(u,\rho)=d(v,\rho)$ so $d(z_u,u)<d(\rho,v)$. We exploit now the equalities $d(\alpha,u)=d(\alpha,v)$ and $d(\alpha,u)=d(\alpha,z_u)+d(z_u,u)$. By definition of $\rho$, $d(\alpha,z_u) \leq d(\alpha,\rho)$ and $d(z_u,u)<d(\rho,v)$. So $d(\alpha,v)=d(\alpha,\rho)+d(\rho,v)>d(\alpha,u)$, a contradiction.

If $z_v \neq \rho$, then $d(\gamma,u)=d(\gamma,z_u)+d(u,z_u)$ and $d(\gamma,v)=d(\gamma,z_v)+d(v,z_v)$. By hypothesis $d(\gamma,u)=d(\gamma,v)$, so $d(u,z_u) \neq d(v,z_v).$ We can assume by symmetry $d(z_u,u)<d(z_v,v)$. Let $\beta \in \{\alpha,\rho\}$, such that $d(z_u,\beta) \leq d(z_v,\beta)$.
Such a vertex exists since the distances $d(\alpha,z_u)+d(z_u,\rho)$ and $d(\alpha,z_v)+d(z_v,\rho)$ are the same if $z_u,z_v$ are both on the same side of the $xy$-path with respect to $\rho$ and differ by at most one otherwise.
%(exists by definition of $\rho$)
Then, $d(\beta,u)=d(\beta,v)$ and $d(\beta,u)=d(\beta,z_u)+d(z_u,u)$. But $d(\beta,z_u) \leq d(\beta,z_v)$ and $d(z_u,u)<d(z_v,v)$. So $d(\beta,v)=d(\beta,z_v)+d(z_v,v)>d(\beta,u)$, a contradiction. 
\smallskip

\noindent
\textbf{Case 2: $H$ has  \emph{Type} $B_2$} \\
    As $S$ contains $S_H$ which is a resolving set of $H$, there exists $\gamma \in H$ such that $d_H(u,\gamma) \neq d_H(v,\gamma)$. By hypothesis $d_G(u,\gamma) = d_G(v,\gamma)$.
    
    Assume first $z_{\gamma} = \rho$. Let us prove that $d_G(u,\gamma)=d_H(u,\gamma)$ and $d_G(v,\gamma)=d_H(v,\gamma)$, which gives a contradiction. By symmetry it is enough to prove that $d_G(u,\gamma)=d_H(u,\gamma)$.
    If $z_u=z_{\gamma}=\rho$ then $d_G(u,\gamma)=d_G(u,\rho)+d_G(\rho,\gamma)=d_H(u,\rho)+d_H(\rho,\gamma)=d_H(u,\gamma)$ and the conclusion follows. 
    If $z_u \neq z_{\gamma}$, then
    
    \[d_G(u,\gamma)=d_G(u,z_u)+d_G(z_u,\rho)+d_G(\rho,\gamma).\]
    
    By Lemma~\ref{use_of_p2}, $d_G(z_u,\rho)=d_H(z_u,\rho)$. We have $d_G(u,z_u)=d_H(u,z_u)$ and $d_G(\rho,\gamma)=d_H(\rho,\gamma)$ since the paths between these vertices are unique. So $d_G(u,\gamma)=d_H(u,\gamma)$.
    
    So, from now on, we can assume that $z_{\gamma} \neq \rho$. Since $\{\rho,\alpha\}$ does not resolve $(u,v)$, we have $d(u,z_u)+d(z_u,\rho)=d(v,z_v)+d(z_v,\rho)$, and $d(u,z_u)+d(z_u,\alpha)=d(v,z_v)+d(z_v,\alpha)$. Thus,
    \[d(z_u,\rho)+d(z_v,\alpha)=d(z_v,\rho)+d(z_u,\alpha).\] 
    Since $\rho$ and $\alpha$ are almost opposed on the smallest cycle containing them, we also have
    \[d(z_u,\rho)+d(z_u,\alpha)=d(z_v,\rho)+d(z_v,\alpha)+ \epsilon\] 
    with $\epsilon \in \{-1,0,1\}$. Summing the two equalities gives $2d(z_u,\rho)=2d(z_v,\rho)+ \epsilon$. So by parity $\epsilon=0$. Then, $d(z_u,\rho)=d(z_v,\rho)$ and finally $d(z_u,\alpha)=d(z_v,\alpha)$. Since  $d(u,\alpha)=d(v,\alpha)$, we obtain $d(u,z_u)=d(v,z_v)$. 
    
    If $z_\gamma \notin \{z_u,z_v\}$, then $z_u$ and $z_v$ are at the same distance to $\alpha$, $\rho$ and $z_\gamma$, so, by Lemma \ref{cycle3}, $z_u=z_v$ . If $z_\gamma \in \{z_u,z_v\}$, up to symmetry we can assume $z_\gamma = z_u$. Then $d(u,\gamma) \leq d(u,z_u)+d(z_u,\gamma)$ and $d(v,\gamma)=d(v,z_v)+d(z_v,z_u)+d(z_u,\gamma)$. As $d(u,\gamma)=d(v,\gamma)$ and $d(u,z_u)=d(v,z_v)$ we get $d(z_v,z_u) \leq 0$ so $z_u=z_v$. 
\end{proof}

\begin{lemme} \label{typeb}
Let $H$ be a connected component of $G \setminus M$ of Type B.
The set $S$ resolves all the pairs of vertices of $H$.

\end{lemme}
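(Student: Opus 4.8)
The plan is to argue by contradiction. Suppose $u\neq v$ are two vertices of $H$ that $S$ does not resolve. Write $\alpha$ for the vertex of $M$ adjacent to $H$, let $x,y\in H$ be the two vertices adjacent to $\alpha$, and let $\pi$ be the path between $x$ and $y$ in the tree $H$. By Lemma~\ref{codez} the projections of $u$ and $v$ onto $\pi$ coincide, so set $z:=z_u=z_v$. Let $T_z$ be the subtree of $H$ consisting of all vertices whose projection onto $\pi$ equals $z$; it contains $u$, $v$ and $z$, it is a subtree of $H$ (hence distances inside it agree with $d_H$), and it meets $\pi$ only in $z$. The one structural fact I would establish first is that $z$ separates $V(T_z)\setminus\{z\}$ from $V(G)\setminus V(T_z)$ in $G$: inside the tree $H$ this is immediate since $T_z$ hangs off $\pi$ at $z$, and the only edges leaving $H$ are $x\alpha$ and $y\alpha$ with $x,y\in\pi$. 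From this separation I would record three consequences, checking the trivial sub-cases where $a=z$: (i) $d_G(a,z)=d_H(a,z)$ for every $a\in T_z$; (ii) $d_G(a,w)=d_H(a,z)+d_G(z,w)$ whenever $a\in T_z$ and $w\notin V(T_z)$; and (iii) $d_G(a,b)=d_H(a,b)$ for all $a,b\in T_z$ (a shortest path between two vertices of $T_z$ cannot leave $T_z$, since leaving and returning would force it to visit $z$ twice).

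With these identities the argument splits according to whether $d_H(u,z)=d_H(v,z)$. If $d_H(u,z)\neq d_H(v,z)$, apply (ii) with $w=\alpha$, which lies in $S$ and outside $T_z$: then $d_G(u,\alpha)-d_G(v,\alpha)=d_H(u,z)-d_H(v,z)\neq 0$, so $\alpha$ resolves $(u,v)$, a contradiction. If instead $d_H(u,z)=d_H(v,z)$, then (ii) shows that no vertex of $S$ outside $T_z$ can resolve $(u,v)$ in $G$, so I need a resolving vertex inside $T_z$. For Type $B$ the set $S_H$ equals the metric basis $S_H'$ of $H$, so $S_H$ is a resolving set of $H$ with $S_H\subseteq S$; pick $\gamma\in S_H$ with $d_H(u,\gamma)\neq d_H(v,\gamma)$. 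If $\gamma\notin T_z$ then, since in the tree $H$ both the $u$–$\gamma$ path and the $v$–$\gamma$ path pass through $z$, we would get $d_H(u,\gamma)-d_H(v,\gamma)=d_H(u,z)-d_H(v,z)=0$, a contradiction; hence $\gamma\in T_z$, and then (iii) gives $d_G(u,\gamma)=d_H(u,\gamma)\neq d_H(v,\gamma)=d_G(v,\gamma)$, so $\gamma$ resolves $(u,v)$ in $G$, again a contradiction. This closes the case analysis.

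The step I expect to need the most care is the separation claim and its consequence (iii): one must verify that $T_z$ is genuinely a subtree of $H$ meeting $\pi$ only in $z$, and that no shortcut through $G\setminus H$ — which necessarily re-enters $H$ through a vertex of $\{x,y\}\subseteq\pi$ — can be shorter than the value (i)–(iii) assigns to the path that stays inside $T_z$; the borderline cases where $z$ equals $x$ or $y$, or where $u$ or $v$ equals $z$, should be checked but are routine. Everything else is bookkeeping with the distance identities, and it is worth noting that, unlike the proof of Lemma~\ref{codez}, this argument treats Types $B_1$ and $B_2$ uniformly.
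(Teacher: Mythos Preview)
Your argument follows the same skeleton as the paper's: invoke Lemma~\ref{codez} to get $z_u=z_v=:z$, then show that either $d_H(u,z)\neq d_H(v,z)$ (so a vertex of $S$ outside $T_z$ resolves the pair) or else some $\gamma\in S_H$ with $z_\gamma=z$ resolves it in $H$, hence in $G$. Your explicit use of the separation property of $z$ is cleaner than the paper's more ad hoc distance computations, and using $\alpha$ rather than $\rho$ for the case $d_H(u,z)\neq d_H(v,z)$ is a harmless variation.

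There is, however, a genuine gap. You assert that ``for Type $B$ the set $S_H$ equals the metric basis $S_H'$ of $H$'', and your step~4 depends entirely on $S_H$ being a resolving set of $H$. The paper's own proof does \emph{not} take this for granted: it has a separate Case~2, ``the pair $(u,v)$ is not resolved by $S_H$ in $H$, which can only happen if $H$ has Type~$B_1$'', and Lemma~\ref{sizecomp} counts $|S_H|=L(H)-1$ for Type~$B_1$. So the intended construction deletes a vertex for Type~$B_1$ (even though the bullet list describing $S_H$ does not spell this out), and your argument is missing exactly that case. Concretely, when $H$ has Type~$B_1$, the vertex of $S_H'$ that resolves $(u,v)$ in $H$ may be the one that was removed; it is a terminal vertex whose projection on $\pi$ is adjacent to $\alpha$, and the paper shows from this that $d_H(u,z)\neq d_H(v,z)$, whence $\alpha$ resolves $(u,v)$ in $G$. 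You should add this case (or justify carefully, against both Case~2 here and Lemma~\ref{sizecomp}, why you believe $S_H=S_H'$ for Type~$B_1$).
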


\begin{proof}
Let $u,v$ be two vertices of $H$ which are not resolved by $S$.
By Lemma \ref{codez}, $z_u=z_v$. Let $z =z_u=z_v$, if $\deg(z)=2$ then $u=v=z$ and the result is proven. We can assume from now on that $\deg(z) \geq 3$. 
\smallskip

\noindent
\textbf{Case 1: There exists $\gamma \in S_H$ which resolves the pair $(u,v)$ in $H$.} 

If $z_{\gamma}=z_u=z_v$ then the distances between $u$ (resp. $v$) and $\gamma$ are the same in $H$ and $G$, a contradiction.

So we can assume that $z_{\gamma} \neq z_u$. We have $d_{H}(\gamma,u)=d_{H}(u,z)+d_{H}(z,\gamma)$ and $d_{H}(\gamma,v)=d_{H}(v,z)+d_{H}(z,\gamma)$. Since $d_{H}(\gamma,u) \neq d_{H}(\gamma,v)$, we have $d_{H}(u,z) \neq d_{H}(v,z)$. 
Now, since by Lemma~\ref{use_of_p2}, for $w \in \{ u,v \}$, $d_H(w,\rho)=d_G(w,\rho)$ and $d(w,\rho)=d(w,z)+d(z,\rho)$, $\rho$ resolves $(u,v)$, a contradiction.
\smallskip

\noindent
\textbf{Case 2: The pair $(u,v)$ is not resolved by $S_H$ in $H$.}

This case can only happen if $H$ has \emph{Type} $B_1$ (since otherwise no vertex of $S_H'$ is removed). Then there exists a vertex $x$ such that $z_x$ is adjacent to $\alpha$ which resolves the pair $(u,v)$ in $H$. If $z=z_x$, then, $d(z,u)=d(z,x)-d(u,x)$ and $d(z,v)=d(z,x)-d(v,x)$ so $d(z,u) \neq d(z,v)$. If $z \neq x$ then $d(x,u)=d(x,z)+d(z,u)$ and $d(x,v)=d(x,z)+d(z,v)$. So $d(z,u) \neq d(z,v)$ in both cases. Hence $\alpha$ resolves the pair $(u,v)$ in $G$. As $z \neq \alpha$, $d(\alpha,u)=d(\alpha,z)+d(z,u) \neq d(\alpha,z)+d(z,v) = d(\alpha,v)$.
\end{proof}

\begin{lemme} \label{typec}
Let $H$ be a connected component of $G \setminus M$ of Type C. Then $S$ resolves any pair of vertices in $H$.

\end{lemme}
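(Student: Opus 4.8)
The plan is to exploit the defining feature of Type~C components: there is a single vertex $x\in H$ at which every edge between $H$ and $M$ is incident, and $H$ is a tree (a component of the forest $G\setminus M$). The first step is to record the resulting rigidity of distances. Since any walk of $G$ that leaves $H$ must leave and re-enter through $x$, one gets $d_G(u,x)=d_H(u,x)$ and $d_G(u,v)=d_H(u,v)$ for all $u,v\in H$, and $d_G(u,\alpha)=d_H(u,x)+1$ for every $\alpha\in M$ adjacent to $x$ (at least one such $\alpha$ exists since $G$ is connected and $M\neq\emptyset$, and $\alpha\in M\subseteq S$). Also $S\cap H=S_H$, since $M$ and $P$ are disjoint from any Type~C component. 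Consequently, for a pair $(u,v)$ of vertices of $H$: a vertex $\gamma\in S_H$ resolves $(u,v)$ in $G$ iff it resolves it in $H$, and $\alpha$ resolves $(u,v)$ in $G$ iff $x$ resolves $(u,v)$ in $H$. So it suffices to prove that $S_H\cup\{x\}$ is a resolving set of $H$.

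Second, I would go through the construction of $S_H$ by cases. If $H$ is a path with the extremity $x$ adjacent to $M$, then already $\{x\}$ resolves $H$, because the distances $d_H(\cdot,x)$ along the path are pairwise distinct; so $S_H\cup\{x\}$ resolves $H$ whatever $S_H$ is (possibly empty). If $H$ is not a path and no vertex is deleted from $S_H'$ (the case where $x$ is not a terminal vertex of a major vertex of terminal degree at least $2$; this also subsumes $H$ a path with $x$ interior, where $S_H=S_H'$ is a single endpoint), then $S_H=S_H'$ is a metric basis of $H$ and we are done. The remaining case is $H$ not a path, $x$ a terminal vertex of a major vertex $w$ with $\ter_H(w)\geq 2$, so that exactly one terminal vertex of $w$ was removed from $S_H'$ to get $S_H$. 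If $x\in S_H'$ then $S_H\cup\{x\}=S_H'$, done. If $x\notin S_H'$ then $S_H\cup\{x\}$ is obtained from the metric basis $S_H'$ by swapping one terminal vertex $t$ of $w$ for $x$ (such a $t\in S_H'$ exists as $\ter_H(w)\geq 2$); hence $S_H\cup\{x\}$ still contains all but exactly one terminal vertex of every major vertex of $H$, and is therefore a resolving set of $H$ by Lemma~\ref{dimtree}.

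Third, I would translate back. Given any pair $(u,v)$ of $H$, some $s\in S_H\cup\{x\}$ resolves it in $H$; if $s\in S_H$ then $s$ resolves $(u,v)$ in $G$ as well, and if $s=x$ then $\alpha$ resolves $(u,v)$ in $G$. In either case $S$ resolves $(u,v)$, which proves the lemma.

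I expect the only real (and still mild) obstacle to be the bookkeeping in the last subcase: verifying that replacing $x$ by another terminal vertex of the same major vertex preserves the hypothesis ``all but one terminal vertex of every major vertex'' needed to invoke Lemma~\ref{dimtree}, and making sure the definition of $S_H$ for Type~C components is applied correctly in the degenerate situations ($H$ a single vertex, a single edge, or a path whose attachment vertex $x$ is interior). Everything else follows directly from the distance rigidity established in the first step.
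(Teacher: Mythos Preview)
Your proposal is correct and follows essentially the same approach as the paper: both exploit that the attachment vertex $x$ is a cut-vertex of $G$, reduce to showing that $S_H\cup\{x\}$ resolves the tree $H$, and then translate back using the distance rigidity (the paper phrases this via the auxiliary tree $H'=H\cup\{m\}$ and $S_H\cup\{m\}$, which is equivalent to your $S_H\cup\{x\}$ since $d_G(\cdot,m)=d_H(\cdot,x)+1$). Your case analysis on the construction of $S_H$ is a bit more explicit than the paper's, but the key step---the swap of terminal vertices at the major vertex of $x$ and the appeal to Lemma~\ref{dimtree}---is the same.
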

\begin{proof}
Assume by contradiction two vertices $u,v \in H$ with $u \neq v$ are not resolved by $S$. Let $x$ be the unique vertex of $H$ adjacent to $M$ and $m$ be a vertex of $M$ adjacent to $x$. Let $H'$ be the subgraph of $G$ with vertex set $V(H) \cup \{m\}$.

If $S$ contains a resolving set of $H$, then, since $x$ is a cut-vertex, $S$ resolves the pair $(u,v)$. So we can assume that at least one vertex of $S_{H}'$ has been removed during the construction of $S$. 

Note that since $m$ does not resolve $(u,v)$, $d(u,x)=d(v,x)$ in $H$. So in particular $H$ cannot be a path with endpoint $x$. So by construction of $S$, we can assume that $x$ is a terminal vertex in $H$ and its major vertex has terminal degree at least two in $H$.

By construction of $S_H$, $S \cup \{x\}$ is a resolving set for $H$. Then $S \cup \{m\}$ is a resolving set for $H'$. Let $\gamma \in S_H \cup \{m\}$ that resolves $u$ and $v$ in $H'$. The distances between $u$, $v$ and $\gamma$ in $G$ and $H'$ are the same so $\gamma$ resolves $u$ and $v$, a contradiction.
\end{proof}

\begin{lemme}\label{resolving}
The set $S$ is a resolving set of $G$.
\end{lemme}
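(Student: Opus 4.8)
The plan is to assemble the four structural lemmas just proved into a short case analysis. Recall that we are in the case $c(G)\geq 2$ — the cases $c(G)=0$ and $c(G)=1$ having already been handled by Lemmas~\ref{dimtree} and~\ref{lem:1cycle} — so the feedback vertex set $X$, and hence $M\supseteq X$, is non-empty.

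First I would take an arbitrary pair of vertices $u\neq v$ of $G$ and assume, for contradiction, that no vertex of $S$ resolves $(u,v)$. Since $M\subseteq S$, neither $u$ nor $v$ lies in $M$, so both lie in components of $G\setminus M$; by Lemma~\ref{sameh} they in fact lie in a \emph{common} connected component $H$ of $G\setminus M$. The next point — the only one that deserves a sentence of care — is to observe that $H$ is genuinely of one of the three Types $A$, $B$, $C$: because $G$ is connected and $M\neq\emptyset$, there is at least one edge of $G$ between $H$ and $G\setminus H$, and by Lemma~\ref{structure2} (and the corollary following it) there are at most two such edges and $H$ is attached to at most two vertices of $M$; the Type $A$/$B$/$C$ trichotomy exhausts all the ways a component can be attached to $M$ by one or two edges.

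It then remains to conclude by cases on the Type of $H$. If $H$ has Type $A$, Lemma~\ref{typea} forces $u=v$, contradicting $u\neq v$. If $H$ has Type $B$, Lemma~\ref{typeb} states that $S$ resolves every pair of vertices of $H$, in particular $(u,v)$, again a contradiction. If $H$ has Type $C$, Lemma~\ref{typec} gives the same contradiction. Hence $S$ resolves every pair of distinct vertices of $G$, i.e.\ $S$ is a resolving set of $G$.

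I expect essentially no obstacle here: all the real work has been pushed into Lemmas~\ref{sameh}, \ref{typea}, \ref{typeb} and~\ref{typec}, and this lemma is purely their assembly. Note that this statement only asserts that $S$ \emph{is} a resolving set; the complementary estimate $|S|\leq L(G)+6c(G)$ needed to finish the proof of Theorem~\ref{thm:mainbis} is a separate counting argument (bounding $|M|$, $|P|$ and $\sum_i|S_i|$ in terms of $c(G)$ and $L(G)$) and is not part of this lemma.
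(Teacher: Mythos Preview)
Your proposal is correct and follows essentially the same approach as the paper: assume a pair $(u,v)$ is not resolved, invoke Lemma~\ref{sameh} to place both in a common component $H$ of $G\setminus M$, and then dispatch the three Types via Lemmas~\ref{typea}, \ref{typeb}, \ref{typec}. The extra care you take (noting $M\neq\emptyset$ and that the Type trichotomy is exhaustive) is a welcome clarification, but the argument is otherwise identical to the paper's.
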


\begin{proof}
Let $(u,v)$ be a pair of vertices that is not resolved by $S$. Assume by contradiction $u \neq v$. By Lemma \ref{sameh}, there exists a connected component $H$ of $G \setminus M$ such that $u \in H$ and $v \in H$. Then, if $H$ has \emph{Type} A, by Lemma \ref{typea}, $u=v$. If $H$ has \emph{Type} B, by Lemma \ref{typeb}, $u=v$. If $H$ has \emph{Type} C then, by Lemma \ref{typec}, $u=v$.
\end{proof}

\subsection{Upper bound on the size of $S$}

Lemma \ref{resolving} ensures that $\dim(G) \leq |S|$. So Theorem~\ref{thm:mainbis} holds if $|S| \le L(G)+6c$.
%, so we have to bound the size of $S$ to conclude. 
The set $S$ is a union of three sets that we will bound the size separately. We use the following result on minors to get the bounds.
 
Let $G$ be a multigraph. The graph $H$ is a {\em minor} of $G$ if $H$ can be obtained from $G$ via a sequence of edge deletions, vertex deletions and edge contractions (the edge contraction operation can create parallel edges between two vertices or loops). One can easily check that the minor operation can only decrease the cycle rank.

\begin{lemme}\label{boundMalt2}
$|M| \leq 2c(G)-2$. 
\end{lemme}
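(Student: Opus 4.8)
The goal is to bound $|M| = |X \cup \bigcup_i M_i|$ by $2c(G)-2$. The plan is to show that $M$ embeds naturally as a subset of the branch vertices (degree $\geq 3$ vertices) of a suitable multigraph minor of $G$, and then invoke the elementary fact that a multigraph with cycle rank $c$ and no degree-$1$ vertices has at most $2c-2$ vertices of degree $\geq 3$.

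\begin{proof}
Recall that $X$ is a feedback vertex set consisting only of vertices of degree at least $3$ in the graph $G'$ obtained from $G$ by iteratively deleting degree-$1$ vertices (and $G'$ is not a cycle). Since deleting degree-$1$ vertices and suppressing degree-$2$ vertices does not change the cycle rank, $c(G') = c(G) =: c$.

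First I would build a multigraph $H$ from $G'$ as follows: contract each connected component $G_i$ of $G' \setminus X$ down to its subtree $T_i$ spanning $N_i$ (equivalently, repeatedly delete degree-$1$ vertices of $G_i$ that are not in $N_i$ and suppress degree-$2$ vertices of $G_i$ that are not in $N_i$), then suppress all remaining degree-$2$ vertices of the whole graph. Each operation is a minor operation, so $c(H) \le c$; and in fact equality holds since none of these operations destroys a cycle (we only remove bridges and smooth paths). The vertex set of $H$ is exactly $X \cup \bigcup_i \{\text{vertices of } T'_i \text{ of degree} \ge 3\} = M$, because the degree-$3$-or-more vertices of $T'_i$ are precisely the branch vertices of $T_i$ together with the vertices of $N_i$ that have enough $X$-neighbours to become branch vertices — which is exactly how $M_i$ was defined. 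Moreover $H$ has no vertex of degree $1$: the vertices of $X$ have degree $\ge 3$ in $G'$ and this is preserved, and the added structure keeps every vertex of $M_i$ of degree $\ge 3$.

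Now I invoke the counting lemma: in any multigraph with minimum degree $\ge 2$ and cycle rank $c$, the number of vertices of degree $\ge 3$ is at most $2c-2$. This follows by suppressing all degree-$2$ vertices to get a multigraph $H^*$ with $n^*$ vertices all of degree $\ge 3$ and $m^*$ edges, same cycle rank $c = m^* - n^* + cc(H^*)$; then $2m^* = \sum_v \deg(v) \ge 3 n^*$ gives $n^* \le 2(m^* - n^*) = 2(c - cc(H^*)) \le 2c - 2$ (using that $H$, hence $H^*$, has a cycle so is not a forest, but is connected as a subgraph structure coming from connected $G$, so $cc(H^*) = 1$; if $H^*$ is a single vertex with loops the bound is immediate). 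Applying this to $H$ (after suppressing its degree-$2$ vertices, which removes no vertex of $M$ since all vertices of $M$ have degree $\ge 3$ in $H$) yields $|M| = |V(H^*)| \le 2c - 2$.

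The main obstacle is the bookkeeping in the second step: verifying precisely that the degree-$\ge 3$ vertices of the minor $H$ coincide with $M$, in particular that the pendant vertices artificially added to $T'_i$ (one per edge from $N_i$ to $X$) correctly account for the multi-edges of $H$ incident to $X$, so that no vertex of $N_i$ is wrongly promoted to or demoted from $M_i$. Once the correspondence $V(H) = M$ with minimum degree $\ge 3$ (after smoothing) is established, the counting is routine.
\end{proof}
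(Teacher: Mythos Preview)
Your proposal is correct and follows essentially the same approach as the paper. Both arguments build a multigraph minor of $G$ on vertex set $M$ in which every vertex has degree at least $3$, and then apply the handshake inequality $3|M|\le 2|E|$ together with $|E|\le |M|+c-1$ to conclude $|M|\le 2c-2$. The paper describes this minor directly by placing one edge for each Type~$A$/$B$ component of $G\setminus M$, whereas you obtain it by passing to the $2$-core $G'$ and suppressing degree-$2$ vertices; these two descriptions yield the same multigraph, and indeed your observation that $M$ coincides with the set of degree-$\ge 3$ vertices of $G'$ is exactly why the paper's claim ``every vertex in $K$ has degree at least $3$'' holds. The only cosmetic difference is that the paper disposes of the case $|M|\le 2$ by a one-line remark before constructing $K$, while you fold it into the connectedness of $H^*$.
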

\begin{proof}
If $|M|\leq 2$ then the inequality holds since we can assume $c \geq 2$. So we can assume that $|M| \ge 3$. Let $K$ be the multigraph (with possible loops) with vertex set $M$ where, for each connected component $H$ of $G \setminus M$ of \emph{Type} $A$ or $B$ with endpoints $x$ and $y$ in $M$ (that might be identical), we create an edge between $x$ and $y$ in $K$. Note that $K$ is a minor of $G$ as it can be obtained from $G$ by contracting edges in components $H$ of $G \setminus M$ into a single edge.
%Note that the degree of $x \in M$ in $K$ is the number of edges incident to $x$ with vertices in Type A or B components.

Every vertex in $K$ has degree at least $3$. The vertices of $M\setminus X$ have degree at least $3$ by definition of $M_i$ for every $i$. 
By construction of $X$, $x \in X$ has degree at least $3$ in the graph starting from $G$ and removing the degree one vertices. Three adjacent edges belong to cycles so contribute to the degree of $x$ in $K$ so $\deg_{K}(x) \geq 3$. 

We have $3 |V(K)| \le \sum_{v \in V(K)} \deg(v) = 2 |E(K)|$. So $3|M| \leq 2 |E|$. Since the cycle rank of $K$ is at most $c$, so $|E| \leq c + |M| - 1 $. A combination of these inequalities gives $|M| \leq 2c(G)-2$.
\end{proof}

\begin{lemme}\label{boundP2}
$|P| \leq c + |M|-1$.
\end{lemme}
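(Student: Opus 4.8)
The plan is to reuse the auxiliary multigraph $K$ built in the proof of Lemma~\ref{boundMalt2}. Recall that $K$ has vertex set $M$ and contains, for each connected component $H$ of $G \setminus M$ of Type $A$ or $B$ with endpoints $x,y \in M$ (possibly $x=y$, in which case $K$ gets a loop), exactly one edge joining $x$ and $y$. The first step I would take is to observe that $|P|$ equals the number of connected components of $G \setminus M$ of Type $A$ or $B$: each such component $H$ contributes exactly one vertex $\rho_H$ to $P$, and since distinct components of $G \setminus M$ are vertex-disjoint, the vertices $\rho_H$ are pairwise distinct. Hence $|P| = |E(K)|$, counting the loops of $K$ once each.

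Next I would invoke the fact, already used for Lemma~\ref{boundMalt2}, that $K$ is a minor of $G$ (it is obtained from $G$ by contracting each Type $A$ or Type $B$ component into a single edge and deleting everything else), so the cycle rank of $K$ is at most $c(G)=c$. Writing the cycle rank of $K$ via the standard identity as $|E(K)| - |V(K)| + cc(K) = |E(K)| - |M| + cc(K)$, the bound on the cycle rank gives $|E(K)| \le c + |M| - cc(K)$. Since we are in the case $c \ge 2$, the graph obtained from $G$ by iteratively removing degree-$1$ vertices still contains a cycle, so the feedback vertex set $X \subseteq M$ is nonempty; thus $M \ne \emptyset$ and $cc(K) \ge 1$. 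Combining, $|P| = |E(K)| \le c + |M| - 1$, which is the desired inequality.

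I do not expect a real obstacle here: the argument is a short counting argument riding on the setup already in place. The only two points deserving a line of care are (i) verifying that the $\rho_H$ are genuinely distinct, so that $|P| = |E(K)|$ and not merely $|P| \le |E(K)|$, and (ii) confirming that $K$ has at least one connected component, which is immediate once one notes $M \supseteq X \ne \emptyset$. Everything else is the identity $c(K) = |E(K)| - |V(K)| + cc(K)$ together with the monotonicity of cycle rank under taking minors, both of which are available from the excerpt.
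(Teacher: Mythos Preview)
Your argument is correct and follows essentially the same route as the paper: build a multigraph on $M$ with one edge per Type~$A$/$B$ component, observe it is a minor of $G$, and bound its edge count via the cycle-rank identity. The only cosmetic difference is that the paper redefines a fresh multigraph $K'$ (obtained by contracting every edge with exactly one endpoint in $M$, hence automatically connected), whereas you reuse the $K$ from Lemma~\ref{boundMalt2} and supply the extra line $cc(K)\ge 1$; both reach $|E|\le c+|M|-1$ the same way.
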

\begin{proof}
Let $K'$ be the multigraph (with loops) with vertex set $M$ and an edge between two vertices $x$ and $y$ if and only if there exists in $G \setminus M$ a connected component $H$ adjacent to $x$ and $y$. The graph $K'$ is a minor of $G$ since $K'$ can be obtained from $G$ by contracting edges with exactly one endpoint in $M$ until no such edge exists. One can easily notice that in $K'$ there is an edge $xy$ with multiplicity $k$ if and only if in $G \setminus M$ there are $k$ connected components attached to $x$ and $y$. 
Since $K'$ is a minor of $G$, $c(K') \leq c(G)$. As $K'$ contains $|M|$ vertices, $K'$ has at most $c+|M|-1$ edges. Thus, $G \setminus M$ has at most $c+|M|-1$ components of \emph{Type} A or B. Since $P$ contains one vertex in each component of \emph{Type} A or B, we have $|P| \leq c+|M|-1$.
\end{proof}

\begin{lemme}\label{sizecomp}
\[ \big| \bigcup_{H \text{con. comp. of } G \setminus M} S_H \big| \leq L(G) +c. \]
\end{lemme}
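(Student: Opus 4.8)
The plan is to compare the sum $\sum_H |S_H|$ with $L(G) + c$ by tracking, component by component, the contribution of each $S_H$ to $L(G)$ (coming from the major vertices of $G$ lying inside $H$) and accounting for the ``extra'' or ``saved'' vertices produced by the modifications in the construction of $S_H$ from the metric basis $S_H'$ of $H$. Recall $|S_H'| = \dim(H) = L(H)$ by Lemma~\ref{dimtree}. The key point is that $L(H)$ overestimates the contribution of $H$ to $L(G)$ only because of the vertices $x$ (and $y$) of $H$ adjacent to $M$: a terminal vertex or exterior degree-$2$ vertex of $H$ need not be terminal/exterior in $G$, since attaching $H$ to $M$ may raise its degree. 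The modifications in the bullet list are precisely designed to delete one basis vertex of $S_H'$ for each such ``fake'' terminal leaf of $H$, so that after the modification $|S_H|$ is at most the true contribution of $H$ to $L(G)$, plus a small correction absorbed by $c$.

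First I would set up the bookkeeping: for each component $H$, let $\ell(H)$ be the number of (major vertex, terminal vertex) incidences of $G$ that are ``witnessed inside $H$'' — i.e. pairs where the major vertex and the degree-$2$ path to its leaf both lie in $H$ — minus one per exterior major vertex of $G$ inside $H$, with the convention handling path-components. Summing $\ell(H)$ over all components $H$ of $G\setminus M$ gives at most $L(G)$, because every major vertex of $G$ and each of its terminal paths lies in a unique component of $G\setminus M$ (vertices of $M$ themselves contribute nothing to this sum, and $M\subseteq S$ is counted separately — wait, $M$ is not part of the union here, good). Then I would check, going through the four bullets of the construction of $S_H$, that $|S_H| \le \ell(H) + (\text{small nonnegative correction})$: for Type $A$, we remove one vertex for each of the (at most two) endpoints $x,y$ that is a leaf of $H$ attached to a major vertex of terminal degree $\ge 2$, exactly compensating the fact that $x$ (resp.\ $y$) is not terminal in $G$; for Type $C$, the same with one endpoint; for Type $B_1$ we \emph{add} the two terminal vertices of $x,y$ (a surplus of at most $2$ relative to $S_H'$), and for the path-components of Type $C$ attached to a common $w$ we set $S_H=\emptyset$ or a single vertex, which is again within $\ell(H)$ up to the one component per $w$ we empty. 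The surplus from Type $B_1$ and the savings have to be globally balanced against $c$.

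The main obstacle I expect is precisely this global accounting of the surplus: for components of Type $B_1$ we keep two extra vertices compared to the ``naive'' contribution, and also a component of Type $B$ is attached by two edges to a single vertex $\alpha\in M$, hence contributes a loop at $\alpha$ in the minor $K$ of Lemma~\ref{boundMalt2}, i.e.\ contributes (at least) one to the cycle rank; similarly each component of Type $A$ contributes an edge of the multigraph $K'$ and, together with the spanning-tree structure of $K'$, the number of components of Type $A$ or $B$ in excess of $|M|-1$ is at most $c$. So the plan is: bound the number of Type $A$/Type $B$ components by $c + |M| - 1$ (already done in Lemma~\ref{boundP2}), observe that each Type $B_1$ component carries a cycle of $G$ (the two $H$--$\alpha$ edges plus the $x$--$y$ path in $H$) not accounted for elsewhere, and charge the $+2$ surplus of each such component — or more carefully the net surplus after cancelling against the savings of the sibling path-components at the same $w$ — to that cycle, so that the total surplus is at most $c$. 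Combining $\sum_H \ell(H) \le L(G)$ with total surplus $\le c$ yields $\sum_H |S_H| \le L(G) + c$, as claimed. The delicate part is making sure the ``one emptied component per vertex $w$'' clause in the last bullet does not cause double counting, and that path-components which are \emph{not} emptied (those contributing a single endpoint vertex) still fit under $\ell(H)$; I would handle this by treating each maximal collection of path-components sharing an endpoint $w$ as a single unit whose total $|S_H|$-contribution is (number of such components) $-1$, matching exactly the terminal-degree-minus-one count at $w$ when $w$ is viewed as an exterior major vertex of $G$.
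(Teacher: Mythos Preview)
Your plan coincides with the paper's proof: both define the local contribution $\ell_H$ of each component $H$ to $L(G)$, verify $|S_H|\le \ell_H$ for Types~$A$ and~$C$, verify $|S_H|\le \ell_H+\text{(small constant)}$ for Type~$B$, sum over all $H$, and then charge the total Type~$B$ excess to $c$ via the observation that every Type~$B$ component, together with its unique neighbour $\alpha\in M$, closes a cycle of $G$, so there are at most $c$ such components.

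There is, however, a genuine gap in your Type~$B$ accounting. First, you omit Type~$B_2$ entirely; there $S_H=S_H'$, so $|S_H|=L(H)$, and the needed inequality $|S_H|\le\ell_H+1$ comes from the fact that at least one of $x,y$ is \emph{not} on a pending path of $H$ (this is precisely the definition of $B_2$), whence $\ell_H\ge L(H)-1$. Second, for Type~$B_1$ you assert a surplus of $+2$ relative to $\ell_H$ (and incidentally misread the construction: nothing is \emph{added} to $S_H'$; rather $S_H'$ is \emph{chosen} to contain the two terminals). A surplus of $+2$ per Type~$B$ component combined with at most $c$ such components yields only $L(G)+2c$, not $L(G)+c$. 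Your proposed rescue---``cancelling against the savings of the sibling path-components at the same $w$''---does not work: the one emptied path-component per $w\in M$ is exactly what makes the collection of Type~$C$ paths at $w$ contribute $\ter_G(w)-1$ and hence match their share of $L(G)$; that saving is already spent and cannot be reused to absorb Type~$B$ excess. The paper instead obtains $|S_H|\le\ell_H+1$ uniformly for every Type~$B$ component (for $B_1$ by a closer look at what $S_H$ actually contains, for $B_2$ as above), and it is this $+1$ rather than $+2$ that makes the final sum come out to $L(G)+c$.
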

\begin{proof}
For every connected component $H$ of $G \setminus M$, let $\ell_H=\sum_{x \in H} (\ter(x)-1)$ over all the major vertices $x$ in $H$. 
We consider the three types of components.

Let $H$ be a connected component of \emph{Type} $A$. We claim that $|S_H|=\ell_H$. Indeed let $\epsilon \in \{0,1,2\}$ be the number of vertices of $H$ adjacent to $M$ which are terminal vertices connected to a major vertex of degree at least $2$. By construction $L(H)-\epsilon=|S_H|=\dim(H)- \epsilon$.

Let $H$ be a connected component of \emph{Type} $B_1$. Then $L(H)=|S_H|+1$ and $\ell_H=L(H)-2$ by construction so $|S_H| \leq \ell_H +1$. \\
Let $H$ be a connected component of \emph{Type} $B$ not $B_1$. Then, $L(H)=|S_H|$ and $\ell_H \geq L(H)-1$ because $H$ has not \emph{Type} $B_1$ so $|S_H| \leq \ell_H +1$. \\
Let $H$ be a component of \emph{Type} $C$ not a path. By construction of $S_H$, $|S_H|=\ell_H$. Indeed, let $\epsilon \in \{0,1\}$ be the number of vertices of $H$ adjacent to $M$ which are terminal vertices connected to a major vertex of degree at least $2$. By construction $|S_H|=\dim(H)- \epsilon=L(H)-\epsilon$ and $\ell_H = L(H)-\epsilon$. \\
Let $H$ be a component of \emph{Type} $C$ with $H$ a path. If $H$ is connected to $M$ by a vertex $x$ which is not an extremity of $H$ then $m$ is a major vertex of terminal degree 2 in $G$. So $|S_H|=\ell_H=1$. If $H$ is a path connected to a vertex $m \in M$ by an extremity, let $k \in \mathbb{N}$ be the number of such components connected to $m$. Denote them by $H_1,H_2,...,H_k$. If $k\geq 2$ then $m$ is a major vertex in $G$ with terminal degree $k$ and $|\cup_{i \leq k} S_{H_i}| =k-1=|L(G) \cap \{m\} \cup (\cup_{i \leq k} H_i)|$. If $k=1$ then $S_H= \emptyset$ so $\ell_H =0$. \\
There are at most $c(G)$ components of \emph{Type} $B$: for each component $H$ of \emph{Type} $B$ we can found a cycle in $G$ by adding the vertex of $M$ adjacent to $H$. By definition of $c(G)$, this gives at most $c(G)$ components of \emph{Type} $B$. Summing the inequalities gives the result:
\[
| \cup S_H | = \sum_{\text{\emph{Type} $A$}} |S_H| + \sum_{\text{\emph{Type} $B$}} |S_H| +\sum_{\text{\emph{Type} $C$}} |S_H| \] \[ \leq
\sum_{\text{\emph{Type} $A$}} \ell_H + \sum_{\text{\emph{Type} $B$}} (\ell_H+1) +\sum_{\text{\emph{Type} $C$}} \ell_H
\leq
L(G)+c(G)
\]
\end{proof}

Finally, we can prove Theorem~\ref{thm:mainbis}:

\begin{proof} 
The set $S$ is a resolving set so $\dim(G) \leq |S|$. By definition $S= M \cup P \cup (\bigcup_H S_H)$. By Lemma \ref{boundMalt2}, $|M| \leq 2c(G)$, by Lemma \ref{boundP2}, $|P| \leq 3c$ and by Lemma \ref{sizecomp}, $| \cup S_H | \leq L(G) +c(G)$. Summing the inequalities give $\dim(G) \leq L(G) + 6c(G)$.
\end{proof}

\begin{figure}[ht]
    \centering
    \includegraphics[scale=1.1]{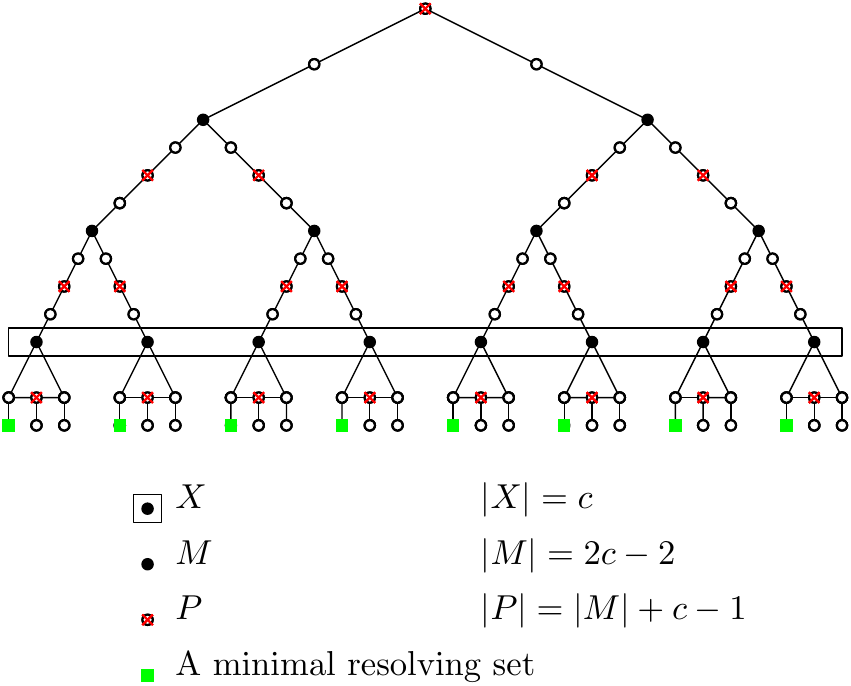}
    \caption{Tightness of Lemmas \ref{boundMalt2}, \ref{boundP2} and \ref{sizecomp}.}
    \label{figtightlemme}
\end{figure}

One can naturally ask if this upper bound is optimal.
Figure \ref{figtightlemme} gives an example of graph where Lemmas~\ref{boundMalt2},~\ref{boundP2} and~\ref{sizecomp} are tight. It ensures that our analysis of the construction is optimal but not necessarily the construction itself. Indeed, the metric dimension of the graph of Figure~\ref{figtightlemme} is $8$ and the square vertices form a metric basis.

\section{Metric dimension and zero forcing sets}\label{section4}

In this section, we study how the metric dimension and the zero forcing number can be modified when an edge is added to a graph.
Then we prove a weakening of Conjecture \ref{conj}, as a consequence of Theorem~\ref{thm:mainbis}.
We then give a short proof of Conjecture~\ref{conj} for unicyclic graphs. We will then generalize this result to prove the conjecture for cactus graphs. We finally prove a strengthening of Conjecture~\ref{conj} when the graph is unicyclic and the unique cycle has odd length.

\subsection{Edge modifications and consequences for Conjecture~\ref{conj}}\label{sec:consZF}

The following lemma ensures that the variations of the zero forcing number when an edge is added or deleted an edge are small \cite{EDHOLM2012}. 

\begin{lemme}\cite{EDHOLM2012}\label{varZ}
Let $G=(V,E)$ be a graph and $e \in E(G)$, then $Z(G)-1 \leq Z(G-e) \leq Z(G)+1$.
\end{lemme}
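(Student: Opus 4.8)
The plan is to establish the two bounds $Z(G-e)\le Z(G)+1$ and $Z(G-e)\ge Z(G)-1$ separately, in each case by taking a minimum zero forcing set of one of the two graphs and adjoining a single, well-chosen endpoint of $e=uv$ to obtain a zero forcing set of the other. Throughout I fix, along with the minimum zero forcing set, an explicit \emph{sequential} forcing process (forces applied one at a time), so that every vertex $w$ has a well-defined time $t_w$ at which it turns black (time $0$ for the vertices of the starting set). The single structural observation driving both arguments is that modifying the edge $uv$ changes only the neighbourhoods of $u$ and $v$, so a force $a\to b$ can be disrupted only when $a\in\{u,v\}$; and blackening $u$ or $v$ in advance neutralises exactly that disruption.

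For $Z(G-e)\le Z(G)+1$: let $B$ be a minimum zero forcing set of $G$ with a forcing process. If the edge $uv$ is never used in that process, replaying the same forces in $G-e$ is still valid, since deleting $uv$ only shrinks $N(u)$ and $N(v)$ and a vertex with a unique white neighbour keeps at most that unique white neighbour; hence $B$ itself is a zero forcing set of $G-e$. If instead $uv$ is used, say $u$ forces $v$ at some step, then I claim $B\cup\{v\}$ is a zero forcing set of $G-e$: replaying the process in $G-e$, the step ``$u$ forces $v$'' is simply skipped because $v$ is already black, and by the observation above every other step remains valid; a straightforward induction on the forcing steps shows that the black set in the $G-e$ process dominates the black set in the $G$ process at every stage, hence eventually equals $V$. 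Either way $Z(G-e)\le |B|+1=Z(G)+1$.

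For $Z(G-e)\ge Z(G)-1$, equivalently $Z(G)\le Z(G-e)+1$: write $G'=G-e$, let $B$ be a minimum zero forcing set of $G'$ with a forcing process, let $t_u,t_v$ be the times at which $u$ and $v$ turn black, and assume without loss of generality $t_u\le t_v$. I claim $B\cup\{v\}$ is a zero forcing set of $G=G'+uv$. Replay the forcing process of $G'$ in $G$. Since $v$ is black from the start, any step $u\to b$ is unaffected, the extra neighbour $v$ of $u$ being already black. A step $v\to b$ could fail in $G$ only while the extra neighbour $u$ of $v$ is still white; but every step in which $v$ forces occurs strictly after time $t_v\ge t_u$, while all steps of time at most $t_u$ involve no force by $v$, so these carry over to $G$ unchanged and turn $u$ black by time $t_u$ there, after which no later $v$-step is blocked. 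An induction on the forcing steps then shows the black set in the $G$ process dominates that in the $G'$ process throughout, so $B\cup\{v\}$ forces all of $G$, giving $Z(G)\le |B|+1=Z(G-e)+1$.

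The routine part is the induction verifying that each replayed force remains legitimate. The genuinely delicate point, and the thing one must get right, is the choice of endpoint to adjoin: the endpoint forced through $e$ for the first bound, and the later-blackened endpoint for the second; adjoining the other endpoint can leave the replayed process stuck. Edge cases in which the chosen endpoint already lies in the starting set are immediate and only improve the bound.
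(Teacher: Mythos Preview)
Your proof is correct. The paper does not supply its own proof of this lemma; it is quoted from \cite{EDHOLM2012} and used as a black box. Your argument is the standard one: for each inequality you take a minimum zero forcing set of one graph, adjoin a suitably chosen endpoint of $e=uv$, and verify by a domination-type induction that the recorded forcing sequence replays in the other graph. The delicate choices you highlight---the endpoint that gets forced through $e$ for the upper bound, and the endpoint with the larger $t$-value for the lower bound---are exactly the right ones, and your justification that all forces with forcer outside $\{u,v\}$ are unaffected while forces by $u$ or $v$ are handled by the pre-blackened endpoint is sound. One micro-remark: in your first inequality, Case~1 (the edge $uv$ is not used), the replay also has to handle forces by $u$ or $v$ themselves; this is fine because if $u$ forces some $b\neq v$ in $G$ then $v$ was already black at that moment, but it is worth saying explicitly rather than only invoking ``shrinking neighbourhoods''.
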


We have a more precise result if $Z(G+e) < Z(G)$ which will be useful later.
\begin{lemme}
\label{rev}
Let $G=(V,E)$ be a graph and $u$ and $v$ two vertices of $G$ such that $e=uv \notin E$. If $Z(G+e) < Z(G)$, then for any minimum zero forcing set of $G+e$, at some step $u$ forces $v$ or $v$ forces $u$. 
\end{lemme}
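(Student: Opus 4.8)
The plan is to argue by contradiction on a fixed minimum zero forcing set $Z$ of $G+e$, assuming that throughout the forcing process on $G+e$ neither $u$ forces $v$ nor $v$ forces $u$. The key observation is that a sequence of applications of the color change rule is recorded by a set of \emph{forcing edges}: when a vertex $w$ is forced by $w'$, the edge $ww'$ is used. If the edge $e=uv$ is never used as a forcing edge during the process that starts from $Z$ in $G+e$, then exactly the same sequence of color changes is valid in $G$: indeed, whether a black vertex has a unique white neighbour only depends on its neighbourhood, and removing the unused edge $e$ changes no neighbourhood in a way that affects any of the steps performed. Hence $Z$ would already be a zero forcing set of $G$, contradicting $Z(G+e) < Z(G) \le |Z|$ since $|Z| = Z(G+e)$.

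First I would set up this notion precisely: fix a minimum zero forcing set $Z$ of $G+e$ with $|Z| = Z(G+e)$, and fix a terminating sequence of applications of the color change rule. This sequence assigns to each vertex (other than the initially black ones) the black neighbour that forced it, and in particular determines for each step which edge is used. Then I would observe that since $Z(G+e)<Z(G)$, the set $Z$ is \emph{not} a zero forcing set of $G$; so the chosen forcing sequence cannot be replayed verbatim in $G$. The only edge present in $G+e$ but absent in $G$ is $e=uv$, so some step of the sequence must rely on $e$ — i.e., at some step a black vertex has all of its neighbours black except one, and $e$ is incident to the pair (forcer, forced). That incident pair is exactly $\{u,v\}$, so at that step $u$ forces $v$ or $v$ forces $u$.

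The remaining point to nail down is the claim that if $e$ is never the forcing edge, the sequence transfers to $G$. Here I would note that the color change rule applied at a step only examines, for the forcing vertex $w$, whether $N_{G+e}(w)$ contains exactly one white vertex; this is identical to the condition in $G$ unless $w \in \{u,v\}$, and even for $w\in\{u,v\}$ the two conditions can differ only when the extra neighbour supplied by $e$ is the one white vertex — but that is precisely the case where $e$ would be used as the forcing edge. So under the no-force-across-$e$ assumption every step is valid in $G$ as well, and the initially black set $Z$ forces all of $V$ in $G$, the desired contradiction. I do not expect a real obstacle here; the only care needed is the bookkeeping that "$e$ is the forcing edge at some step" is equivalent to "$u$ forces $v$ or $v$ forces $u$ at some step", which is immediate from the definitions, and that a single minimum zero forcing set of $G+e$ was fixed at the outset so the conclusion is about that arbitrary set, as stated.
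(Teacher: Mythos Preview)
Your argument is correct and matches the paper's approach exactly: if the edge $e$ is never used as a forcing edge in a sequence from a minimum zero forcing set $Z$ of $G+e$, then the same sequence is valid in $G$, so $Z$ would be a zero forcing set of $G$, contradicting $|Z|=Z(G+e)<Z(G)$. Your careful bookkeeping about when the forcing condition for $w\in\{u,v\}$ could differ between $G$ and $G+e$ is more explicit than the paper's one-line proof, but the underlying idea is identical.
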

\begin{proof}
     By contradiction if a zero forcing set of minimal size for $G+e$ does not use the edge $e$ then it is a zero forcing set of $G$ so $Z(G) \leq Z(G+e)$.
\end{proof}

A similar statement does not hold for the metric dimension. However, Lemma \ref{lemme:vardim} gives some conditions where a similar result holds. Using these results we can get inequalities between the metric dimension and the zero forcing number for some classes of graphs.

\begin{cor}\label{ineq2}
Let $G$ be a connected unicyclic graph and $e$ be an edge such that $T=G-e$ is a tree. Then, $\dim(G) \leq Z(G)+2$.
\end{cor}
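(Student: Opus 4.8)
The plan is to chain together the two edge-deletion stability results already available, one for the metric dimension and one for the zero forcing number. Since $G$ is connected and unicyclic and $T=G-e$ is a tree, the removed edge $e$ must lie on the unique cycle $C$ of $G$, i.e. $e\in E(C)$. First I would check that the hypothesis of Lemma~\ref{lemme:vardim} is automatically satisfied in this situation: deleting $E(C)$ from a unicyclic graph leaves a forest in which no two vertices of $C$ can belong to the same component, since such a component would give a second (internally $C$-avoiding) path between two vertices of $C$ and hence, together with an arc of $C$, a second cycle, contradicting $c(G)=1$. So the components $G_i$ of the cycle vertices in $G\setminus E(C)$ are pairwise vertex-disjoint, and Lemma~\ref{lemme:vardim} applies and yields $\dim(G)\le \dim(G-e)+1=\dim(T)+1$.

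Next I would move to the zero forcing side. As $T$ is a tree, Lemma~\ref{treecomp} gives $\dim(T)\le Z(T)$, and applying Lemma~\ref{varZ} to the edge $e\in E(G)$ gives $Z(T)=Z(G-e)\le Z(G)+1$. Combining the three inequalities,
\[
\dim(G)\ \le\ \dim(T)+1\ \le\ Z(T)+1\ \le\ Z(G)+2,
\]
which is exactly the claimed bound.

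This corollary is essentially a warm-up consequence of Lemma~\ref{lemme:vardim} together with the known tree comparison $\dim(T)\le Z(T)$; I do not expect a genuine obstacle. The only point that deserves a careful sentence is the verification that the disjointness hypothesis of Lemma~\ref{lemme:vardim} holds for a unicyclic graph, and that $e$ is forced to lie on $C$; once these two observations are recorded, the proof is just the short chain of inequalities above. (The sharper bound $\dim(G)\le Z(G)+1$ for general unicyclic graphs, and $\dim(G)\le Z(G)$ when the cycle is odd, are obtained later by a more delicate analysis.)
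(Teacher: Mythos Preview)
Your proof is correct and follows exactly the same approach as the paper: chain Lemma~\ref{lemme:vardim}, Lemma~\ref{treecomp}, and Lemma~\ref{varZ} to obtain $\dim(G)\le \dim(T)+1\le Z(T)+1\le Z(G)+2$. You even spell out two points the paper leaves implicit, namely that $e$ must lie on the cycle and that the disjointness hypothesis of Lemma~\ref{lemme:vardim} is automatic for unicyclic graphs.
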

\begin{proof}
By Lemma~\ref{treecomp}, $\dim(T) \leq Z(T)$. Lemmas~\ref{lemme:vardim} and~\ref{varZ} ensure that $\dim(G) \leq Z(G)+2$.
\end{proof}

Eroh et al. \cite{eroh2017comparison} proved Conjecture~\ref{conj} for unicyclic graphs via a very long case analysis. They start from a tree $T$ achieving $\dim(T)=Z(T)$ and make a complete study of all the places where an edge could be added. We drastically simplify their proof by starting from a unicyclic graph $G$ and delete a well-chosen edge.

\begin{lemme}
\label{zero}
Let $G=(V,E)$ be a graph which is not a tree and $C \subseteq V$ a cycle of $G$. Then, there exists an edge $e \in E(C)$ such that $Z(G-e) \leq Z(G)$.
\end{lemme}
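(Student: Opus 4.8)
Fix a zero forcing set $Z$ of $G$ of minimum size together with a sequence of forces that colours all of $V$ black. The idea is to track how the cycle $C$ gets coloured by this process and to identify a ``last'' edge of $C$ that the forcing uses, then argue that deleting that edge does not hurt. More precisely, at the moment the last white vertex of $C$ is turned black, that vertex $w$ is forced along some edge $ab$; if $ab \in E(C)$ then I claim $e := ab$ works, since after deleting $e$ we can replay the \emph{same} sequence of forces in $G-e$: every force used an edge of $G$, and the only edge of $C$ that was ever used to force a vertex of $C$ can be shown (by a short argument on cycles, see below) to be exactly this last edge $e$, so removing it leaves all the remaining forces valid, and $Z$ is still a zero forcing set of $G-e$, giving $Z(G-e)\le |Z| = Z(G)$.

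\textbf{Key steps in order.} First I would set up notation: let $Z$ be a minimum zero forcing set of $G$ and fix a valid forcing sequence $F$. Second, consider the vertices of $C$ in the order in which they become black under $F$; let $w$ be the last such vertex and let $e=ab$ be the edge along which $w$ is forced (so $a$ is black, $w=b$ is the unique white neighbour of $a$ at that step). Third, observe that at least one vertex of $C$ is \emph{not} in $Z$ (otherwise $G$, being not a tree, would have $Z=V$, which is fine, but then $C\subseteq Z$ and we can pick $e$ arbitrarily and replay $F$ in $G-e$ trivially since no force ever uses an edge to colour an already-black vertex — handle this degenerate case separately and quickly). Fourth, and this is the crux, show that no force in $F$ uses an edge of $C$ \emph{other than} $e$: a force using an edge $xy$ of $C$ colours one endpoint, and since all of $C$ except $w$ is coloured before $w$, any edge of $C$ used to force must force $w$ itself, hence must be incident to $w$; one then argues that only one edge incident to $w$ on $C$ can be the forcing edge (the other neighbour of $w$ on $C$ is already black, so it cannot be the ``unique white neighbour'' witness) — this pins the used cycle-edge down to $e$. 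Finally, delete $e$ from $G$ and replay $F$ step by step: each force either used an edge not in $E(C)$ (still present in $G-e$) or used $e$ to force $w$; in the latter case, at that step $w$ still has a black neighbour in $G-e$, namely its other $C$-neighbour (already black) or any black non-$C$-neighbour, so\ldots — actually the cleanest finish is: since $F$ never uses $e$ to force anything \emph{before} $w$ is coloured, and $w$ is the last vertex of $C$ to be coloured, we may simply delete $e$ and note that $w$'s other cycle-neighbour $w'$ is black at that moment; if $w'$ had $w$ as its unique white neighbour we could force $w$ from $w'$ instead, and in general a case check shows $Z$ remains a zero forcing set of $G-e$.

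\textbf{Main obstacle.} The delicate point is the last step: after removing $e$, the vertex $a$ may no longer be able to force $w$ (its degree dropped, and possibly $w$ was genuinely its last white neighbour via $e$). One must show that some \emph{other} vertex can force $w$ in $G-e$ at the corresponding stage — the natural candidate is the neighbour $w'$ of $w$ along $C$ on the other side, which is black by the time $w$ is the last white vertex of $C$; but $w'$ might have other white neighbours outside $C$ at that moment. The fix is to choose the forcing sequence and the edge $e$ more carefully: rather than taking $w$ to be the last vertex of $C$ coloured, take $e=ab\in E(C)$ to be an edge \emph{actually used} by $F$ to perform a force (such an edge exists unless $F$ never forces along $C$, in which case $F$ is already a valid forcing sequence for $G-e$ for \emph{any} $e\in E(C)$ and we are done). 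Among all cycle-edges used by $F$, pick the one used at the \emph{latest} time step; then at all strictly later steps no cycle-edge is used, so deleting this $e$ invalidates only the single force it performed, and by the time of that force the forcing vertex $a$ had $w$ as its unique white neighbour, meaning the \emph{other} neighbour of $w$ on $C$ was already black — replacing the force ``$a$ forces $w$'' by ``(that other $C$-neighbour of $w$) forces $w$'' is legitimate in $G-e$ provided that neighbour also had $w$ as its unique white neighbour at that step, which follows because $w$ was $a$'s unique white neighbour and\ldots this still needs the small extra observation that we may assume $a$ lies on $C$ (it does, since $ab\in E(C)$) so $a$'s only possibly-white neighbours besides $w$ are off $C$, and a clean induction/reordering handles the rest. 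I expect this reordering argument to be the part requiring the most care.
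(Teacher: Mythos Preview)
Your strategy of deleting a cycle-edge that \emph{is} used as a forcing edge, and then repairing the sequence, is the wrong direction and leads to the gap you yourself flag. Two concrete problems. First, the claim in your ``fourth step'' that any cycle-edge used by $F$ must force the last-coloured vertex $w$ is simply false: on a $4$-cycle $v_1v_2v_3v_4$ with $Z=\{v_1,v_2\}$ the forces $v_1\to v_4$ and $v_2\to v_3$ both use cycle-edges and hit different vertices. Second, in your revised attempt (delete the latest-used cycle-edge $e=ab$ and replace the force $a\to w$ by $w'\to w$ for the other $C$-neighbour $w'$ of $w$), $w'$ need not be black at that step, and even if it is, it may have white neighbours off $C$; nothing you have written controls this, and the vague ``reordering'' does not close the hole.

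The paper avoids all of this by looking at the \emph{first} vertex of $C$ to become black rather than the last. If $w$ is the first vertex of $C$ coloured, then neither of its $C$-neighbours $w_1,w_2$ forced $w$ (they were both white), and $w$ itself can later force at most one of them; hence the edge $ww_2$ (say) is never a forcing edge at all. Delete that edge: the exact same forcing sequence is still valid in $G-e$, so $Z(G-e)\le |Z|=Z(G)$, with no repair needed. Equivalently: the forcing edges form vertex-disjoint directed paths, hence a forest, so they cannot contain all edges of $C$; pick any unused cycle-edge. The key idea you are missing is that you should be looking for a \emph{non}-forcing cycle-edge rather than trying to excise a forcing one.
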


\begin{proof}
Let $Z \subseteq V$ be a minimum zero forcing set of $G$. Let $F \subseteq E$ be the \emph{forcing edges} in a sequence starting from $Z$, i.e., $uv \in F$ if and only if at some stage $u$ forces $v$ or $v$ forces $u$. 

We claim that at least one edge of $C$ is not in $F$. Indeed, if $u$ forces $v$ then $u$ is turned black before $v$. So, the first vertex $w$ of the cycle that is turned black cannot be turned black because of an edge of $C$ (such a vertex can already be black at the beginning of the proceed). Let $w_1,w_2$ be the two neighbors of $w$ on $C$. The vertex $w$ can force at most one of its two neighbors. So, without loss of generality, $w_2$ is not forced by $w$ and is turned black after $w$. So, if we remove the edge $e=ww_2$, $Z$ is still a forcing set of $G-e$ with the same sequence of applications of the color change rule that turned $G$ into black. Therefore, $Z(G-e) \leq Z(G)$.
\end{proof}

We obtain as a corollary the main result of~\cite{eroh2017comparison}.
\begin{cor}\label{unicycle}
Let $G$ be a unicyclic graph. Then, $\dim(G) \leq Z(G)+1$.
\end{cor}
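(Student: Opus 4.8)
The plan is to combine Lemma~\ref{zero} with the already-established inequality for trees (Lemma~\ref{treecomp}) and with Lemma~\ref{lemme:vardim}, exactly as suggested by the surrounding discussion. Let $G$ be a unicyclic graph with unique cycle $C$. First I would apply Lemma~\ref{zero} to get an edge $e \in E(C)$ with $Z(G-e) \le Z(G)$. Since $G$ is unicyclic, $T := G-e$ is a spanning tree of $G$, so by Lemma~\ref{treecomp} we have $\dim(T) \le Z(T)$.

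Next I would control $\dim(G)$ in terms of $\dim(T)$ using Lemma~\ref{lemme:vardim}. Here one must check the hypothesis of that lemma: writing $V(C) = \{v_0,\dots,v_k\}$ and letting $G_i$ be the connected component of $v_i$ in $G \setminus E(C)$, we need $V_i \cap V_j = \emptyset$ for $i \ne j$. This holds precisely because $G$ is unicyclic: if some $G_i$ and $G_j$ shared a vertex, there would be a second cycle in $G$ (a path through $G_i \cup G_j$ together with an arc of $C$), contradicting $c(G)=1$. With the hypothesis verified, Lemma~\ref{lemme:vardim} gives $\dim(G) \le \dim(G-e) + 1 = \dim(T) + 1$.

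Chaining the three inequalities yields
\[
\dim(G) \le \dim(T) + 1 \le Z(T) + 1 \le Z(G) + 1,
\]
where the last step uses $Z(T) = Z(G-e) \le Z(G)$ from Lemma~\ref{zero}. This is the desired bound.

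The only genuine subtlety — and the step I would be most careful about — is making sure the edge produced by Lemma~\ref{zero} is a \emph{cycle} edge (so that $G-e$ is a tree rather than a disconnected graph) and simultaneously satisfies the disjointness hypothesis needed to invoke Lemma~\ref{lemme:vardim}. Both are automatic here: Lemma~\ref{zero} explicitly returns an edge of $E(C)$, and the disjointness of the branches $G_i$ is forced by unicyclicity. So there is no real obstacle; the corollary is essentially a three-line composition of prior results, which is exactly the point the authors are making about simplifying the original proof of Eroh et al.
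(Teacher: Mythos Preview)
Your proof is correct and follows essentially the same approach as the paper: apply Lemma~\ref{zero} to obtain $e\in E(C)$ with $Z(G-e)\le Z(G)$, use Lemma~\ref{lemme:vardim} to get $\dim(G)\le\dim(G-e)+1$, and invoke Lemma~\ref{treecomp} on the tree $G-e$. Your explicit verification that the components $G_i$ are pairwise disjoint (needed for Lemma~\ref{lemme:vardim}) is a detail the paper glosses over here but spells out in the cactus case, so your write-up is if anything slightly more careful.
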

\begin{proof}
Let $e$ be an edge of $C$ such that $Z(G-e) \leq Z(G)$. Such an edge exists by Lemma~\ref{zero} and according to Lemma~\ref{lemme:vardim}, $\dim(G) \leq \dim(G-e)+1$. Moreover, by Lemma~\ref{treecomp},  $\dim(G-e) \leq Z(G-e)$ since $G-e$ is a tree. The combination of these three inequalities gives $\dim(G) \leq Z(G)+1$.
\end{proof}
 
All these results together with Theorem~\ref{thm:mainbis} allow us to prove a weakening of Conjecture~\ref{conj} which we restate as follows:

\thmZFdim*

\begin{proof}
By induction with Lemma~\ref{zero}, there exists a spanning tree $T$ such that $Z(T) \leq Z(G)$. By Lemma \ref{treecomp}, $\dim(T) \leq Z(T)$ and thus, $\dim(G) \leq Z(G)+6c(G)$ by Theorem \ref{thm:mainbis}.
\end{proof}

We generalize the proof of Conjecture~\ref{conj} for unicyclic graphs to cactus graphs. Almost the same techniques can be applied. First, we define the cactus graphs class.
\begin{defn}
Any graph $G$ is a cactus graph if any edge $e \in E(G)$ is part of at most one cycle of~$G$.
\end{defn}
\begin{theorem}\label{edgedisjoint}
Let $G=(V,E)$ be a cactus graph. Then, $\dim(G) \leq c(G) +Z(G)$.
\end{theorem}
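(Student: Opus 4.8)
The plan is to proceed by induction on the number $c(G)$ of independent cycles. When $c(G)=0$ the graph is a tree and the inequality $\dim(G)=L(G)\le Z(G)$ follows from Lemma~\ref{treecomp} (or directly from Lemma~\ref{lowerbound} and Lemma~\ref{dimtree}). When $c(G)=1$ the graph is unicyclic, and Corollary~\ref{unicycle} gives exactly $\dim(G)\le Z(G)+1$, which is the base case of the induction.

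\textbf{Inductive step.} Now suppose $c(G)\ge 2$ and that the statement holds for every cactus graph with fewer independent cycles. Since $G$ is a cactus with at least two cycles, we can pick a cycle $C$ of $G$ that is, in a suitable sense, ``outermost'': formally, choose $C$ so that at most one of the connected components of $G\setminus E(C)$ contains another cycle. Such a cycle exists because the block structure of a cactus is a tree of blocks (each block being either a single edge or a cycle), and one can take $C$ to be a leaf block in this block tree. By Lemma~\ref{zero} applied to the cycle $C$, there is an edge $e\in E(C)$ with $Z(G-e)\le Z(G)$; moreover $G-e$ is again a cactus (removing an edge cannot create a chord in any remaining cycle), and $c(G-e)=c(G)-1$. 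We then want to apply Lemma~\ref{lemme:vardim} to conclude $\dim(G)\le\dim(G-e)+1$. To do so we must check the hypothesis of Lemma~\ref{lemme:vardim}: writing $V(C)=\{v_0,\dots,v_k\}$ and letting $G_i$ be the connected component of $v_i$ in $G\setminus E(C)$, we need $V_i\cap V_j=\emptyset$ for $i\ne j$. This is exactly the statement that no vertex outside $C$ is reachable from two different vertices of $C$ within $G\setminus E(C)$, which holds because $G$ is a cactus: if such a vertex existed it would lie on a second cycle through two vertices of $C$, giving an edge in more than one cycle. Then by induction $\dim(G-e)\le Z(G-e)+c(G-e)\le Z(G)+c(G)-1$, and adding the $+1$ from Lemma~\ref{lemme:vardim} yields $\dim(G)\le Z(G)+c(G)$, completing the induction.

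\textbf{Main obstacle.} The delicate point is verifying the hypotheses that allow Lemma~\ref{lemme:vardim} and Lemma~\ref{zero} to be chained: namely that after deleting $e$ we still have a cactus with the disjointness property of the components hanging off the relevant cycle, and — more subtly — that the \emph{same} cycle $C$ can be used for both lemmas, or at least that we can always find \emph{some} cycle and \emph{some} edge of it serving both purposes. Lemma~\ref{zero} produces an edge $e$ on $C$ with $Z(G-e)\le Z(G)$ for \emph{any} chosen cycle $C$, so this is not a real constraint; the cactus property then guarantees the $V_i\cap V_j=\emptyset$ hypothesis of Lemma~\ref{lemme:vardim} automatically for every cycle of $G$. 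Thus the argument is genuinely uniform and the induction goes through without needing the outermost-cycle refinement at all — although phrasing it via a leaf block of the block tree makes the bookkeeping on $c(\cdot)$ and on the cactus property after deletion cleanest. The only thing to be careful about is the trivial case where $G-e$ is disconnected or where $C$ has length giving a degenerate case in Lemma~\ref{lemme:vardim}; these are handled exactly as in the proof of Corollary~\ref{unicycle}.
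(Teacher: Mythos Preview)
Your proposal is correct and follows essentially the same route as the paper: induction on $c(G)$, with Lemma~\ref{zero} providing an edge $e$ of some cycle $C$ satisfying $Z(G-e)\le Z(G)$, the cactus property guaranteeing the disjointness hypothesis of Lemma~\ref{lemme:vardim}, and the induction hypothesis closing the argument. A few of your embellishments are unnecessary: the paper uses only $c(G)=0$ as base case (Corollary~\ref{unicycle} is not needed separately), any cycle $C$ works so the ``outermost'' choice via the block tree is superfluous (as you yourself note), and $G-e$ can never be disconnected since $e$ lies on a cycle---so that worry can be dropped.
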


\begin{proof}
We prove this inequality by induction on $c(G)$. If $c(G)=0$, then $G$ is a tree and $\dim(G) \leq Z(G)$. If $c(G) >0$ let $C$ be a cycle of $G$. By Lemma~\ref{zero}, there exists an edge $e \in C$ such that $Z(G-e) \leq Z(G)$. By induction $\dim(G-e) \leq Z(G-e) +c(G-e) \leq Z(G) + c(G-e) = Z(G) + c(G)-1$ since $c(G-e)=c(G)-1$ for any $e \in C$. \\
To conclude, let us prove that Lemma~\ref{lemme:vardim} can be applied. Let $v_1,v_2...,v_k$ be the vertices of $C$. Let $G_i$ be the connected component of $v_i$ in $G \setminus C$. Assume by contradiction that two subgraphs $G_i$ and $G_j$ with $1 \leq i < j \leq k$ are not disjoint. Then, there exists a path $P$ between the vertices $v_i$ and $v_j$ in $G \setminus C$. Then $G$ contains two cycles with common edges: $C$ and a cycle containing $P$ and a path in $C$ between $v_i$ and $v_j$, a contradiction. So, by Lemma~\ref{lemme:vardim}, $\dim(G) \leq \dim(G-e)+1$ and then $\dim(G) \leq c(G) +Z(G)$.
\end{proof}

As a corollary, we obtain that cactus graphs satisfy Conjecture~\ref{conj}. It improves a result of~\cite{eroh2017comparison} that ensures that $Z(G) \le \dim(G)+2c(G)$ if $G$ has no even cycles. If $G$ has no even cycles, all its cycles are edge disjoint. Indeed, if two odd cycles share at least one edge then $G$ contains an even cycle.

\subsection{Unicyclic graphs with an odd cycle}

In this section, we consider the case where $G$ is unicyclic and its cycle has odd length. In this case, we will improve the inequality of Corollary~\ref{unicycle} to get $\dim(G) \leq Z(G)$. Such a result cannot be extended to $G$ with an even cycle, see Figure~\ref{figzdim} for an example. The intuitive reason why there is a difference between odd and even cycles is that, by Lemma~\ref{cycle1}, any pair of vertices resolves an odd cycle while it is far from being true for even cycles.

Before proving the main result of this section, we need some technical lemmas. Let $k \geq 1$ and let $G=(V,E)$ be a graph containing a unique cycle $C$ of length $2k+1$. For $u\in C$, let $T_u$ be the connected component of $u$ in $G'=(V,E\setminus E(C))$ rooted in $u$. Note that $T_u$ is a tree.  We call $u$ the root of $T_u$.
We say that $T_u$ is \emph{trivial} if $T_u=\{u\}$, is a \emph{rooted path} if $T_u$ is a path with $u$ at one extremity and is a \emph{rooted tree} otherwise. Note that a rooted path can be trivial (otherwise specified).
For $u \in C$, if $T_u$ is not trivial we denote by $\ell_u$ the terminal degree of $u$ in $G$. Else we let $\ell_u=0$.

\begin{lemme}
\label{deg1}
Let $G$ be an odd unicyclic graph. If there exists a vertex $u \in C$ such that $\ell_u \geq 1$, then there is an edge $e \in E(C)$ incident to $u$ such that $Z(G-e) \leq Z(G)$. 
\end{lemme}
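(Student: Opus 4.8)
The plan is to analyze the structure of a minimum zero forcing set $Z$ of $G$ and its associated forcing process, then use the freedom given by the pending tree $T_u$ attached at $u$ together with the cycle $C$ to locate an edge of $C$ incident to $u$ that can be deleted without increasing $Z(G)$. First I would recall, as in the proof of Lemma~\ref{zero}, that if we fix a minimum zero forcing set $Z$ and a sequence of color change rule applications, then the set $F$ of forcing edges does not contain all of $E(C)$; in particular, the first vertex $w$ of $C$ to be turned black forces at most one of its two neighbors on $C$, so one edge $ww'$ of $C$ is unused and $Z(G-ww') \le Z(G)$. The difficulty is that this edge $ww'$ need not be incident to the prescribed vertex $u$, so the real work is to push this argument to make $w = u$.

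The key idea is to exploit the hypothesis $\ell_u \ge 1$: since $u$ has a terminal vertex in $G$, the tree $T_u$ is non-trivial and contains a pending path ending at a degree-one vertex. I would argue that we may choose the minimum zero forcing set $Z$ (or modify a given one without increasing its size) so that $u$ is the \emph{first} vertex of $C$ turned black. The intuition is that any forcing process on $G$ must, at some point, turn black the leaf at the end of the pending path hanging from $u$; tracing the forces along this path, one sees that either this path is forced ``outward'' from $u$ (in which case $u$ was black before the path, hence a candidate for being first on $C$), or it is forced ``inward'' toward $u$, in which case a vertex of $Z$ sits in the pending path and we can re-route the process. More carefully, I would take $Z$ minimum and consider the moment $u$ becomes black: if $u$ is forced by its neighbor on $C$, then $u$ is not the first vertex of $C$ to be black, but then the first such vertex $w$ lies on $C$ and the edge argument above gives an unused edge not necessarily at $u$; to fix this, swap an element of $Z$ so that the forcing along $C$ starts from $u$ instead — concretely, if $z$ is the vertex of $Z$ whose forces eventually reach $C$ at $w$, one checks $(Z \setminus \{z\}) \cup \{u\}$ (or a leaf of $T_u$) is still a zero forcing set of the same size, because the pending structure at $u$ lets the process ``seed'' at $u$ and then propagate around $C$ exactly as before from $w$, while the branch that previously needed $z$ is now fed from $u$'s side.

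Once we have arranged that $u$ is the first vertex of $C$ to turn black, the conclusion is immediate by the Lemma~\ref{zero} argument localized at $u$: $u$ has two neighbors $w_1, w_2$ on $C$, it can force at most one of them, say it does not force $w_2$, so $w_2$ is turned black strictly after $u$ by some vertex other than $u$ along the edge $e = uw_2 \in E(C)$; therefore removing $e$ leaves $Z$ a zero forcing set of $G-e$ with the same color change sequence, giving $Z(G-e) \le |Z| = Z(G)$, and $e$ is incident to $u$ as required.

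The main obstacle I anticipate is the swapping step: verifying rigorously that a minimum zero forcing set can be modified so that the propagation on $C$ begins at $u$, without increasing its size and without breaking the forcing of the rest of the graph. This requires a careful case analysis depending on whether the pending path at $u$ is forced toward or away from $u$, on whether $u \in Z$ already, and on how the forces entering $C$ are organized — essentially showing that the "token" used to start forcing $C$ can always be relocated to $u$ using the leaf resource guaranteed by $\ell_u \ge 1$. The rest of the proof is a direct reprise of the first-vertex-on-the-cycle argument from Lemma~\ref{zero}.
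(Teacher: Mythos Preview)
Your plan aims at a stronger intermediate goal than is needed, and the ``swapping step'' you flag as the obstacle is indeed a real gap. Replacing some $z \in Z$ (the seed whose forces first reach $C$) by $u$ or by a leaf of $T_u$ has no reason to yield a zero forcing set: the branch of $G$ that used to be fed by $z$ may now never turn black, and the leaf in $T_u$ only helps you propagate toward $u$, not into whatever subtree $z$ was serving. You would need a much finer structural statement about how the forcing chains are organized to justify such a global swap, and you give none.

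The paper's argument sidesteps this entirely: it never tries to make $u$ the first vertex of $C$ to turn black. It simply shows that, in some minimum zero forcing sequence, at most one of the two $C$-edges at $u$ is a forcing edge. Take the degree-two pending path $P$ from $u$ to a terminal vertex $l$ (guaranteed by $\ell_u \ge 1$). Three cases: (i) if $u \in Z$, then $u$ forces at most one neighbor, so at most one $C$-edge at $u$ is used; (ii) if some $x \in P \cap Z$, swap $x$ with the leaf $l$ (a purely local swap inside $P$, clearly still forcing), and now $l$ forces along $P$ up to $u$, so $u$ is forced from $T_u$ and its single outgoing force uses at most one $C$-edge; (iii) if $P \cup \{u\}$ is entirely white, then $u$ is the first vertex of $P \cup \{u\}$ to turn black, and since $P$ is pending, $u$ must spend its unique outgoing force on $P$, hence neither $C$-edge at $u$ is outgoing and at most one is incoming. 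In every case one $C$-edge at $u$ is unused, and deleting it gives $Z(G-e) \le Z(G)$. The moral: exploit the pending path to pin down where $u$'s single force goes, rather than trying to reorder the whole process on $C$.
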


\begin{proof}
By Lemma~\ref{rev}, it suffices to find a minimum zero forcing set for $G$ which does not use one of the two edges in $E(C)$ incident to $u$. Let $Z$ be a minimum zero forcing set of $G$. If $u\in Z$ then $u$ can force only one vertex and the result is proved since at most one edge incident to $u$ is used. We can assume $u \notin Z$. Let $P$ be an internal degree-two path between $u$ and a terminal vertex $l$ of $G$ which exists since $\ell_u \geq 1$. If there is a vertex $x$ in $P \cap Z$, then $(Z\setminus \{x\}) \cup \{l\}$ is still a minimum forcing set of $G$. Then, $l$ iteratively forces the vertices of $P$ until $u$ and we are back to the previous case. Finally, if $P$ and $u$ are initially white, then $u$ is the first vertex of $P \cup \{u\}$ which is turned black (eventually by one edge in $E(C)$). It then turns in black $P$. We cannot use the second edge of $E(C)$ since every vertex forces at most one vertex.
\end{proof}

\begin{lemme}
\label{cycle}
Let $G$ be an odd unicyclic graph.
Let $S\subseteq V(G)$ be such that for any $u$ on the cycle, $S$ is not a subset of $T_u$. Then, $S$ resolves $C$.
\end{lemme}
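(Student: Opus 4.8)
\textbf{Proof plan for Lemma~\ref{cycle}.}
The statement asserts that if $S \subseteq V(G)$ is not entirely contained in any single subtree $T_u$ hanging off the odd cycle $C$, then $S$ resolves every pair of vertices of $C$. The plan is to reduce this to Lemma~\ref{cycle1}, which says that in an odd unicyclic graph every \emph{pair of cycle vertices} already resolves $C$. So it suffices to produce, from $S$, two vertices $a, b \in C$ whose distances to $S$-vertices behave like distances from cycle vertices. More precisely, for each $s \in S$ let $u(s)$ be the unique vertex of $C$ lying on the shortest path from $s$ to any vertex of $C$ (equivalently, the root of the tree $T_{u(s)}$ containing $s$); then for every $x \in C$ we have $d(s,x) = d(s,u(s)) + d(u(s),x)$, since the only way to get from $T_{u(s)}$ to $x$ is through $u(s)$. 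Hence $s$ resolves a pair $(x,y)$ of cycle vertices if and only if $d(u(s),x) \neq d(u(s),y)$, i.e.\ if and only if the cycle vertex $u(s)$ resolves $(x,y)$.

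Now I would argue as follows. Let $(x,y)$ be any pair of distinct vertices of $C$. By Lemma~\ref{cycle1} the set of cycle vertices \emph{not} resolving $(x,y)$ has size exactly one — call it $w$, the midpoint of the even arc between $x$ and $y$. If some $s \in S$ has $u(s) \neq w$, then $s$ resolves $(x,y)$ and we are done. The only way this can fail is if $u(s) = w$ for \emph{every} $s \in S$; but $u(s) = w$ means exactly that $s \in T_w$, so this would force $S \subseteq T_w$, contradicting the hypothesis. Therefore some vertex of $S$ resolves $(x,y)$, and since $(x,y)$ was arbitrary, $S$ resolves $C$.

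I do not expect any genuine obstacle here; the content is entirely in the observation that distances from a vertex $s$ to cycle vertices factor through the root $u(s)$, which is immediate from $G$ being unicyclic (the components $T_u$ are pairwise disjoint and attached only at their roots). The one point to state carefully is the ``factoring'' identity $d(s,x) = d(s,u(s)) + d(u(s),x)$: any path from $s$ to $x \in C$ must leave $T_{u(s)}$, and the unique exit point is $u(s)$, so this holds. Everything else is a direct application of Lemma~\ref{cycle1}. If one wants to be fully explicit one can also note the hypothesis on $S$ is vacuous-proof in the sense that it is used exactly once, to exclude the single bad root $w$ for each pair.
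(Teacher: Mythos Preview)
Your proof is correct and follows essentially the same approach as the paper: both use the factoring identity $d(s,x)=d(s,u(s))+d(u(s),x)$ for $x\in C$ to reduce to Lemma~\ref{cycle1}. The only cosmetic difference is that the paper fixes at the outset two vertices $\alpha\in T_u$, $\beta\in T_v$ with $u\neq v$ (which exist by hypothesis) and applies the \emph{statement} of Lemma~\ref{cycle1} to the pair $\{u,v\}$, whereas you instead invoke the ``unique bad midpoint $w$'' observation from the \emph{proof} of Lemma~\ref{cycle1}; both routes are equally short and valid.
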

\begin{proof}
Let $\alpha$ and $\beta$ in $S$ such that $\alpha \in T_u$ and $\beta \in T_v$ with $u \neq v$. Assume by contradiction that $x$ and $y$ in $C$ satisfies $d(x,\alpha)=d(y,\alpha)$ and $d(x,\beta)=d(y,\beta)$. Then, since $d(x, \alpha) = d(x,u) + d(u,\alpha)$ and $d(y,\alpha) = d(y,u) + d(u,\alpha)$, we have $d(x,u)=d(y,u)$. Similarly $d(x,v)=d(y,v)$, a contradiction with Lemma~\ref{cycle1}.
\end{proof}

\begin{lemme}
\label{tree}
Let $G$ be an odd unicyclic graph.
If, for any $u \in V(C)$, $T_u$ is a rooted tree, then for all $e \in E(C)$, $\dim(G) \leq \dim(G-e)$.
\end{lemme}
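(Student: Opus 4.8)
The plan is to fix an edge $e=v_0v_1\in E(C)$, set $T:=G-e$ (a tree), and show that a suitable metric basis of $T$ is already a resolving set of $G$, which gives $\dim(G)\le \dim(T)=\dim(G-e)$. Since $C$ has at least three vertices, some vertex of $C$ distinct from $v_0$ and $v_1$ keeps degree at least $3$ in $T$ (its neighbourhood is untouched by deleting $e$), so $T$ is not a path; hence, by Lemma~\ref{dimtree}, $T$ has a metric basis $S$ consisting of all but one of the terminal vertices of each major vertex of $T$, and $|S|=\dim(T)=\dim(G-e)$. It then suffices to prove that $S$ resolves $G$.

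The crucial structural step is to show that $S\cap T_u\neq\emptyset$ for every $u\in V(C)$; this is exactly where the hypothesis that each $T_u$ is a rooted tree (non-trivial, and not a path with $u$ at an end) is used. If $T_u$ contains a vertex of degree at least $3$ inside $T_u$, pick such a vertex $m$ at maximum distance from $u$ in $T_u$; by maximality, the at least two branches of $m$ pointing away from $u$ are pendant paths of $T$, so $m$ is a major vertex of $T$ with $\ter(m)\ge 2$, all of whose terminal vertices lie in $T_u$, and $S$ therefore meets $T_u$. Otherwise $T_u$ is a path, and since it is not a rooted path, $u$ must be one of its interior vertices; then $u$ is itself a major vertex of $T$ whose two terminal vertices are the endpoints of $T_u$ (the cycle directions out of $u$ run into the vertices $v_i$ with $i\neq 0,1$, all of which keep degree at least $3$ in $T$), so again $S$ meets $T_u$. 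In particular $S$ meets each of the at least three components $T_{v_0},\dots,T_{v_{2k}}$.

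Next I would record the distance decomposition forced by $G$ being unicyclic: for $s,x$ in the same $T_u$ one has $d_G(s,x)=d_T(s,x)=d_{T_u}(s,x)$ (a detour around $C$ only lengthens a path), and for $s\in T_w$, $x\in T_u$ with $w\neq u$ one has $d_G(s,x)=d_G(s,u)+d_{T_u}(u,x)$ and similarly $d_T(s,x)=d_T(s,u)+d_{T_u}(u,x)$, because $u$ is the only vertex through which one can enter $T_u$. Now take $x\neq y$ with $x\in T_u$, $y\in T_{u'}$. If $u=u'$: take $s_0\in S$ resolving $(x,y)$ in $T$; by the decomposition $d_G(s_0,x)-d_G(s_0,y)=d_T(s_0,x)-d_T(s_0,y)\neq 0$ (when $s_0\notin T_u$ both distances are shifted by the same amount $d_G(s_0,u)-d_T(s_0,u)$), so $s_0$ resolves $(x,y)$ in $G$. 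If $u\neq u'$: write $a=d_{T_u}(u,x)$, $b=d_{T_{u'}}(u',y)$, assume w.l.o.g.\ $a\le b$, and pick $s\in S\cap T_u$, which exists by the previous step. Then $d_G(s,x)=d_{T_u}(s,x)\le d_{T_u}(s,u)+a$ by the triangle inequality in $T_u$, whereas the shortest path from $s$ to $y$ must leave $T_u$ through $u$, cross the cycle, and enter $T_{u'}$ through $u'$, so $d_G(s,y)=d_{T_u}(s,u)+d_G(u,u')+b$; hence $d_G(s,y)-d_G(s,x)\ge d_G(u,u')+(b-a)\ge 1$, and $s$ resolves $(x,y)$. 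In every case $S$ resolves the pair, so $S$ is a resolving set of $G$.

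The case $u\neq u'$ is the one I expect to look hardest at first sight: naively it seems to require controlling how the cycle metric interacts with the two subtrees, and this is presumably why the odd-cycle hypothesis and Lemmas~\ref{cycle1} and~\ref{cycle} appear in this part of the paper. The point of the plan above is that this difficulty disappears once one has secured that $S$ meets every $T_u$: a single vertex of $S$ sitting in the shallower of the two subtrees already separates $x$ from $y$ by a one-line triangle-inequality estimate, with no need to know which way around $C$ a shortest path runs. So the only genuinely delicate argument is the first step, namely $S\cap T_u\neq\emptyset$ for all $u\in V(C)$, which rests entirely on the rooted-tree hypothesis through the "deepest branch vertex" argument sketched above; the rest is bookkeeping with the unicyclic distance decomposition.
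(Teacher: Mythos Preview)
Your proof is correct and follows the same overall architecture as the paper's: take a metric basis $S$ of $T=G-e$, argue that $S\cap T_u\neq\emptyset$ for every $u\in V(C)$, and then run the same two-case distance analysis (same component versus different components) to show $S$ resolves $G$. The treatment of the case $u\neq u'$ via the inequality $d_G(s,y)-d_G(s,x)\ge d_G(u,u')+(b-a)\ge 1$ is exactly the paper's argument written contrapositively.

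The one genuine methodological difference is in the step $S\cap T_u\neq\emptyset$. The paper takes an \emph{arbitrary} metric basis and uses Lemma~\ref{dimbranch}: since $T_u$ is a rooted tree it contains a vertex $r$ of degree~$\ge 3$ in $G$, and removing $r$ from $G-e$ leaves at least two components inside $T_u$, so $S$ must meet $T_u$. You instead fix the \emph{canonical} basis of terminal vertices from Lemma~\ref{dimtree} and locate a major vertex of $T$ with $\ter\ge 2$ whose terminal vertices lie in $T_u$. Both work; the paper's route is shorter and, as a bonus, shows that \emph{every} metric basis of $G-e$ resolves $G$, not just the canonical one. One small imprecision on your side: the parenthetical ``the cycle directions out of $u$ run into the vertices $v_i$ with $i\neq 0,1$'' is not literally true when $u$ is adjacent to $v_0$ or $v_1$; in that case the path may pass through a degree-$2$ endpoint of $e$ before reaching a major vertex inside the neighbouring $T_{v_i}$. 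The conclusion still holds (because that neighbouring $T_{v_i}$ is itself a rooted tree and hence contains a major vertex), but the justification needs this extra sentence.
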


\begin{proof}
Let $e \in E(C)$ and $S$ be a metric basis of $G-e$. In order to show that $S$ is still a resolving set of $G$, let us first prove that for every $u \in C$ since $T_u$ is a rooted tree $S \cap T_u \neq \emptyset$.\\ 

By definition of rooted tree, $T_u$ contains a vertex of degree $3$ in $G$. Let $r$ be such a vertex.
    By Lemma \ref{dimbranch}, at most one connected component of $G-e\setminus \{r\}$ does not contain  element of $S$. The tree $T_u$ contains at least two connected components of $G-e \setminus \{r\}$,  so $S \cap T_u \neq \emptyset$.

    Let $(x,y)$ be any pair of vertices. We prove that $S$ resolves $(x,y)$ in $G$.
    \begin{enumerate}
    \item Assume first that $x$ and $y$ are in the same component $T_u$ for some $u \in C$. 
    Let $\alpha \in S$ that resolves $(x,y)$ in $G-e$.
    If $\alpha \in T_u$ then $d_{G}(\alpha,x) = d_{G-e}(\alpha,x) \neq d_{G-e}(\alpha,y) = d_{G}(\alpha,y)$.
    
    We can assume that $\alpha \notin T_u$. Since $e \notin T_u$ and $u$ is a cut-vertex of $G$ and $G-e$, we have $d_{G-e}(u,w)=d_{G}(u,w)$ for every $w \in T_u$. For every $w \in T_u$, \[d_{G}(\alpha,w)=d_{G}(\alpha,u)+d_{G}(u,w)=d_{G}(\alpha,u)+d_{G-e}(u,w).\]
    Since $d_{G-e}(\alpha,w)=d_{G-e}(\alpha,u)+d_{G-e}(u,w)$ and $d_{G-e}(\alpha,x) \ne d_{G-e}(\alpha,y)$, we have $d_{G-e}(u,x) \ne d_{G-e}(u,y)$. Thus, $d_{G}(\alpha,x) \ne d_{G}(\alpha,y)$ and then $\alpha$ resolves $(x,y)$ in $G$.

    \item Suppose now $x$ and $y$ are in different components, respectively $ T_u$ and $ T_v$. Then, there exist $\alpha \in S \cap T_u$ and $\beta \in S \cap T_v$. Assume $d_G(\alpha,x)=d_G(\alpha,y)$ and  $d_G(\beta,x)=d_G(\beta,y)$. Then, $d_G(\alpha,x) \leq d_G(\alpha,u)+d_G(u,x)$ and $d_G(\alpha,y)=d(\alpha,u)+d_G(u,v)+d_G(v,y)$ so $ d_G(u,v)+d_G(v,y) \leq d_G(u,x)$. The symmetric relation is $ d_G(v,u)+d_G(u,x)\leq d_G(v,y)$. Summing the two gives $2d_G(u,v) \leq 0$ which is a contradiction since $u \neq v$.\qedhere
\end{enumerate} 
\end{proof}

Next, we prove by case distinction the following result.

\begin{theorem}\label{odd_unicyclic}
Any odd unicyclic graph $G$ satisfies $\dim(G) \leq Z(G)$.
\end{theorem}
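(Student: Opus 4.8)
The plan is to prove Theorem~\ref{odd_unicyclic} by a case analysis on the structure of the trees $T_u$ hanging off the cycle $C$, mirroring the structure of the technical lemmas that precede it. Throughout, let $C$ be the unique cycle of $G$, of odd length $2k+1$, and recall that a metric basis of a tree exists with prescribed terminal vertices (Lemma~\ref{dimtree}), that an odd cycle is resolved by any pair of vertices lying in distinct trees $T_u$ (Lemmas~\ref{cycle1} and~\ref{cycle}), and that $\dim(H) \le Z(H)$ for trees $H$ (Lemma~\ref{treecomp}). The overall strategy: either we can delete an edge $e \in E(C)$ so that simultaneously $Z(G-e) \le Z(G)$ \emph{and} $\dim(G) \le \dim(G-e)$ (no $+1$ loss), and then we win immediately via $\dim(G) \le \dim(G-e) \le Z(G-e) \le Z(G)$; or the graph has a special shape for which we build a resolving set of size at most $Z(G)$ directly.

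First I would dispose of the easy sub-cases. If some $T_u$ is a rooted tree, Lemma~\ref{tree} gives $\dim(G) \le \dim(G-e)$ for \emph{every} $e \in E(C)$; but I still need $Z(G-e) \le Z(G)$ for some such $e$, which Lemma~\ref{zero} provides for \emph{some} edge of $C$, and combining gives the result. Wait — Lemma~\ref{tree} requires \emph{all} $T_u$ to be rooted trees, so more care is needed: I would instead argue that if there is any vertex $u \in C$ with $\ell_u \ge 1$ (i.e.\ $T_u$ nontrivial), then Lemma~\ref{deg1} furnishes an edge $e \in E(C)$ incident to $u$ with $Z(G-e) \le Z(G)$, and I must then check $\dim(G) \le \dim(G-e)$ for that specific $e$. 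This reduces the heart of the proof to the case where every nontrivial $T_u$ is a rooted path attached by its endpoint (so $u$ itself is either degree $2$ in $G$, or a major vertex whose only pendant path is $T_u$, giving terminal degree $1$), together with the fully degenerate case where \emph{every} $T_u$ is trivial, i.e.\ $G = C$ is just an odd cycle, for which $\dim(C_{2k+1}) = 2 = Z(C_{2k+1})$.

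The main obstacle is the remaining case: $G$ consists of the odd cycle $C$ with some pendant paths (possibly of length $0$) attached at single cycle vertices, each attachment point carrying at most one path. Here I would compute $Z(G)$ and $\dim(G)$ explicitly. For the zero forcing number: colour black one endpoint of each nontrivial pendant path and colour black two consecutive vertices of $C$ not used by pendant paths if possible — this propagates. For the metric dimension: by Lemma~\ref{lowerbound}, $\dim(G) \ge L(G) = \sigma(G) - ex(G)$, and since each major vertex on the cycle here has terminal degree exactly $1$ and there are no interior degree-2 issues off the cycle, $L(G)$ is small; I would exhibit a resolving set consisting of the terminal vertices of the pendant paths plus (if needed) one extra vertex on $C$ to handle the cycle, invoking Lemma~\ref{cycle} to ensure $C$ is resolved and the tree-structure arguments of Lemma~\ref{tree}'s proof to ensure the paths are resolved. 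Matching these two counts — showing the resolving set I build is no larger than the zero forcing set I build — is the delicate bookkeeping step, and it will split further according to how many pendant paths there are and their parities relative to $k$; I expect this to be where the bulk of the case work and the real difficulty lies, analogous to (but shorter than) the case analysis of Eroh et al.\ on trees.
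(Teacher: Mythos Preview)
Your plan has a genuine gap in the mixed cases (some $T_u$ rooted trees, some rooted paths). You propose to find an edge $e$ with $Z(G-e)\le Z(G)$ and then ``check $\dim(G)\le\dim(G-e)$ for that specific $e$'', but you offer no mechanism for this and none of the preceding lemmas provides it: Lemma~\ref{tree} requires \emph{every} $T_u$ to be a rooted tree, and Lemma~\ref{lemme:vardim} only gives $\dim(G)\le\dim(G-e)+1$. The paper does \emph{not} avoid the $+1$ in these cases. Instead it uses the equality characterisation in Lemma~\ref{treecomp}: if the tree $G-e$ contains an interior degree-$2$ vertex or a major vertex of terminal degree at most $1$, then $\dim(G-e)<Z(G-e)$ strictly, and the chain $\dim(G)\le\dim(G-e)+1\le Z(G-e)\le Z(G)$ closes. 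The work in Cases~4 and~5 of the paper is precisely to choose $e$ (via Lemma~\ref{zero} or~\ref{deg1}, and in one sub-case by an ad~hoc argument when $\ell_u,\ell_v\ge 2$) so that such a witness vertex $z$ exists in $G-e$. You never invoke this strict inequality, so your reduction does not go through. Note also that ``$\ell_u\ge 1$'' is not equivalent to ``$T_u$ nontrivial'': a rooted tree $T_u$ can have $\ell_u=0$ (the paper's Case~5 treats $\ell_u=0$ separately for exactly this reason).

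Separately, you misjudge where the difficulty lies. The ``all rooted paths'' case you call the main obstacle is the paper's Case~2, dispatched in a paragraph: two cycle vertices at distance $k$ form a resolving set, so $\dim(G)=2\le Z(G)$. No bookkeeping on parities or numbers of pendant paths is needed. The paper also needs a direct argument for the case of exactly one rooted tree (Case~3), where it shows $\dim(G)\le\dim(T_u)+1$ and $Z(G)\ge Z(T_u)+1$ by hand; your outline does not isolate this case.
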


\begin{proof}
We make a case analysis on the structure of $G$ with $C$ being the unique cycle of $G$. \\

\noindent
     \textbf{Case 1: For every $u \in V(C)$, $T_u$ is a rooted tree.} \\ 
    By Lemma~\ref{zero}, there is an edge $e$ such that  $Z(G-e) \leq Z(G)$. Since the $G-e$ is a tree, Corollary~\ref{treecomp} ensures that $\dim(G-e) \leq Z(G-e)$. By Lemma~\ref{tree}, $\dim(G) \leq \dim(G-e)$. The combination of these inequalities gives $\dim(G) \leq Z(G)$. \\
    
    \noindent
    \textbf{Case 2: For every $u \in V(C)$, $T_u$ is a rooted path.} \\ 
    We prove that any pair of vertices $\alpha$ and $\beta$ of $C$ at distance $k$ is a resolving set of $G$. \\
    Let ${\alpha,\beta}$ be such a pair of vertices. Let us first prove that for every $x \in G$, $d(x,\alpha)+d(x,\beta) \in \{k,k+1\}$ if and only if $x \in C$. Indeed as $d(\alpha,\beta)=k$ for any vertex $x$, $d(x,\alpha)+d(x,\beta) \geq k$. If $x \in C$, then either $x$ is on the path between $\alpha$ and $\beta$, and $d(x,\alpha)+d(x,\beta) =k$. Or $x$ is in the other part of the cycle and $d(x,\alpha)+d(x,\beta) = k+1$. If $x \notin C$, then let $y$ be the vertex of $C$ such that $x \in T_y$. Then, $d(x,\alpha)+d(x,\beta)=d(y,\alpha)+d(y,\beta)+2d(x,y) \geq 2 +d(y,\alpha)+d(y,\beta) \geq k+2$. This implies that a vertex on $C$ cannot have the same distance vector as a vertex in $V \setminus C$. \\
    By Lemma~\ref{cycle}, the set $\{\alpha,\beta\}$ resolves $C$. To conclude we  show that $\{ \alpha,\beta\}$ resolves $V \setminus C$.
    Let us prove that two vertices not in $C$ are resolved. Assume by contradiction that two vertices $x$ and $y$ are not resolved by $\{\alpha, \beta\}$, with $x \in T_u$ and $y \in T_v$ for $u$ and $v$ in $C$. If $u=v$, since $T_u$ is a rooted path, $\alpha$ resolves the pair $(x,y)$. From now on, we can assume that $u \neq v$. Assume by symmetry $d(x,u) \leq d(y,v)$, and let $z \in T_v$ be the vertex on $T_v$ such that $d(v,z)=d(x,u)$. Then, the pair $\{\alpha,\beta\}$ does not resolve the pair $(z,u)$. But $u \in C$ and the previous cases ensures that any pair of vertices with one vertex on $C$ is resolved by $\{\alpha,\beta\}$, a contradiction. So $\dim(G)=2$ and $G$ is not a path, so $Z(G) \geq 2$. \\ 
    
    \noindent
    \textbf{Case 3: There exists a unique $u \in V(C)$ such that $T_u$ is a rooted tree}.\\ 
    Let us prove that $\dim(G) \leq \dim(T_u)+1$ and $Z(G) \geq Z(T_u)+1$. Let $S$ be a metric basis for $T_u$ and $v \in C$ a vertex at distance $k$ from $u$.\\ 
    Let us show that $S \cup \{v\}$ is a resolving set of $G$. Let $\alpha \in S \cap T_u$, then $\{ \alpha,v\}$ resolves $G \setminus T_u$. Indeed otherwise, these two vertices would have the same distances to $u$ and $v$ which is impossible since in $G \setminus T_u$, for every $v \in C$, $T_v$ is a rooted path and by the claim in Case 2, $\{u,v\}$ is a resolving set for $G \setminus (T_u \setminus \{u\})$. 
    Two vertices on $T_u$ are resolved since $S$ is a metric basis for $T_u$ and $u$ is a cut-vertex.
    Let $x \in T_w$ for some $w \in C$, and $y \in T_u$. By triangular inequality, $d(\alpha,y) \leq d(y,u)+d(u,\alpha)$.
    If $d(\alpha,y)=d(\alpha,x)$, then $d(x,u) \leq d(y,u)$ as $d(\alpha,x)=d(\alpha,u)+d(u,x)$. If $d(x,v)=d(y,v)$, then as $d(y,v)=d(y,u)+d(u,v) \geq d(x,u)+d(u,v)$. We get $d(x,v) \geq d(x,u)+d(u,v)$. Removing $d(x,w)$ on both side gives $d(w,v) \geq d(w,u)+d(u,v)$ which is impossible since $d(u,v)=k$ and $d(w,v) \leq k$. So $x$ and $y$ have different codes and $\dim(G) \leq \dim(T_u)+1$. \\
    Let $Z$ be a minimal zero forcing set of $G$. If $Z$ contains $u$, then it should contain at least another vertex in $G \setminus T_u$. Since the restriction of $Z$ to $T_u$ is a forcing set for $T_u$, we have $Z(G) \geq Z(T_u)+1$. 
    So we can assume that $u \notin Z$. Consider a sequence of color change rule that turns $u$ into black. Either $u$ is forced by a vertex in $G \setminus T_u$. Since $(G \setminus T_u) \cup \{u\}$ contains a cycle, there are at least two vertices in $Z \cap (G \setminus T_u)$ and $Z \cap T_u  \cup \{u\}$ is a forcing set of $T_u$. So $Z(G) \geq Z(T_u)+1$. Otherwise $u$ is forced by a vertex of $T_u$. Then, there is at least one vertex in $Z \cap (G \setminus T_u)$ and $Z \cap T_u  $ is a forcing set of $T_u$ so $Z(G) \geq Z(T_u)+1$. So, $\dim(G) \leq \dim(T_u)+1 \leq Z(T_u)+1 \leq Z(G)$.\\
    
    For the remaining cases we use the following process: we exhibit an edge $e$ such that $Z(G-e) \leq Z(G)$ (by Lemma \ref{zero} or \ref{deg1}). Then, find a vertex $z$ in $G-e$ such that $z$ is an interior degree-2 vertex or a major vertex with terminal degree $0$ or $1$. By Lemma \ref{dimtree}, $\dim(G-e)< Z(G-e)$ and by Lemma \ref{lemme:vardim}, $\dim(G) \leq \dim(G-e)+1$, so $\dim(G) \leq Z(G)$. We just give the construction of $e$ and $z$. \\ 
    
    \noindent
    \textbf{Case 4: There exists $u,v,w,x \in C$ in this order (not necessarily adjacent) such that $T_u$ and $T_w$ are rooted trees and $T_v$ and $T_x$ are rooted paths.} \\
    Let $e \in E(C)$ such that $Z(G-e) \leq Z(G)$. Such an edge exists by Lemma \ref{zero}. In $G-e$, either $v$ or $x$ is on the path between $u$ and $w$. Let $z$ be this vertex, $z$ is an interior degree-2 vertex or a major vertex with terminal degree $0$ or $1$ in $G-e$.
    
    \item \textbf{Case 5: There exist $u$ and $v$ adjacent with $T_u$ and $T_v$ rooted trees and $w$ with $T_w$ a rooted path.} 
    \begin{itemize}
        \item \textbf{$l_u=0$.} 
        Let $e \in E(C)$ such that $Z(G-e) \leq Z(G)$. Such an edge exists by Lemma \ref{zero}. In $G-e$, $u$ is an interior degree-2 vertex or a major vertex with terminal degree $0$ or $1$ so $z=u$.
        \item \textbf{$l_u=1$.} 
        Let $e \in E(C)$ adjacent to $u$ such that $Z(G-e) \leq Z(G)$. Such an edge exists by Lemma \ref{deg1}. If $e=uv$ then $w$ is an interior degree-2 vertex or a major vertex with terminal degree $0$ or $1$ in $G-e$ so $z=w$. If $e$ in the other edge in $E(C)$ adjacent to $u$ then $u$ is an interior degree-2 vertex or a major vertex with terminal degree $0$ or $1$ in $G-e$ so $z=u$.
        \item \textbf{$l_u \geq 2$ and $l_v \geq 2$.} 
        Let $e=uv$, then $Z(G-e) \leq Z(G)$. Indeed a minimal zero forcing set of $G$ is also a zero forcing set of $G-e$. Let $Z$ be a zero forcing set of $G$. The set $Z$ contains at least one terminal vertex of $u$ and one terminal vertex of $v$. The terminal vertices can turn black $u$ and $v$ so there is a sequence of forces for $G$ such that the edge $e$ is not a forcing edge. So $Z(G-e) \leq Z(G)$ by Lemma $\ref{rev}$.
        Then $w$ is an interior degree-2 vertex or a major vertex with terminal degree $0$ or $1$ in $G-e$ so $z=w$. 
    \end{itemize}

\end{proof}

%\section{Perspectives}
%Theorem~\ref{main} suggests that the optimal size of feedback vertex set is pertinent when comparing the zero forcing number and the metric dimension. 
%Since the family of graphs of Figure~\ref{fig2} that is reaching the bound of Conjecture~\ref{conj} have feedback vertex sets of size $1$, a bound using only $\tau$ is not possible. However, it would be interesting to have a family of tight examples with $\tau$ unbounded or to have a bound of the form $\dim(G) \leq Z(G)+c+f(\tau)$.

\printbibliography
\newpage

\appendix

\section{Proof of Lemma~\ref{lemme:vardim}}\label{appendix}
The proof of this lemma in the paper of Eroh et al.~\cite{eroh2017comparison} contains a flaw. We present here a correction of the proof based on the same general ideas.
\lemmevardim*

\begin{figure}[ht]
    \centering
    \includegraphics[scale=0.7]{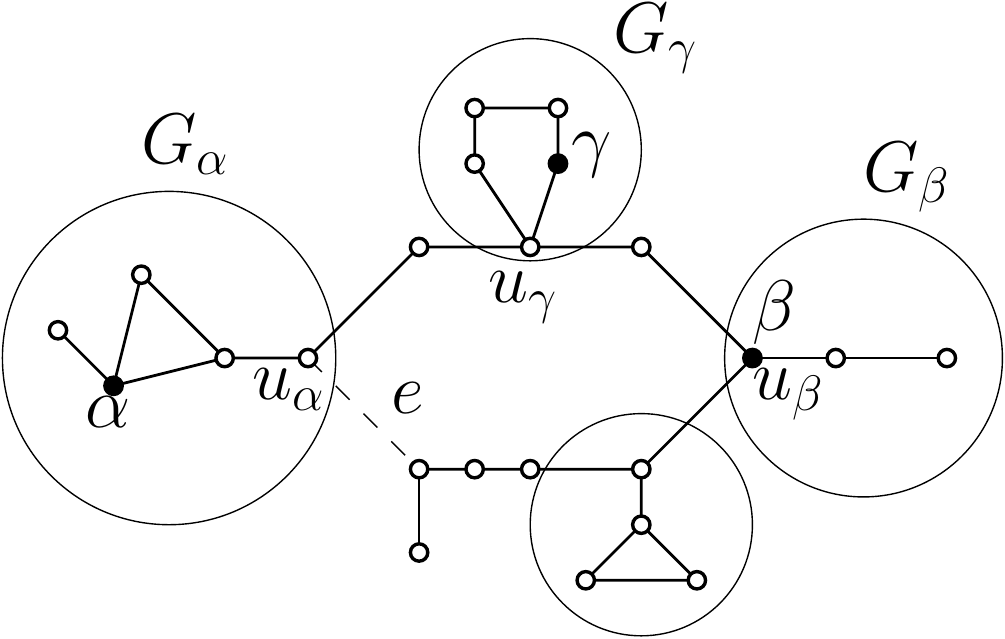}
    \caption{The set $\{\alpha,\beta,\gamma\}$ resolves any pair of vertices in two different circles}
    \label{fig4}
\end{figure}

\begin{proof}
Let $S$ be a metric basis of $G-e$ and $S_i=S \cap V_i$ for $1 \leq i \leq k$. Assume without loss of generality that $S_1 \neq \emptyset$. If $S_i=\emptyset$ for all $i\neq 1$, then $(G-e) \setminus G_1$ is a path and $S_1 \cup \{u_2\}$ is a resolving set of $G$. 
If there exists two non-empty subsets $S_i$ and $S_j$ such that $d_G(u_i,u_j)=\frac{k}{2}$, then let $\alpha_0 \in V(C) \setminus \{u_i,u_j\}$ and $S_0= \{\alpha_0,\alpha_i,\alpha_j\}$ where $\alpha_i \in S_i$ and $\alpha_j \in S_j$. Otherwise, let $i\neq 1$ be such that $S_i \neq \emptyset$, let $\alpha_0=u_{ \lfloor \frac{k}{2} \rfloor}$ and $S_0=\{ \alpha_0, \alpha_i,\alpha_1\}$ where $\alpha_1 \in S_1$ and $\alpha_i \in S_i$. Then, we prove that $S \cup \{ \alpha_0\}$ is a resolving set of $G$. \\
Let $v \in G_i$ and $w \in G_j$ with $i \neq j$ be two vertices in $G$. We show that one vertex in $S_0$ resolves the pair $(u,v)$. For simplicity, rename $S_0=\{ \alpha, \beta,\gamma\}$ with $\alpha \in G_{\alpha}$, $\beta \in G_{\beta}$ and $\gamma \in G_{\gamma}$ with $d(u_{\alpha},u_\beta)=\lfloor \frac{k}{2} \rfloor$ as in Figure \ref{fig4}.

Consider first the case where $u_v \notin \{u_\alpha,u_\beta,u_\gamma\}$ and $u_w \notin \{u_\alpha,u_\beta,u_\gamma\}$. Then, we have the following equalities:
\[d(v,\alpha)=d(w,\alpha) \text{ gives }d(v,u_v)+d(u_v,u_\alpha)+d(u_\alpha,\alpha)=d(w,u_w)+d(u_w,u_\alpha)+d(u_\alpha,\alpha);\]
\[d(v,\beta)=d(w,\beta) \text{ gives } d(v,u_v)+d(u_v,u_\beta)+d(u_\beta,\beta)=d(w,u_w)+d(u_w,u_\beta)+d(u_\beta,\beta);\]
\[d(v,\gamma)=d(w,\gamma) \text{ gives } d(v,u_v)+d(u_v,u_\gamma)+d(u_\gamma,\gamma)=d(w,u_w)+d(u_w,u_\gamma)+d(u_\gamma,\gamma).\]
Deleting the terms on the form $d(u_\alpha,\alpha)$ and equalizing we get 
\[d(v,u_v)-d(w,u_w)=d(u_w,u_\alpha)-d(u_v,u_\alpha)=d(u_w,u_\beta)-d(u_v,u_\beta)=d(u_w,u_\gamma)-d(u_v,u_\gamma).\]
If $d(v,u_v)-d(w,u_w)=0$, then, by Lemma~\ref{cycle3}, $u_v=u_w$ as $u_v$ and $u_w$ are at the same distance to three points on the cycle. Else, we have 
\[ d(u_\alpha,u_w)+d(u_w,u_\beta) - d(u_\alpha,u_v)+d(u_v,u_\beta) =2(d(v,u_v)-d(w,u_w)),\] which is a contradiction as $d(u_\alpha,u_\beta)=\lfloor \frac{k}{2} \rfloor$, the difference is in $\{-1,0,1\}$. So we get $u_v=u_w$. \\

Consider now the case of one vertex $u_v$ or $u_w$ is equal to $u_\alpha$ or $u_\beta$. Assume without loss of generality that $u_v=u_\alpha$ and $u_w \neq u_\alpha$. Then 
$d(v,\alpha) \leq d(v,u_v)+d(u_\alpha,\alpha)$ and as $d(w,\alpha)=d(w,u_w)+d(u_w,u_\alpha)+d(u_\alpha,\alpha)$, we get:
\[ d(w,u_w)+d(u_w,u_\alpha) \leq d(v,u_v). \]
Consider now the distances to $\beta$: $d(v,\beta)=d(v,u_v)+\lfloor \frac{k}{2} \rfloor+d(u_\beta,\beta)$ and $d(w,\beta) \leq d(w,u_w)+d(u_w,u_\beta)+d(u_\beta,\beta)$. As $d(v,\beta)=d(w,\beta)$:
\[d(v,u_v)+\lfloor \frac{k}{2} \rfloor \leq d(w,u_w)+d(u_w,u_\beta). \]
Thus, $d(v,u_v) \leq d(w,u_w)+d(u_w,u_\beta) -\lfloor \frac{k}{2} \rfloor $, and so 
\[  d(u_w,u_\alpha) \leq d(u_w,u_\beta) -\lfloor \frac{k}{2} \rfloor. \]
We assume $u_w \neq u_\alpha$ so $d(u_w,u_\alpha) \geq 1$ and $d(u_w,u_\beta) \leq \lfloor \frac{k}{2} \rfloor$ by definition of $k$. We get a contradiction as $d(u_w,u_\beta) \geq 0$. \\
The last case is $u_v = u_\gamma$ and $u_w \notin \{u_\alpha,u_\beta,u_\gamma\}$. We have 
$d(v,u_v)+d(u_v,u_\alpha)=d(w,u_w)+d(u_w,u_\alpha)$ and  $d(v,u_v)+d(u_v,u_\beta)=d(w,u_w)+d(u_w,u_\beta)$ and summing the two equalities gives:
\[ 2d (v,u_v)+d(u_v,u_\alpha)+d(u_v,u_\beta) = 2d (w,u_w)+d(u_w,u_\alpha)+d(u_w,u_\beta),\] which implies $d(v,u_v)=d(w,u_w)$.
Then, by triangular inequality $d(v,\gamma) \leq d(v,u_v)+d(u_\gamma,\gamma)$. Since $d(v,\gamma)=d(w,\gamma)$, we get $d(w,u_w)+d(u_w,u_\gamma) \leq d(v,u_v)$ and thus $d(u_w,u_\gamma)\leq 0$ which is a contradiction. \\
We proved that, if $v$ and $w$ are not in the same subgraph, then one vertex in $S_0$ resolves them. So, if $v$ and $w$ are in $G_i$ for some $i$, by definition of $S$, there exists $\mu \in S$ which resolve $(v,w)$ in $G-e$. Hence, $\mu$ still resolves $(v,w)$ in $G$: If $\mu \in G_i$, then the distances are the same in $G$ and in $G-e$. If $\mu \notin G_i$, then by hypothesis $d_{G-e}(v,\mu) \neq d_{G-e}(w,\mu)$. By decomposition 
$d_{G-e}(v,u_v) +d_{G-e}(u_v,\mu) \neq d_{G-e}(w,u_w) +d_{G-e}(u_w,\mu)$ so $d_{G-e}(v,u_v) \neq d_{G-e}(w,u_w)$. As $d_{G-e}(v,u_v)=d_G(v,u_v)$
and $d_{G-e}(w,u_w)=d_G(w,u_w)$, we get $d_{G}(v,u_v) +d_{G}(u_v,\mu) \neq d_{G}(w,u_w) +d_{G}(u_w,\mu)$. So $\mu$ resolves $(v,w)$.
Finally, $S \cup \{\alpha_0\}$ is a resolving set of $G$, and so $\dim(G) \leq \dim(G-e)+1$.

\end{proof}

\end{document}